\titleformat{\subsection}[runin]{\normalfont}{\thesubsection.}{0.5em}{\textbf}[]
\apptocmd{\thebibliography}{\setlength{\itemsep}{-3pt}}{}{}
\newcommand\blfootnote[1]{%
	\begingroup
	\renewcommand\thefootnote{}\footnote{#1}%
	\addtocounter{footnote}{-1}%
	\endgroup
}
\newtheorem{Lem}{Lemma}[section]
\newtheorem{Prop}[Lem]{Proposition}
\newtheorem{Cor}[Lem]{Corollary}
\newtheorem{Thm}[Lem]{Theorem}
\newtheorem{Def}[Lem]{Definition}
\newtheorem{Rem}[Lem]{Remark}
\newtheorem{Expl}[Lem]{Example}
\newenvironment{proof}[1][Proof]{\textrm{\em #1.} }{\hfill$\Box$\medskip\medskip}
\newcommand\Tor{\operatorname{Tor}}
\newcommand\Gin{\operatorname{Gin}}
\newcommand\Mon{{\operatorname{Mon}}}
\newcommand\supp{{\operatorname{supp}}}
\newcommand\lex{{\operatorname{lex}}}
\newcommand\pd{{\operatorname{pd}}}
\newcommand\reg{{\operatorname{reg}}}
\newcommand\Ker{\textup{Ker}}
\def\NZQ{\mathbb}
\def\NN{{\NZQ N}}
\def\ZZ{{\NZQ Z}}
\def\FF{{\NZQ F}}
\def\id{\textup{id}}
\let\emptyset\varnothing
\let\epsilon\varepsilon
\begin{document}
	
\title{\bf\normalsize\MakeUppercase{Vector-Spread Monomial Ideals and\\ Eliahou--Kervaire type resolutions}}	
\author{Antonino Ficarra}	
\newcommand{\Addresses}{{
 \footnotesize
 \textsc{Department of Mathematics and Computer Sciences, Physics and Earth Sciences, University of Messina, Viale Ferdinando Stagno d'Alcontres 31, 98166 Messina, Italy}
\begin{center}
 \textit{E-mail address}: \texttt{antficarra@unime.it}
\end{center}
}}
\date{}
\maketitle
\Addresses
\begin{abstract}
We introduce the class of vector-spread monomial ideals. This notion generalizes that of $t$-spread ideals introduced by Ene, Herzog and Qureshi. In particular, we focus on vector-spread strongly stable ideals, we compute their Koszul cycles and describe their minimal free resolution. As a consequence the graded Betti numbers and the Poincaré series are determined. Finally, we consider a generalization of algebraic shifting theory for such a class of ideals.
\blfootnote{
	\hspace{-0,3cm} \emph{Keywords:} monomial ideals, minimal graded resolution, vector-spread ideals, $t$-spread ideals, Koszul homology, Eliahou--Kervaire resolution.
	
	\emph{2020 Mathematics Subject Classification:} 05E40, 13B25, 13D02, 16W50, 68W30.
}
\end{abstract}
\section*{Introduction}
\textit{Algebraic shifting} is one of the most powerful techniques in Combinatorial Commutative Algebra \cite[Chapter 11]{JT}. It is based on the simple idea of shifting and spreading the variables of the generators of a monomial ideal in a coherent way. The origins of this theory date back to a famous paper by Erd\"os, Ko and Rado, \textit{Intersection theorems for systems of finite sets} \cite{EKR61}, and made its way into Commutative Algebra through the work of Gil Kalai \cite{KA01}. Lately, algebraic shifting theory and monomial ideals arising from \textit{shifting operators} \cite{JT} have seen a resurgence. The \textit{$t$-spread monomial ideals} have been introduced in 2019 by Ene, Herzog and Qureshi \cite{EHQ}. The homological and combinatorial properties of these and some related classes of ideals are the subject of a large body of research \cite{ATSpread21,AC2,AFC1,ACF3,AFC2,CAC2018,AEL,CAC,MC19,CF1,RD,RDHQ,EHQ,FC1,CF2,HMRZ021b,LR,NQKR,ZZC}.\smallskip

In this paper, our aim is to investigate the more general possible class of such ideals. Let $S=K[x_1,\dots,x_n]$ be a polynomial ring, with $K$ a field. Given a vector ${\bf t}=(t_1,t_2,\dots,t_{d-1})\in\ZZ_{\ge0}^{d-1}$, $d\ge 2$, of non negative integers, we say that a monomial $u=x_{j_1}x_{j_2}\cdots x_{j_\ell}\in S$, with $j_1\le j_2\le\dots\le j_\ell$ and $\ell\le d$, is a \textit{vector-spread monomial of type ${\bf t}$} or simply a \textit{${\bf t}$-spread monomial} if
$$
j_{i+1}-j_i\ge t_i, \ \ \text{for all}\ i=1,\ldots,\ell-1.
$$
For instance, $u=x_1^3x_2x_4$ is $(0,0,1,2)$-spread, but not $(1,0,1,2)$-spread. A monomial ideal $I\subseteq S$ is a \textit{${\bf t}$-spread monomial ideal} if it is generated by ${\bf t}$-spread monomials. If $t_i=t$, for all $i$, a ${\bf t}$-spread monomial is called an \textit{ordinary} or \textit{uniform} $t$-spread monomial, see \cite{EHQ}. A ${\bf 1}=(1,1,\dots,1)$-spread monomial ideal is in particular \textit{squarefree}.\smallskip

Let ${\bf t}\in\ZZ_{\ge0}^{d-1}$, $d\ge2$. We say that a ${\bf t}$-spread monomial ideal $I\subset S=K[x_1,\dots,x_n]$ is a \textit{${\bf t}$-spread strongly stable ideal} if for any ${\bf t}$-spread monomial $u\in I$, and all $j<i$ such that $x_i$ divides $u$ and $x_j(u/x_i)$ is ${\bf t}$-spread, then $x_j(u/x_i)\in I$. For ${\bf t}={\bf 0}=(0,0,\dots,0)$ (${\bf t}={\bf 1}=(1,1,\dots,1)$) one obtains the classical notion of strongly stable (squarefree strongly stable) ideals \cite{JT}. On the other hand, strongly stable ideals have a central role in Commutative Algebra. Indeed, for a field $K$ of characteristic zero, they appear as \textit{generic initial ideals} \cite{JT}. Eliahou and Kervaire constructed their minimal free resolutions \cite{EK}. Bigatti and Hulett showed that among all homogeneous ideals with the same Hilbert function, the lexicographic ideals (which are also strongly stable) have the biggest Betti numbers \cite{AMB,HHU}.  Using shifting theory \cite{EHQ,JT,KA01}, Aramova, Herzog and Hibi extended these results to squarefree ideals \cite{AHH2,AHH}.\smallskip

In \cite{EHQ}, it was shown that ordinary $t$-spread strongly stable ideals have \textit{linear quotients} \cite{JT,ET}. For ideals with linear quotients, the graded Betti numbers can be easily computed. Moreover, if the so--called \textit{decomposition function} of $I$ is \textit{regular} \cite{ET}, then the minimal free resolution can be explicitly described. Unfortunately, even ordinary 2-spread strongly stable ideals do not have \textit{regular decomposition functions}, as noted in \cite{EHQ}. Hence, the minimal free resolution of ordinary $t$-spread strongly stable ideals could not be determined and has remained elusive ever since.

The purpose of this paper is twofold, we construct the minimal free resolution of vector-spread strongly stable ideals generalizing the \textit{Eliahou--Kervaire resolution} \cite{EK}, and extend \textit{algebraic shifting theory} to vector-spread strongly stable ideals \cite{KA01}.\smallskip

The paper is organized as follows. After some preliminary materials in Section \ref{sec1}, we introduce in Section \ref{sec2} the concept of vector-spread monomials and ideals.

In Sections \ref{sec3} and \ref{sec4} we construct the minimal free resolution of $I$ a ${\bf t}$-spread strongly stable ideal of $S$. As pointed out before, the method of linear quotients is unavailable to us, as the decomposition functions of vector-spread strongly stable ideals are, in general, non regular. Thus we use \textit{Koszul homology} as developed by Aramova and Herzog in \cite{AH}. Let $H_i({\bf x};S/I)$ the $i$th homology module of ${\bf x}=x_1,\dots,x_n$ with respect to $S/I$. Due to the isomorphism $\Tor_i^S(K,S/I)_j\cong H_i({\bf x};S/I)_j$, one can calculate the graded Betti numbers of $S/I$ as $\beta_{i,j}(S/I)=\dim_KH_i({\bf x};S/I)_j$. Thus one has to determine a basis of this $K$-vector space. To do so we compute the \textit{Koszul cycles} of $S/I$. As many examples indicate, Koszul cycles of arbitrary ${\bf t}$-spread strongly stable ideals do not have a nice expression as in the cases ${\bf t}={\bf 0},{\bf 1}$ \cite{AH,AHH2} (Remark \ref{RemarkRestiKoszulCyclesVectSpread}). To compute them we introduce the following notion (Definition \ref{def:vectSupport}). If $u=x_{j_1}x_{j_2}\cdots x_{j_\ell}$ is a ${\bf t}$-spread monomial, the \textit{${\bf t}$-spread support} of $u$ is the set
$$
\supp_{\bf t}(u):=\textstyle\bigcup\limits_{i=1}^{\ell-1}\ \big[j_i,j_i+(t_i-1)\big],
$$
where $[a,b]=\{c:a\le c\le b\}$, for $a,b\in\ZZ_{\ge1}$. Furthermore, we set $[n]=\{1,2,\dots,n\}$, for $n\ge1$. Let $G(I)$ be the unique minimal set of monomial generators of $I$. For a monomial $u\in S$, $\max(u)=\max\{j:x_j\ \textup{divides}\ u\}$. The main result of Section \ref{sec3} is\medskip

\noindent\textbf{Theorem \ref{TeorTSpreadStronlgyStableBetti}.} \textit{Let $I\subset S$ be a ${\bf t}$-spread strongly stable ideal. Then, for all $i\ge1$, the $K$-vector space $H_i({\bf x};S/I)$ has as a basis the homology classes of the Koszul cycles}
	\begin{equation*}
	e(u;\sigma)\ \ \  \textit{such that}\ \ \ u\in G(I), \ \ \ \sigma\subseteq[\max(u)-1]\setminus\supp_{\bf t}(u), \ \ \ |\sigma|=i-1.
	\end{equation*}

Firstly, we show that the elements $e(u;\sigma)$ (Definition \ref{def:KoszulCyclesVectorSpread}) are Koszul cycles. We employ an inductive argument, as a direct proof is rather tedious (Remark \ref{Rem:cyclesVectSpread}). Then, we inductively determine the basis for the Koszul homologies of $S/I$ on partial sequences of ${\bf x}$. Note that for ${\bf t}={\bf 0}$ (${\bf t}={\bf 1}$) the conditions that $\sigma$ must satisfy in Theorem \ref{TeorTSpreadStronlgyStableBetti} are the same as in \cite[Proposition 2.1]{AH} (\cite[Proposition 2.2]{AHH2}). So, we get a formula for the graded Betti numbers (Corollary \ref{Cor:BettiNumbFormulaVectSpread}) independent from the characteristic of the underlying field $K$, generalizing the known results in \cite{AH,AHH2,EK,EHQ}.\smallskip

In Section \ref{sec4}, we introduce the \textit{${\bf t}$-spread decomposition function} (Definition \ref{Def:tSpreadDecompFunct}). As a consequence, the differentials of the minimal free resolution of $S/I$ are explicitly described (Theorem \ref{Teor:diffVectSpreadI}). Examples \ref{Ex:KoszVect1}, \ref{Ex:KoszVect2}, \ref{Ex:KoszVect3} illustrate our methods.\smallskip

Finally, Section \ref{sec5} is devoted to a generalization of the algebraic shifting theory. Classically, a \textit{simplicial complex} $\Delta$ on the vertex set $[n]$ is called \textit{shifted} if for all $F\in\Delta$, all $i\in F$, $j\in[n]$, $j>i$, then $(F\setminus\{i\})\cup\{j\}\in\Delta$ \cite{JT,KA01}. Note that $\Delta$ is shifted if and only if the Stanley--Reisner ideal of $\Delta$, $I_\Delta$, is an ordinary squarefree (1-spread) strongly stable ideal \cite{JT}. The usefulness of \textit{Combinatorial shifting} comes from the fact that a simplicial complex shares the same \textit{$f$-vector} of its shifted simplicial complex \cite{JT}, and moreover the $f$-vector of the shifted complex is easier to compute.

From the algebraic point of view, \textit{Algebraic shifting} is defined as follows. Let $K$ be a field of characteristic zero. Let $\Gin(I)$ the \textit{generic initial ideal} of $I\subset S$ with respect to the reverse lexicographic order \cite{JT}, in particular $\Gin(I)$ is (${\bf 0}$-spread) strongly stable. One defines $I^s=(\Gin(I))^\sigma$, where $\sigma$ is the \textit{squarefree operator} that assign to each monomial $u=x_{i_1}x_{i_2}\cdots x_{i_d}$ the monomial $\sigma(u):=x_{i_1}x_{i_2+1}\cdots x_{i_d+(d-1)}$ and to each monomial ideal $I$, the monomial ideal $I^\sigma$ with minimal generating set $G(I^\sigma)=\{\sigma(u):u\in G(I)\}$ \cite{JT,KA01}. Then, the following properties hold
\begin{enumerate}
	\item[]
	\begin{enumerate}
	\item[$(\textup{Shift}_1)$] $I^s$ is a squarefree strongly stable monomial ideal;
	\item[$(\textup{Shift}_2)$] $I^s=I$ if $I$ is a squarefree strongly stable ideal;
	\item[$(\textup{Shift}_3)$] $I$ and $I^s$ have the same Hilbert function;
	\item[$(\textup{Shift}_4)$] If $I\subseteq J$, then $I^s\subseteq J^s$.
	\end{enumerate}
\end{enumerate}
We are mainly interested in the algebraic side of this construction. We introduce an analogous ``${\bf t}$-spread" algebraic shifting by the assignment $I^{s,{\bf t}}=(\Gin(I))^{\sigma_{{\bf 0},{\bf t}}}$, where $\sigma_{{\bf 0},{\bf t}}$ will be a suitable shifting operator. The ${\bf t}$-spread versions of $(\textup{Shift}_1)$-$(\textup{Shift}_4)$ will be established. For ${\bf t}={\bf 1}=(1,1,\dots,1)$, our construction returns the classical one. In particular, $(\Gin(I))^{\sigma_{{\bf 0},{\bf t}}}=I$, if $I$ is a ${\bf t}$-spread strongly stable ideal (Theorem \ref{GinVectSpread}).

\section{Preliminaries}\label{sec1}

\subsection{Monomial Ideals.} For the rest of the paper, $S=K[x_1,\dots,x_n]$ is the standard graded polynomial ring in $n$ indeterminates, with $K$ a field. Let $u=x_1^{a_1}x_2^{a_2}\cdots x_n^{a_n}\in S$ be a monomial, the integer $\deg(u)=a_1+a_2+\dots+a_n$ is called the \textit{degree} of $u$.

By $\Mon(S)$ we denote the set of all monomials of $S$. Whereas, $\Mon_{\ell}(S)$ denotes the set of all monomial of $S$ having degree $\ell$. For a monomial $u\in S$, the \textit{support} of $u$ is the set $\supp(u):=\{i:x_i\ \text{divides}\ u\}$. Furthermore, we set $\max(u):=\max\supp(u)$ and $\min(u):=\min\supp(u)$. For convenience we let $\min(1)=\max(1)=n$, for the monomial $1\in S$. For $I$ a monomial ideal of $S$, $G(I)$ denotes the unique minimal set of monomial generators of $I$, whereas $G(I)_\ell:=\big\{u\in G(I):\deg(u)=\ell\big\}$.

Any monomial ideal $I$ of $S$ has a unique minimal graded free resolution
$$
\FF: 0\rightarrow \bigoplus_{j\ge0}S(-j)^{\beta_{r,j}}\xrightarrow{\ d_r\ } \cdots\xrightarrow{\ d_1\ }\bigoplus_{j\ge0}S(-j)^{\beta_{0,j}}\xrightarrow{\ d_0\ }I\rightarrow0,
$$
where $S(-j)$ is the free $S$-module obtained by shifting the degrees of $S$ by $j$.
For all $i,j\ge0$, the numbers $\beta_{i,j}=\beta_{i,j}(I) = \dim_K\Tor^S_i(K,I)_j$ are called the \textit{graded Betti numbers} of $I$, and $\beta_i(I)=\sum_j\beta_{i,j}(I)$ is the $i$th \textit{total Betti number} of $I$. Furthermore, $\pd(I)=\max\{i:\beta_i(I)\ne0\}$ is the \textit{projective dimension} of $I$. Whereas, the (\textit{Castelnuovo--Mumford}) \textit{regularity} of $I$ is $\reg(I)=\max\{j:\beta_{i,i+j}(I)\ne0,\ \textup{for some}\ i\}$.

\subsection{The Koszul complex.} To compute the graded Betti numbers one can use the \textit{Koszul complex} \cite{JT}. Let ${\bf f}=f_1,\dots,f_m$ be a sequence of elements of $S$. The Koszul complex $K_{_{\text{\large$\boldsymbol{\cdot}$}}}({\bf f};S)$ attached to the sequence ${\bf f}$ is defined as follows: let $F$ be the free $S$-module with basis $e_1,\dots,e_m$. 
\begin{enumerate}
	\item[-] We let $K_i({\bf f};S)={\bigwedge}^iF$, for all $i=0,\dots,m$. A basis of the free $S$-module $K_i({\bf f};S)$ is given by the wedge products $e_\tau=e_{k_1}\wedge e_{k_2}\wedge\dots\wedge e_{k_i}$, where $\tau=\{k_1<k_2<\dots<k_i\}\subseteq[m]=\{1,\dots,m\}$, with $\deg(u_\tau)=|\tau|=i$.
	\item[-] We define the differential $\partial_i:K_i({\bf f};S)\rightarrow K_{i-1}({\bf f};S)$, $i=1,\dots,m-1$ by 
	$$
	\partial_i(e_\tau)\ =\ \sum_{\ell=1}^i(-1)^{\ell+1}f_{k_\ell}e_{\tau\setminus\{k_\ell\}}.
	$$
\end{enumerate}

We order the wedge products \textit{lexicographically}, as follows:\\ Let $\sigma=\{k_1<k_2<\dots<k_p\},\tau=\{\ell_1<\ell_2<\dots<\ell_q\}\subseteq[m]$, we define $\sigma>\tau$ if $p=q$ and for some $j\in[p]=\{1,\dots,p\}$ one has
$$
k_1=\ell_1,\ \ k_2=\ell_2,\ \ \ldots,\ \ k_{j-1}=\ell_{j-1}\ \ \text{and}\ \ k_j<\ell_j.
$$
If $\sigma>\tau$, then we set $e_\sigma>e_\tau$. For example,
$$
e_1\wedge e_2\ >\ e_1\wedge e_3\ >\ e_1\wedge e_4\ >\ e_2\wedge e_3\ >\ e_2\wedge e_4\ >\ e_3\wedge e_4.
$$
\subsection{Koszul Homology.} Let $I$ be a monomial ideal of $S$, and let $\varepsilon:S\rightarrow S/I$ be the canonical epimorphism. For all $1\le j\le n$, we let ${\bf x}_j$ be the regular sequence ${\bf x}_j=x_j,x_{j+1},\dots,x_n$. In particular, ${\bf x}_1={\bf x}=x_1,\dots,x_n$. One can define the complex $K_{_{\text{\large$\boldsymbol{\cdot}$}}}({\bf x}_j;S/I) = K_{_{\text{\large$\boldsymbol{\cdot}$}}}({\bf x}_j;S)\otimes S/I$. We set $\varepsilon(f)e_\sigma>\varepsilon(g)e_\tau$ if $e_{\sigma}>e_{\tau}$. We denote by $H_i({\bf x}_j;S/I)= H_i(K_{_{\text{\large$\boldsymbol{\cdot}$}}}({\bf x}_j;S/I))$ the $i$th homology module of ${\bf x}_j$ with respect to $S/I$, such modules are graded. If $a\in K_i({\bf x}_j;S/I)$ is a \textit{Koszul cycle}, \emph{i.e.}, $\partial_i(a)=0$, the symbol $[a]$ denotes the \textit{homology class} of $a$ in $H_i({\bf x}_j;S/I)$.\smallskip

Koszul homology allows us to calculate the graded Betti numbers. Indeed, we have the isomorphism $H_i({\bf x};S/I)_j\cong\Tor^S_i(K,S/I)_j$, see \cite[Corollary A.3.5]{JT}. Thus
\begin{equation}\label{eq:betaIS/I}
\beta_{i-1,j}(I)=\beta_{i,j}(S/I)=\dim_K H_{i}({\bf x};S/I)_j\ \ \textup{for all}\ \ i\ge1,j\ge0.
\end{equation}

We recall the following rule of multiplication: $\partial(a\wedge b)=\partial(a)\wedge b+(-1)^{\deg a}a\wedge\partial(b)$ for $a,b\in K_{_{\text{\large$\boldsymbol{\cdot}$}}}({\bf x};S/I)$, with $a$ homogeneous.

To simplify the notations, if there is no risk of confusion, we set $K_i({\bf x}_{j})=K_i({\bf x}_j;S/I)$ and $H_i({\bf x}_{j})=H_i({\bf x}_j;S/I)$, for all $i$ and all $j=1,\dots,n$. Each module $H_i({\bf x}_j)$ is a $S=K[x_1,\dots,x_n]$-module, so it is in particular a $S/({\bf x}_j)\cong K[x_1,\dots,x_{j-1}]$-module, and for $j=1$, a $K$-vector space.\smallskip

Let $1\le j\le n-1$. For all $i$, let $\alpha_i:K_{i}({\bf x}_{j+1})\rightarrow K_{i}({\bf x}_j)$ be the inclusion. We define also a homomorphism $\beta_i:K_{i}({\bf x}_j)\rightarrow K_{i-1}({\bf x}_{j+1})$ as follows:  Any element $a\in K_i({\bf x}_j)$ can be written uniquely as $a=e_j\wedge b+c$, for unique $b\in K_{i-1}({\bf x}_{j+1})$ and $c\in K_i({\bf x}_{j+1})$, we set $\beta_i(a)=b$. One immediately verifies that $\beta_i\circ\alpha_i=0$.

We can construct the following short exact sequence of complexes
\begin{equation}\label{shortexactsequenceVectKi}
0\rightarrow K_{_{\text{\large$\boldsymbol{\cdot}$}}}({\bf x}_{j+1};S/I) \xrightarrow{\ \alpha\ } K_{_{\text{\large$\boldsymbol{\cdot}$}}}({\bf x}_j;S/I) \xrightarrow{\ \beta\ } K_{_{\text{\large$\boldsymbol{\cdot}$}}}({\bf x}_{j+1};S/I)[-1]  \rightarrow 0,
\end{equation}
where $[-1]$ denotes the shifting of the homological degree by $-1$; indeed the following diagram is commutative with exact rows
\begin{displaymath}
\xymatrix{
	&   \vdots \ar[d]_{\partial_{i+2}} & \vdots \ar[d] \ar[d]_{\partial_{i+2}}  & \vdots \ar[d] \ar[d]_{\partial_{i+1}} & \\
	0 \ar[r] & K_{i+1}({\bf x}_{j+1}) \ar[d]_{\partial_{i+1}} \ar[r]^{\alpha_{i+1}} & K_{i+1}({\bf x}_j) \ar[r]^{\beta_{i+1}} \ar[d]_{\partial_{i+1}} & K_{i}({\bf x}_{j+1}) \ar[d]_{\partial_{i}} \ar[r] & 0 \\
	0 \ar[r] & K_{i}({\bf x}_{j+1}) \ar[d]_{\partial_{i}} \ar[r]^{\alpha_i} & K_{i}({\bf x}_j) \ar[r]^{\beta_i} \ar[d]_{\partial_{i}} & K_{i-1}({\bf x}_{j+1}) \ar[d]_{\partial_{i-1}} \ar[r] & 0 \\
	0 \ar[r] & K_{i-1}({\bf x}_{j+1}) \ar[d]_{\partial_{i-1}} \ar[r]^{\alpha_{i-1}} & K_{i-1}({\bf x}_j) \ar[r]^{\beta_{i-1}} \ar[d]_{\partial_{i-1}} & K_{i-2}({\bf x}_{j+1}) \ar[d]_{\partial_{i-2}} \ar[r] & 0 \\
	&   \vdots                       & \vdots                         & \vdots                        & \\
}\end{displaymath}

Therefore, we can apply the homology functor $H$ to the short exact sequence of complexes (\ref{shortexactsequenceVectKi}). By abuse of notations, we denote the induced maps $H_i(\alpha),H_i(\beta)$ again by $\alpha_i,\beta_i$, for all $i$. So, we have the long exact sequence
\begin{equation}\label{longexactsequenceHiVectorspread}
\begin{aligned}
\xymatrix@R-1.5pc{
	\cdots\ \ar[r]^{\delta_{i+1}\ \ \ \ \ \ } & H_{i+1}({\bf x}_{j+1}) \ar[r]^{\ \alpha_{i+1}} & H_{i+1}({\bf x}_j) \ar[r]^{\beta_{i+1}} & H_{i}({\bf x}_{j+1}) \ar[r]^{\ \ \ \ \ \ \delta_{i}} &\ \\
	\ar[r]^{\delta_{i}\ \ \ \ \ \ } & H_{i}({\bf x}_{j+1}) \ar[r]^{\ \ \alpha_i} & H_{i}({\bf x}_j) \ar[r]^{\beta_i\ } & H_{i-1}({\bf x}_{j+1}) \ar[r]^{\ \ \ \ \ \ \delta_{i-1}} &\ \\
	\ar[r]^{\delta_{i-1}\ \ \ \ \ \ } & H_{i-1}({\bf x}_{j+1}) \ar[r]^{\ \alpha_{i-1}} & \ \ \ \cdots\ \ \ \ar[r]^{\alpha_{0}\ } & H_{0}({\bf x}_{j}) \ar[r] & 0,
}
\end{aligned}
\end{equation}
where the maps $\delta_i$ are the \textit{connecting homomorphisms}. It can be verified that $\delta_i$ is \textit{multiplication by} $\pm x_j$, \emph{i.e.}, $\delta_i([a])=\pm x_j[a]$, see \cite[Theorem A.3.3]{JT}.\bigskip

\section{Basic concepts}\label{sec2}

From now on, ${\bf t}=(t_1,\dots,t_{d-1})$ is a vector of non negative integers and $d\ge2$.

\begin{Def}
	\rm Let $u=x_{j_1}x_{j_2}\cdots x_{j_\ell}\in S$ be a monomial of degree $\ell\le d$, with $1\le j_1\le j_2\le\dots\le j_\ell\le n$. We say that $u$ is a \textit{vector-spread monomial of type} ${\bf t}$, or simply a \emph{${\bf t}$-spread monomial}, if
	$$
	j_{i+1}-j_i\ge t_i, \ \ \text{for all}\ i=1,\ldots,\ell-1.
	$$
	
	In particular, any variable $x_j$ is ${\bf t}$-spread. We assume that $u=1$ is ${\bf t}$-spread too.
	Whereas, we say that a monomial ideal $I\subseteq S$ is a \textit{vector-spread monomial ideal of type} ${\bf t}$, or simply a \emph{${\bf t}$-spread monomial ideal}, if all monomials $u\in G(I)$ are ${\bf t}$-spread.
\end{Def}

For instance, the monomial $u=x_1^3x_2x_4x_5$ is $(0,0,1,2,1)$-spread, but it is not $(1,0,1,2,1)$-spread.
For a ${\bf t}$-spread monomial ideal $I\subset S$, it is $G(I)_k=\emptyset$ for $k>d$. Let ${\bf 0}=(0,0,\dots,0)$ be the null vector with $d-1$ components. All monomials of degree $\ell\le d$ are ${\bf 0}$-spread. If $t_i\ge1$ for all $i$, a ${\bf t}$-spread monomial is \textit{squarefree} \cite{JT}.

\begin{Def}\label{def:vectunif}
	\rm In our context, if $t_i=t\ge0$, for all $i=1,\dots,d-1$, we say that a ${\bf t}$-spread monomial (ideal) $u\in \Mon(S)$, ($I\subseteq S$), is an \textit{uniform} or \textit{ordinary ${\bf t}$-spread} monomial (ideal). Such definition agrees with that given in \cite{EHQ}. In this case, we drop the bold character ``${\bf t}$" and we simply speak of $t$-spread monomial ideals.
\end{Def}

Let $T=K[x_1,x_2,\dots,x_n,\dots]$ be the polynomial ring in infinitely many variables. We let $\Mon(T;{\bf t})$ to be the set of all ${\bf t}$-spread monomials of $T$. Analogously, $\Mon(S;{\bf t})$ denotes the set of all ${\bf t}$-spread monomials of $S$. Furthermore, for all $0\le\ell\le d$, we define the following sets
\begin{align*}
\Mon_{\ell}(T;{\bf t})&:=\big\{u\in\Mon(T;{\bf t}):\deg(u)=\ell\big\},\\
\Mon_{\ell}(S;{\bf t})&:=\big\{u\in\Mon(S;{\bf t}):\deg(u)=\ell\big\}.
\end{align*}
Note that $\Mon_{\ell}(S;{\bf t})=\Mon_{\ell}(T;{\bf t})\cap S$, $\Mon_{\ell}(S;{\bf t})=\emptyset$ for $\ell>d$, and $\Mon(S;{\bf t})$ is the disjoint union of the sets $\Mon_{\ell}(S;{\bf t})$, $\ell=0,\dots,d$.\smallskip

Sometimes, we may use the abbreviation $M_{n,\ell,{\bf t}}$ for $\Mon_{\ell}(S;{\bf t})$. For instance,
$$
M_{5,4,(1,0,2)}=\big\{x_1x_2^2x_4,x_1x_2^2x_5,x_1x_2x_3x_5,x_1x_3^2x_5,x_2x_3^2x_5\big\}.
$$

In order to compute the cardinality of the sets $M_{n,\ell,{\bf t}}$, we introduce a new \textit{shifting operator}, see \cite{AHH}. Let ${\bf 0}=(0,0,,\dots,0)$ be the null vector with $d-1$ components, we define the map $\sigma_{{\bf 0},{\bf t}}:\Mon(T;{\bf 0})\rightarrow\Mon(T;{\bf t})$, by setting $\sigma_{{\bf 0},{\bf t}}(1)=1$, $\sigma_{{\bf 0},{\bf t}}(x_i)=x_i$ and for all monomials $u=x_{j_1}x_{j_2}\cdots x_{j_{\ell}}\in\Mon(T;{\bf 0})$ with $j_1\le j_2\le\dots\le j_\ell$, $2\le\ell\le d$,
\begin{align*}
\sigma_{{\bf 0},{\bf t}}(x_{j_1}x_{j_2}\cdots x_{j_{\ell}}):=\prod_{k=1}^{\ell}x_{j_k+\sum_{s=1}^{k-1}t_s}.
\end{align*}

Whereas, $\sigma_{{\bf t},{\bf t}}:\Mon(T;{\bf t})\rightarrow\Mon(T;{\bf t})$ denotes the identity function of $\Mon(T;{\bf t})$.

\begin{Lem}
	The map $\sigma_{{\bf 0},{\bf t}}$ is a bijection.
\end{Lem}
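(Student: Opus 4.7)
The plan is to describe the inverse of $\sigma_{{\bf 0},{\bf t}}$ explicitly, which handles both injectivity and surjectivity in one stroke. Set $a_k=\sum_{s=1}^{k-1}t_s$ for $k\ge 1$ (so $a_1=0$ and $a_{k+1}-a_k=t_k$). Then for $u=x_{j_1}x_{j_2}\cdots x_{j_\ell}$ with $j_1\le j_2\le\cdots\le j_\ell$, the definition reads $\sigma_{{\bf 0},{\bf t}}(u)=\prod_{k=1}^\ell x_{j_k+a_k}$. Because $a_k$ is non-decreasing in $k$ (the $t_s$ are non-negative), the indices $j_k+a_k$ are listed in non-decreasing order as written, so the $k$th factor of $\sigma_{{\bf 0},{\bf t}}(u)$ (in sorted order) is $x_{j_k+a_k}$.

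For injectivity, suppose $\sigma_{{\bf 0},{\bf t}}(u)=\sigma_{{\bf 0},{\bf t}}(v)$. Both sides are monomials of the same degree $\ell\le d$, say $v=x_{j'_1}\cdots x_{j'_\ell}$ with $j'_1\le\cdots\le j'_\ell$. Reading off the sorted indices of the common value forces $j_k+a_k=j'_k+a_k$, hence $j_k=j'_k$ for all $k$, so $u=v$.

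For surjectivity, let $w=x_{i_1}x_{i_2}\cdots x_{i_\ell}\in\Mon(T;{\bf t})$ with $i_1\le i_2\le\cdots\le i_\ell$ and $i_{k+1}-i_k\ge t_k$. Define $j_k=i_k-a_k$. Two checks are needed: that $j_k\ge 1$ and that $j_1\le j_2\le\cdots\le j_\ell$, so that $u:=x_{j_1}\cdots x_{j_\ell}\in\Mon(T;{\bf 0})$. Summing the ${\bf t}$-spread inequalities gives $i_k\ge i_1+\sum_{s=1}^{k-1}t_s=i_1+a_k$, whence $j_k=i_k-a_k\ge i_1\ge 1$. Moreover $j_{k+1}-j_k=(i_{k+1}-i_k)-t_k\ge 0$, so the $j_k$ form a non-decreasing sequence. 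By construction $\sigma_{{\bf 0},{\bf t}}(u)=w$, proving surjectivity.

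I expect no substantive obstacle here; the only subtlety is to justify that the sorted order of the output matches the order in which the product is written, which is exactly the observation that $a_k$ is non-decreasing. The rest is bookkeeping: the shifts $a_k$ depend only on the position $k$ in the sorted factorisation and on ${\bf t}$, so subtracting them off recovers the original monomial.
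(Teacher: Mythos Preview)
Your proof is correct and takes essentially the same approach as the paper: you construct the inverse map $w=x_{i_1}\cdots x_{i_\ell}\mapsto x_{i_1-a_1}\cdots x_{i_\ell-a_\ell}$, which is precisely the paper's $\sigma_{{\bf t},{\bf 0}}$, and verify it is well-defined and inverse to $\sigma_{{\bf 0},{\bf t}}$. The paper states the inverse formula and leaves the verifications to the reader, whereas you spell out the checks that $j_k\ge 1$ and $j_{k+1}\ge j_k$; otherwise the arguments coincide.
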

\begin{proof}
	We define the map $\sigma_{{\bf t},{\bf 0}}:\Mon(T;{\bf t})\rightarrow\Mon(T;{\bf 0})$, by setting $\sigma_{{\bf t},{\bf 0}}(1)=1$, $\sigma_{{\bf t},{\bf 0}}(x_i)=x_i,\ \text{for all}\ i\in\NN$, and for all monomials $u=x_{j_1}x_{j_2}\cdots x_{j_{\ell}}\in\Mon(T;{\bf t})$ with $1\le j_1\le j_2\le\dots\le j_\ell$, and $2\le\ell\le d$,
	\begin{align*}
	\sigma_{{\bf t},{\bf 0}}(x_{j_1}x_{j_2}\cdots x_{j_{\ell}})&:=\textstyle\prod\limits_{k=1}^{\ell}x_{j_k-\sum_{s=1}^{k-1}t_s}.
	\end{align*}
	One immediately verifies that $\sigma_{{\bf 0},{\bf t}}\circ\sigma_{{\bf t},{\bf 0}}=\sigma_{{\bf t},{\bf t}}$ and $\sigma_{{\bf t},{\bf 0}}\circ\sigma_{{\bf 0},{\bf t}}=\sigma_{{\bf 0},{\bf 0}}$.
\end{proof}

In particular, the restriction $\sigma_{{\bf t},{\bf 0}}|_{M_{n,\ell,{\bf t}}}$ is a injective map whose image is the set $M_{n-(t_1+t_2+\ldots+t_{\ell-1}),\ell,{\bf 0}}=\Mon_{\ell}(K[x_1,\dots,x_{n-(t_1+t_2+\ldots+t_{\ell-1})}])$. Thus,

\begin{Cor}\label{corCardMnellVect}
	For all $0\le\ell\le d$,
	$$
	|M_{n,\ell,{\bf t}}|=\binom{n+(\ell-1)-\sum_{j=1}^{\ell-1}t_j}{\ell}.
	$$
\end{Cor}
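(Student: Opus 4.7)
The plan is to exploit the bijection established immediately before the corollary. Set $m := n - \sum_{j=1}^{\ell-1}t_j$. The remark preceding the statement asserts that the restriction $\sigma_{{\bf t},{\bf 0}}|_{M_{n,\ell,{\bf t}}}$ is an injection with image $M_{m,\ell,{\bf 0}}$, hence a bijection
$$M_{n,\ell,{\bf t}}\ \longleftrightarrow\ M_{m,\ell,{\bf 0}}.$$
So the problem reduces to counting $|M_{m,\ell,{\bf 0}}|$, i.e.\ the number of degree-$\ell$ monomials in $m$ variables.

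First I would quickly double-check (or at least remind the reader of the verification of) the bijection in the specific direction that matters, namely surjectivity of $\sigma_{{\bf t},{\bf 0}}|_{M_{n,\ell,{\bf t}}}$. Given $u=x_{j_1}\cdots x_{j_\ell}\in M_{m,\ell,{\bf 0}}$, the preimage $v:=\sigma_{{\bf 0},{\bf t}}(u)=\prod_{k=1}^\ell x_{j_k+\sum_{s<k}t_s}$ is ${\bf t}$-spread because consecutive indices differ by $(j_{k+1}-j_k)+t_k\ge t_k$, and its largest variable has index $j_\ell+\sum_{s=1}^{\ell-1}t_s\le m+\sum_{s=1}^{\ell-1}t_s=n$, so $v\in M_{n,\ell,{\bf t}}$. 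This shows $\sigma_{{\bf t},{\bf 0}}$ carries $M_{n,\ell,{\bf t}}$ onto $M_{m,\ell,{\bf 0}}$, so $|M_{n,\ell,{\bf t}}|=|M_{m,\ell,{\bf 0}}|$.

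Finally, I apply the standard multiset (stars and bars) count: the set of degree-$\ell$ monomials in $m$ variables has cardinality $\binom{m+\ell-1}{\ell}$. Substituting $m=n-\sum_{j=1}^{\ell-1}t_j$ yields
$$|M_{n,\ell,{\bf t}}|\ =\ \binom{n-\sum_{j=1}^{\ell-1}t_j+\ell-1}{\ell}\ =\ \binom{n+(\ell-1)-\sum_{j=1}^{\ell-1}t_j}{\ell},$$
as claimed. There is no real obstacle here; the entire content is packaged in the preceding bijection, and the corollary is merely the combinatorial consequence. The only tiny detail worth flagging is the boundary case $\ell\in\{0,1\}$, where the empty sum $\sum_{j=1}^{\ell-1}t_j$ is understood to be $0$ and the formula correctly returns $|M_{n,0,{\bf t}}|=1$ and $|M_{n,1,{\bf t}}|=n$.
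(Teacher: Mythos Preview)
Your proof is correct and follows exactly the paper's approach: the paper simply notes that $\sigma_{{\bf t},{\bf 0}}|_{M_{n,\ell,{\bf t}}}$ is a bijection onto $\Mon_{\ell}(K[x_1,\dots,x_{n-\sum_{j=1}^{\ell-1}t_j}])$ and then states the corollary without further argument. If anything, you are more explicit than the paper, since you spell out the surjectivity check and the stars-and-bars count that the paper leaves implicit.
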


Now, we introduce three fundamental classes of ${\bf t}$-spread ideals.
\begin{Def}\rm
	Let $U$ be a non empty subset of $M_{n,\ell,{\bf t}}$, $\ell\le d$. We say that
	\begin{enumerate}
		\item[-] $U$ is a ${\bf t}$\textit{-spread stable set}, if for all $u\in U$, and all $j<\max(u)$ such that $x_j(u/x_{\max(u)})$ is ${\bf t}$-spread, then $x_j(u/x_{\max(u)})\in U$;
		\item[-] $U$ is a ${\bf t}$-\textit{spread strongly stable set}, if for all $u\in U$, and all $j<i$ such that $x_i$ divides $u$ and $x_j(u/x_{i})$ is ${\bf t}$-spread, then $x_j(u/x_{i})\in U$;
		\item[-] $U$ is a ${\bf t}$-\textit{spread lexicographic set}, if for all $u\in U$, $v\in M_{n,\ell,{\bf t}}$ such that $v\ge_{\lex}u$, then $v\in U$, where $\ge_{\lex}$ is the lexicographic order with $x_1>x_2>\dots>x_n$ \cite{JT}.
	\end{enumerate}
	We assume the empty set $\emptyset$ to be a ${\bf t}$-spread stable, strongly stable and lexicographic set. Whereas, for $I$ a ${\bf t}$-spread ideal of $S$, we say that $I$ is a \textit{${\bf t}$-spread stable, strongly stable, lexicographic ideal}, if $U_\ell=I\cap M_{n,\ell,{\bf t}}$ is a ${\bf t}$-spread stable, strongly stable, lexicographic set, respectively, for all $\ell=0,\dots,d$.
\end{Def}

For ${\bf t}={\bf 0}=(0,0,\dots,0)$ we obtain the classical notions of stable, strongly stable and lexicographic sets and ideals \cite{JT}. For ${\bf t}={\bf 1}=(1,1,\dots,1)$, the \textit{squarefree} analogues \cite{AHH2}. Finally, if ${\bf t}=(t,t,\dots,t)$ we have the ordinary $t$-spread stable, strongly stable and lexicographic sets and ideals, as in \cite{EHQ}.\medskip

The following hierarchy of ${\bf t}$-spread monomial ideals of $S$ holds
\begin{center}
	${\bf t}$-spread lexicographic ideals $\implies$ ${\bf t}$-spread strongly stable ideals\\ $\implies$ ${\bf t}$-spread stable ideals.
\end{center}

The next lemma provides the existence of a \textit{standard decomposition} for all ${\bf t}$-spread monomials belonging to a ${\bf t}$-spread strongly stable ideal.
\begin{Lem}\label{LemG(I)M(I)}
	Let $I$ be a ${\bf t}$-spread strongly stable ideal of $S$, and $w\in I$ a ${\bf t}$-spread monomial. Then, there exist unique monomials $u\in G(I)$ and $v\in\Mon(S)$ such that $w=uv$ and $\max(u)\le\min(v)$.
\end{Lem}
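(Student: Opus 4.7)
The plan is to establish uniqueness first, then existence. Sort $w=x_{j_1}x_{j_2}\cdots x_{j_\ell}$ with $j_1\le\dots\le j_\ell$, and set $u_k:=x_{j_1}x_{j_2}\cdots x_{j_k}$ for $0\le k\le\ell$. Each $u_k$ is ${\bf t}$-spread since its $i$-th gap equals $j_{i+1}-j_i\ge t_i$, inherited from $w$.

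For \textbf{uniqueness}, assume $w=u_1v_1=u_2v_2$ with $k_i:=\deg(u_i)$. The condition $\max(u_i)\le\min(v_i)$ forces every variable index in $u_i$ to be $\le$ every variable index in $v_i$, so the variables of $u_i$ (with multiplicity) occupy exactly the first $k_i$ slots of the sorted factorization of $w$; that is, $u_i=u_{k_i}$. WLOG $k_1\le k_2$, giving $u_1\mid u_2$; since $G(I)$ is an antichain for divisibility we conclude $u_1=u_2$ and hence $v_1=v_2$.

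For \textbf{existence}, let $k$ be the smallest degree of any element of $G(I)$ dividing $w$ and fix such $u'\in G(I)$, written $u'=x_{j_{r_1}}\cdots x_{j_{r_k}}$ with $1\le r_1<\cdots<r_k\le\ell$. I would show $u:=u_k\in I$ by successively transforming $u'$ into $u$ via ${\bf t}$-spread strongly stable moves. If $u'\ne u$, let $p$ be the smallest index with $r_p>p$ and put $u'':=x_{j_p}(u'/x_{j_{r_p}})$; its sorted form is $x_{j_1}\cdots x_{j_p}x_{j_{r_{p+1}}}\cdots x_{j_{r_k}}$, a valid sort because $p<r_{p+1}$. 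The key verification is that $u''$ is ${\bf t}$-spread: the gaps before position $p$ coincide with consecutive gaps of $w$ (hence $\ge t_i$ for $i<p$), the gap at position $p$ satisfies $j_{r_{p+1}}-j_p\ge j_{r_{p+1}}-j_{r_p}\ge t_p$ by the ${\bf t}$-spread property of $u'$, and the gaps beyond position $p$ coincide with those of $u'$. Since $j_p\le j_{r_p}$, strong stability yields $u''\in I$ (the equality case $j_p=j_{r_p}$ is trivial, as $u''=u'$). The measure $\sum_s(r_s-s)$ strictly decreases at each step, so the process terminates at $u\in I$. By the minimality of $k$, no generator of degree $<k$ can divide $u$, whence $u\in G(I)$; setting $v=w/u=x_{j_{k+1}}\cdots x_{j_\ell}$ yields $\max(u)=j_k\le j_{k+1}=\min(v)$, with the convention $\min(1)=n$ handling the case $k=\ell$.

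\textbf{The main obstacle} is the ${\bf t}$-spread check on the intermediate monomials: arbitrary divisors of a ${\bf t}$-spread monomial need not themselves be ${\bf t}$-spread, because the bounds $t_i$ are attached to \emph{positions} in the sorted factorization rather than to the values of the variables. The choice to correct the leftmost misplaced position is precisely what aligns each new gap with a gap of either $w$ or $u'$ in the same positional slot, allowing the ${\bf t}$-spread inequalities to propagate.
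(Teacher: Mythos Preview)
Your proof is correct and follows essentially the same approach as the paper: pick a minimal-degree generator dividing $w$, then show the initial segment $x_{j_1}\cdots x_{j_k}$ of the sorted factorization of $w$ lies in $G(I)$ via strongly stable moves. Your version is in fact more careful than the paper's, which asserts in one line that $u_1=x_{j_1}\cdots x_{j_s}\in I$ ``as $u_1$ is ${\bf t}$-spread and $I$ is ${\bf t}$-spread strongly stable'' without constructing the chain of single-variable exchanges or verifying that the intermediates are ${\bf t}$-spread; your leftmost-correction scheme and the positional gap analysis fill exactly that gap.
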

\begin{proof}
	The statement holds when $w\in G(I)$. In such a case, $w=w\cdot1$, with $w\in G(I)$, $1\in\Mon(S)$ and $\max(w)\le n=\min(1)$. Otherwise, there exists a ${\bf t}$-spread monomial $u\in G(I)$ such that $u$ divides $w$ and $\deg(u)<\deg(w)$. We choose $u$ to be of minimal degree. Then $w=uv$, for a suitable monomial $v\in\Mon(S)$. Write $w=x_{j_1}x_{j_2}\cdots x_{j_\ell}$, then $u=x_{j_{k_1}}x_{j_{k_2}}\cdots x_{j_{k_s}}$ for $1\le k_1<k_2<\dots<k_s<\ell$. Now, $u_1=x_{j_1}x_{j_2}\cdots x_{j_s}\in I$, as $u_1$ is ${\bf t}$-spread and $I$ is ${\bf t}$-spread strongly stable. Moreover, $u_1$ divides $w$ and it is a minimal generator. Otherwise, if there exists $u_2\in G(I)$ such that $u_2$ divides $u_1$ and $\deg(u_2)<\deg(u_1)$, then $\deg(u_2)<\deg(u)$ and $u_2$ divides $w$, an absurd for the choice of $u$. Hence $w=u_1v_1$ with $u_1=x_{j_1}\cdots x_{j_s}\in G(I)$ and $v_1=x_{j_{s+1}}\cdots x_{j_\ell}\in\Mon(S)$. Clearly, the monomials $u_1$ and $v_1$ satisfying the statement are unique.
\end{proof}

As a consequence we have the following
\begin{Cor}\label{cor:G(I)VectSpreadSS}
	Let $I$ be a ${\bf t}$-spread monomial ideal of $S$. Then, the following conditions are equivalent:
	\begin{enumerate}
		\item[\textup{(i)}] $I$ is a ${\bf t}$-spread strongly stable ideal;
		\item[\textup{(ii)}] for all $u\in G(I)$, $i\in\supp(u)$, $j<i$ such that $x_j(u/x_i)$ is a ${\bf t}$-spread monomial, then $x_j(u/x_i)\in I$.
	\end{enumerate}
\end{Cor}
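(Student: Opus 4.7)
The implication (i) $\Rightarrow$ (ii) is immediate since $G(I)\subseteq I$ consists of ${\bf t}$-spread monomials. For the converse, note that one cannot invoke Lemma~\ref{LemG(I)M(I)} directly, as its proof explicitly uses the defining property of ${\bf t}$-spread strongly stable ideals. The plan is therefore to first establish the standard decomposition $w=uv$ (with $u\in G(I)$ and $\max(u)\le\min(v)$) for every ${\bf t}$-spread $w\in I$ using only hypothesis (ii), and then to deduce (i) by a short case analysis.

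To obtain the standard decomposition under (ii), write $w=x_{j_1}\cdots x_{j_\ell}$ with $j_1\le\cdots\le j_\ell$, and pick $u\in G(I)$ dividing $w$ of minimal degree $s$, say $u=x_{j_{p_1}}\cdots x_{j_{p_s}}$ with $p_1<\cdots<p_s$. The claim is that $u_1:=x_{j_1}\cdots x_{j_s}$ lies in $G(I)$, which immediately yields the decomposition $w=u_1\cdot (x_{j_{s+1}}\cdots x_{j_\ell})$. To prove the claim, define
\begin{equation*}
u^{(k)}\;:=\;x_{j_1}x_{j_2}\cdots x_{j_k}\,x_{j_{p_{k+1}}}x_{j_{p_{k+2}}}\cdots x_{j_{p_s}},\qquad k=0,1,\ldots,s,
\end{equation*}
so that $u^{(0)}=u$ and $u^{(s)}=u_1$. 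Each $u^{(k)}$ is ${\bf t}$-spread: the gaps at positions $<k$ come from $w$, those at positions $>k$ from $u$, and the joining gap at position $k$ satisfies $j_{p_{k+1}}-j_k\ge j_{p_{k+1}}-j_{p_k}\ge t_k$, using $p_k\ge k$ and the ${\bf t}$-spread property of $u$. Arguing by induction on $k$, each $u^{(k)}$ lies in $I$, has degree $s$, and therefore belongs to $G(I)$ by the minimality of $s$; then (ii) applied to $u^{(k)}\in G(I)$ with the move $x_{j_{p_{k+1}}}\mapsto x_{j_{k+1}}$ gives $u^{(k+1)}\in I$ (the move being trivial when $j_{p_{k+1}}=j_{k+1}$).

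Granting the standard decomposition, (i) follows at once. Let $w\in I$ be ${\bf t}$-spread, $i\in\supp(w)$, $j<i$, with $x_j(w/x_i)$ ${\bf t}$-spread, and write $w=uv$ with $u\in G(I)$ and $\max(u)\le\min(v)$. If $x_i\mid v$, then $x_j(w/x_i)=u\cdot (x_jv/x_i)$ is a multiple of $u\in I$. Otherwise $x_i\mid u$, and $u':=x_j(u/x_i)$ satisfies $\max(u')\le\max(u)\le\min(v)$; hence in the sorted form of $u'v$ the first $s$ variables are precisely those of $u'$, and the ${\bf t}$-spread property of $x_j(w/x_i)=u'v$ restricts to $u'$. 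Now (ii) applied to $u$ produces $u'\in I$, and multiplying by $v$ gives $x_j(w/x_i)\in I$. The main obstacle throughout is the first step: reconstructing Lemma~\ref{LemG(I)M(I)} via an explicit chain of ${\bf t}$-spread moves connecting an arbitrary minimal generator dividing $w$ to the ``leftmost'' candidate $u_1$, and showing that the minimality of $s$ keeps each intermediate $u^{(k)}$ in $G(I)$ so that (ii) can be invoked at every stage.
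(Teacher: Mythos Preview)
Your proof is correct and follows the same two-step strategy as the paper: first obtain the standard decomposition $w=uv$ with $u\in G(I)$ and $\max(u)\le\min(v)$, then split on whether $x_i$ divides $v$ or $u$. The paper simply invokes Lemma~\ref{LemG(I)M(I)} for the first step, but you rightly observe that this is circular as written, since the proof of that lemma uses the full strongly stable property rather than condition~(ii). Your explicit chain $u^{(0)},u^{(1)},\ldots,u^{(s)}$, together with the observation that minimality of $s$ forces each intermediate $u^{(k)}$ to lie in $G(I)$ (so that (ii) is applicable at every step), is precisely the extra ingredient needed to re-derive the decomposition from (ii) alone. The paper's argument for Lemma~\ref{LemG(I)M(I)} implicitly relies on the same chain of moves but does not justify it under the weaker hypothesis. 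So your argument is a strictly more careful version of the paper's.
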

\begin{proof}
	(i)$\implies$(ii) is obvious. For the converse, let $w\in I$ be a ${\bf t}$-spread monomial, $i\in\supp(w)$ and $j<i$ such that $w_1=x_j(w/x_i)$ is ${\bf t}$-spread, we need to prove that $w_1\in I$. Write $w=uv$, with $u$ and $v$ as in Lemma \ref{LemG(I)M(I)}. If $i\notin\supp(u)$, then $u$ divides $w_1$, and so $w_1\in I$. Otherwise, if $i\in\supp(u)$, then $j<i\le\max(u)$ and so $j\notin\supp(v)$. Thus, $w_1=x_j(w/x_i)=x_j(u/x_i)v=u_1v$ with $u_1=x_j(u/x_i)$, and $u_1$ is ${\bf t}$-spread, as $w_1$ is. By (ii), $u_1\in I$. Hence $u_1$ divides $w_1$ and so $w_1\in I$.
\end{proof}

\section{Koszul cycles of vector-spread strongly stable ideals}\label{sec3}

The main computational tool in this paper is Theorem \ref{TeorTSpreadStronlgyStableBetti}. It allows to calculate a basis of the homology modules of the Koszul complex $K_{_{\text{\large$\boldsymbol{\cdot}$}}}({\bf x};S/I)$, where ${\bf x}=x_1,\dots,x_n$ and $I$ is a ${\bf t}$-spread strongly stable ideal of $S$.\medskip

The symbol $[n]$ denotes the set $\{1,2,\dots,n\}$, where $n\in\ZZ_{\ge1}$. If $j,k\ge1$ are integers, we set $[j,k]:=\{\ell\in\mathbb{N}:j\le\ell\le k\}$, and $[j,k]\ne\emptyset$ if and only if $j\le k$. If $j=k=0$, we set $[0,0]=[0]:=\emptyset$. For a monomial $u\in S$, $u\ne1$, we set $u'=u/x_{\max(u)}$.\medskip

The next combinatorial tool will be fundamental for our aim.
\begin{Def}\label{def:vectSupport}
	\rm Let $u=x_{j_1}x_{j_2}\cdots x_{j_{\ell}}\in\Mon(S;{\bf t})$ a ${\bf t}$-spread monomial of $S$, with $1\le j_1\le\dots\le j_{\ell}\le n$. The \textit{${\bf t}$-spread support} of $u$ is the following subset of $[n]$:
	$$
	\supp_{\bf t}(u):=\textstyle\bigcup\limits_{i=1}^{\ell-1}\ \big[j_i,j_i+(t_i-1)\big].
	$$
	Note that $\supp_{\bf 0}(u)=\emptyset$, and if $u$ is squarefree, $\supp_{\bf 1}(u)=\supp(u/x_{\max(u)})=\big\{j_1,j_2,\dots,j_{\ell-1}\big\}$, where ${\bf 1}=(1,1,\dots,1)\in\ZZ_{\ge0}^{d-1}$.
\end{Def}\smallskip

Let us explain now the \textit{combinatorial meaning} of the vector-spread support. Let $u=x_{j_1}x_{j_2}\cdots x_{j_{\ell}}\in\Mon(S;{\bf t})$, $u\ne1$. For any $k\in[\max(u)-1]\setminus\supp_{\bf t}(u)$, we define $k^{(u)}:=\min\{j\in\supp(u):j>k\}$. We note that $k^{(u)}$ always exists as $k<\max(u)$ and $\max(u)\in\supp(u)$. An easy calculation shows that $w=x_k(u/x_{k^{(u)}})$ \textit{is again a ${\bf t}$-spread monomial}, and moreover, if $I$ is a ${\bf t}$-spread strongly stable ideal and $u\in I$, then $w\in I$ also, by definition. This property will be crucial in order to construct our Koszul cycles. For instance,

\begin{Expl}
	\rm Let $u=x_1^2x_2x_4x_6x_8\in \Mon(S;(0,0,1,2,1))$, $S=K[x_1,\dots,x_8]$. We have $\supp_{(0,0,1,2,1)}(x_1x_1x_2x_4x_6x_8)=\{2,4,5,6\}$, and $[\max(u)-1]\setminus\supp_{\bf t}(u)=\{1,3,7\}$.
	For $k=1$, $k^{(u)}=2$, for $k=3$, $k^{(u)}=4$ and for $k=7$, $k^{(u)}=\max(u)=8$. Note that $x_1(u/x_2)=x_1^3x_4x_6x_8$, $x_3(u/x_4)=x_1^2x_2x_3x_6x_8$ and $x_7(u/x_8)=x_1^2x_2x_4x_6x_7$ are all $(0,0,1,2,1)$-spread monomials.
\end{Expl}\smallskip

Let $I$ be a ${\bf t}$-spread strongly stable ideal of $S$. We are going to construct suitable cycles of $K_i({\bf x};S/I)=K_i({\bf x})$.

We shall make the following conventions. For a non empty subset $A\subseteq [n]$, we set ${\bf x}_A=\prod_{i\in A}x_i$ and $e_A=\bigwedge_{i\in A}e_i$, whereas for $A=\emptyset$, ${\bf x}_{\emptyset}=1$ and $e_\tau\wedge e_{\emptyset}=e_{\emptyset}\wedge e_{\tau}=e_{\tau}$ for any non empty subset $\tau\subseteq[n]$. We take account of repetitions. For example, if $A=\{1,1,2,3\}$, then ${\bf x}_A=x_1^2x_2x_3$ and $e_A=e_1\wedge e_1\wedge e_2\wedge e_3=0$.\medskip

Let $u\in S$ be a ${\bf t}$-spread monomial and $\sigma=\{k_1<k_2<\dots<k_{i-1}\}\subseteq[\max(u)-1]$ with $|\sigma|=i-1$, $i\ge1$. For each $\ell=1,\dots,i-1$, we define
$$
k_\ell^{(u)}=j_\ell:=\min\big\{j\in\supp(u):j>k_\ell\big\}.
$$
Clearly, $j_1\le j_2\le\dots\le j_{i-1}\le \max(u)$. If $F=\{k_{s_1},\dots,k_{s_m}\}\subseteq\sigma$, we set
$$
F^{(u)}:=\{j_{s_1}\le j_{s_2}\le\dots\le j_{s_m}\}=\big\{\min\{j\in\supp(u):j>k_{s_\ell}\}:\ell=1,\dots,m\big\}.
$$

\begin{Def}\label{def:KoszulCyclesVectorSpread}
\rm	Let $u\in S$ be a ${\bf t}$-spread monomial. We set $u'=u/x_{\max(u)}$. Let $\sigma=\{k_1<k_2<\dots<k_{i-1}\}\subseteq[\max(u)-1]\setminus\supp_{\bf t}(u)$ with $|\sigma|=i-1$, $i\ge1$.
Let $I$ be any ${\bf t}$-spread strongly stable ideal of $S$ such that $u\in I$ and let $\varepsilon:S\rightarrow S/I$ be the canonical map. We define the following element of $K_i({\bf x};S/I)$:
\begin{align}\label{eq:KoszCyclesVectSpreadFormula}
e(u;\sigma)\ &:=\ \sum_{F\subseteq\sigma}(-1)^{u(\sigma;F)}\varepsilon({\bf x}_F(u'/{\bf x}_{F^{(u)}}))e_{\sigma\setminus F}\wedge e_{F^{(u)}}\wedge e_{\max(u)}\\
\nonumber&=\ \varepsilon(u')e_{\sigma}\wedge e_{\max(u)}+\sum_{\emptyset\ne F\subseteq\sigma}(-1)^{u(\sigma;F)}\varepsilon({\bf x}_F(u'/{\bf x}_{F^{(u)}}))e_{\sigma\setminus F}\wedge e_{F^{(u)}}\wedge e_{\max(u)},
\end{align}
with $u(\sigma;\emptyset):=0$ and for $F\ne\emptyset$, $F\subseteq\sigma$, $u(\sigma;F)$ is defined recursively as follows:
\begin{enumerate}
	\item[-] if $\max(\sigma)=k_{i-1}\notin F$, then $u(\sigma;F):=u(\sigma\setminus\{k_{i-1}\};F)+|F|$;
	\item[-] if $\max(\sigma)=k_{i-1}\in F$, then
\end{enumerate}
\begin{align*}
u(\sigma;F)\ := &\ u(\sigma\setminus\{k_r\in\sigma:j_r=j_{i-1}\};F\setminus\{k_r\in F:j_r=j_{i-1}\})\\
&\phantom{:}+(|\{k_r\in \sigma:j_r=j_{i-1}\}|-1)(|F|+1)+1.
\end{align*}
\end{Def}
The definition of the $u(\sigma;F)$'s will became clear in the proof of Proposition \ref{Lem:cyclesVectSpread}.\smallskip

The element $e(u;\sigma)\in K_i({\bf x})$ is well defined. Indeed, if for some $k_p<k_q$, $k_p,k_q\in\sigma$ we have $j_p=j_q=j$, then $x_j^2$ may not divide $u$, however, in such case the wedge product $e_{\sigma\setminus F}\wedge e_{F^{(u)}}\wedge e_{\max(u)}$ is zero, as $j_p,j_q\in F^{(u)}$ and $e_{j_p}\wedge e_{j_q}=e_j\wedge e_j=0$. The same reasoning applies if $j_p=\max(u)$, for some $p$. Moreover, $\varepsilon(u')e_{\sigma}\wedge e_{\max(u)}$ is the biggest summand of $e(u;\sigma)$ with respect to the order on the wedge products we have defined in Section \ref{sec1}.\medskip

As noted before, for $u\in I$ a ${\bf t}$-spread monomial and $k_\ell\in[\max(u)-1]\setminus\supp_{\bf t}(u)$, then $x_{k_\ell}(u/x_{j_\ell})\in I$, \emph{i.e.}, $\varepsilon(x_{k_\ell}(u/x_{j_\ell}))=0$, as $I$ is a ${\bf t}$-spread strongly stable ideal.

\begin{Prop}\label{Lem:cyclesVectSpread}
	Let $I\subseteq S$ be a ${\bf t}$-spread strongly stable ideal. For all $i\ge1$, the elements
	$$
	e(u;\sigma) \ \ \text{such that}\ \ u\in G(I), \ \ \sigma\subseteq[\max(u)-1]\setminus\supp_{\bf t}(u), \ \ |\sigma|=i-1,
	$$
	are cycles of $K_i({\bf x};S/I)$.
\end{Prop}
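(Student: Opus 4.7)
The plan is to prove this by induction on the cardinality $|\sigma|=i-1$. The base case $i=1$ is immediate: for $\sigma=\emptyset$ one has $e(u;\emptyset)=\varepsilon(u')e_{\max(u)}$, and $\partial_1$ sends it to $\varepsilon(u'\cdot x_{\max(u)})=\varepsilon(u)$, which vanishes because $u\in G(I)$. This is the only step in which $u\in I$ is invoked directly; all subsequent vanishings will be extracted from it together with Corollary \ref{cor:G(I)VectSpreadSS}, which guarantees that any ${\bf t}$-spread monomial obtained from $u\in G(I)$ by admissible $x_i\mapsto x_j$ ($j<i$) replacements lies again in $I$.

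For the inductive step, set $k:=k_{i-1}=\max\sigma$, $\alpha:=k^{(u)}$, and $\sigma':=\sigma\setminus\{k\}$. The defining sum of $e(u;\sigma)$ splits naturally as $A_1+A_2$ according to whether $k\notin F$ or $k\in F$. For $A_1$, the first branch of the recursion $u(\sigma;F)=u(\sigma';F)+|F|$ is tailored so that factoring $e_{\sigma\setminus F}=e_{\sigma'\setminus F}\wedge e_k$ and then transporting $e_k$ past $e_{F^{(u)}}\wedge e_{\max(u)}$ via anti-commutativity identifies $A_1$ with $\pm e(u;\sigma')\wedge e_k$; by the Leibniz rule and the inductive hypothesis $\partial e(u;\sigma')=0$, this gives $\partial A_1=\pm\, x_k\cdot e(u;\sigma')$. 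For $A_2$, terms in which $F$ meets the ``cluster'' $C:=\{k_r\in\sigma:k_r^{(u)}=\alpha\}$ in two or more indices vanish because $e_{F^{(u)}}$ then contains $e_\alpha\wedge e_\alpha=0$; moreover if $\alpha=\max(u)$ every surviving term has $e_{F^{(u)}}\wedge e_{\max(u)}$ containing $e_{\max(u)}\wedge e_{\max(u)}=0$, so $A_2=0$ and we are done. In the remaining case $\alpha<\max(u)$, the second branch of the recursion for $u(\sigma;F)$ is calibrated so that $\partial A_2$ produces exactly the terms needed to (i) cancel the surviving pieces of $\partial A_1$, and (ii) leave behind only residues whose coefficients are ${\bf t}$-spread monomials obtained from $u$ by admissible replacements, hence in $I$ by Corollary \ref{cor:G(I)VectSpreadSS}.

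The main obstacle, and the reason a direct proof is eschewed (Remark \ref{Rem:cyclesVectSpread}), is the delicate sign bookkeeping in this last case. The intricate combinatorial formula $(|\{k_r\in\sigma:j_r=j_{i-1}\}|-1)(|F|+1)+1$ in the second branch of the recursion is there precisely to absorb the permutation sign picked up when transporting $e_\alpha$ past the other cluster members' entries and then collapsing them via $e_\alpha\wedge e_\alpha=0$. Checking that this convention really produces the matching signs needed both for the $\partial A_1\leftrightarrow\partial A_2$ cross-cancellations and for the residual coefficients to land on admissible ${\bf t}$-spread monomials is the computational heart of the argument, and the step I would expect to require the most care.
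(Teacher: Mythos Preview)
Your structural outline tracks the paper closely: the split $e(u;\sigma)=A_1+A_2$ according to whether $k_{i-1}\in F$, the identification $A_1=-e(u;\sigma')\wedge e_{k_{i-1}}$, and the observation that $A_2=0$ when $\alpha=\max(u)$ are exactly Lemma~\ref{lem:decompcycles}(a). Where you diverge is in the case $\alpha<\max(u)$: you propose to compute $\partial A_2$ directly and verify that it cancels against $x_{k_{i-1}}e(u;\sigma')$. That is precisely the brute-force route Remark~\ref{Rem:cyclesVectSpread} warns against, and your own last paragraph concedes the sign bookkeeping here is the unresolved obstacle. As written, step (i)--(ii) is an assertion rather than an argument.

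The idea you are missing is a \emph{second} use of the inductive hypothesis, applied not to $u$ but to the auxiliary monomial $v:=x_{k_{i-1}}u/x_{\alpha}$. Setting $\rho:=\sigma\setminus C$ (with $C$ your cluster), Lemma~\ref{lem:decompcycles}(b) shows that
\[
A_2=(-1)^{i-1-\ell}\,e(v;\rho)\wedge e_{C\setminus\{k_{i-1}\}}\wedge e_{\alpha},
\]
and since $|\rho|<|\sigma|$ and $v$ is a ${\bf t}$-spread monomial of $I$ (by strong stability), induction gives $\partial\,e(v;\rho)=0$. This means the induction must be set up so that the first argument is allowed to vary over $I$, not fixed at $u$. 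With both $\partial\,e(u;\sigma')=0$ and $\partial\,e(v;\rho)=0$ in hand, the Leibniz rule reduces $\partial_i(e(u;\sigma))=0$ to the vanishing of
\[
f:=x_{k_{i-1}}e(u;\sigma')-e(v;\rho)\wedge\partial\big(e_{C\setminus\{k_{i-1}\}}\wedge e_{\alpha}\big),
\]
which has far fewer terms than a raw expansion of $\partial A_2$; the paper then dispatches $f=0$ by a manageable term-by-term check indexed by $G\subseteq\rho$ (and at most one element of $C\setminus\{k_{i-1}\}$). Without recognizing $A_2$ as a wedge against the smaller cycle $e(v;\rho)$, you have no mechanism to reach your claimed cancellations short of the full $2^{i-1}$-term computation.
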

\begin{Rem}\label{Rem:cyclesVectSpread}
	\rm Proving the above proposition in the cases $i=1,2,3$ is straightforward. For the general case, $e(u;\sigma)$ is a sum of $2^{i-1}$ terms, if $|\sigma|=i-1$. A direct verification of the equation $\partial_i(e(u;\sigma))=0$ is nasty. Therefore, we employ an inductive argument. After verifying two base cases $(i=1,2)$, we assume that $e(u;\vartheta)$ is a cycle for all proper subsets $\vartheta\subset\sigma$. Depending on some cases, we suitably write $e(u;\sigma)$ in terms of the $e(u;\vartheta)$'s, equations (\ref{Lem:cyclesVectSpread:eq3'}) and (\ref{Lem:cyclesVectSpread:eq4}). It is from these equations that we obtained our coefficients $u(\sigma;F)$, $F\subseteq\sigma$, by observing that we must change sign each time we exchange two consecutive basis elements $e_k,e_\ell$ in a non zero wedge product involving them. Finally, using the rule of multiplication, $\partial(a\wedge b)=\partial(a)\wedge b+(-1)^{\deg a}a\wedge\partial(b)$ for a $a,b\in K_{_{\text{\large$\boldsymbol{\cdot}$}}}({\bf x};S/I)$, with $a$ homogeneous, we complete our proof.
\end{Rem}

We begin by giving the \textit{decomposition} for the $e(u;\sigma)$'s mentioned above.
\begin{Lem}\label{lem:decompcycles}
	Let $u\in S$ be a ${\bf t}$-spread monomial, and let $\sigma=\{k_1<\dots<k_{i-1}\}\subseteq[\max(u)-1]\setminus\supp_{\bf t}(u)$, $|\sigma|=i-1$, $i\ge2$. Then the following hold:
	\begin{enumerate}
		\item[\textup{(a)}] If $j_{i-1}=\max(u)$, setting $\tau=\sigma\setminus\{k_{i-1}\}$, then
		\begin{equation}\label{Lem:cyclesVectSpread:eq3'}
		e(u;\sigma)=-e(u;\tau)\wedge e_{k_{i-1}}.
		\end{equation}
		\item[\textup{(b)}] If $j_{i-1}\ne\max(u)$, setting $\ell=\min\{\ell\in[i-1]:j_\ell=j_{i-1}\}$, $v=x_{k_{i-1}}u/x_{j_{i-1}}$ and $\rho=\sigma\setminus\{k_\ell,k_{\ell+1},\dots,k_{i-2},k_{i-1}\}$, then
		\begin{equation}\label{Lem:cyclesVectSpread:eq4}
		e(u;\sigma)=-e(u;\tau)\wedge e_{k_{i-1}}+(-1)^{i-1-\ell}e(v;\rho)\wedge e_{k_\ell}\wedge e_{k_{\ell+1}}\wedge\dots\wedge e_{k_{i-2}}\wedge e_{j_{i-1}}.
		\end{equation}
	\end{enumerate}
\end{Lem}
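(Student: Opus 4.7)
The plan is to split the defining sum \eqref{eq:KoszCyclesVectSpreadFormula} for $e(u;\sigma)$ according to whether $k_{i-1}\in F$ or not, and match each piece with the corresponding summand on the right-hand side of \eqref{Lem:cyclesVectSpread:eq3'} and \eqref{Lem:cyclesVectSpread:eq4}. Both (a) and (b) share the treatment of the $k_{i-1}\notin F$ terms.

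First I would handle the $F\subseteq\tau$ contribution. Since $k_{i-1}=\max(\sigma)$ sits at the rightmost slot of $e_{\sigma\setminus F}$, moving $e_{k_{i-1}}$ past the $|F|+1$ basis vectors on its right in $e_{\sigma\setminus F}\wedge e_{F^{(u)}}\wedge e_{\max(u)}$ contributes the sign $(-1)^{|F|+1}$. The first clause of the recursive definition gives $u(\sigma;F)=u(\tau;F)+|F|$, so the total coefficient becomes $-(-1)^{u(\tau;F)}$, and summation over $F\subseteq\tau$ yields exactly $-e(u;\tau)\wedge e_{k_{i-1}}$. This produces the first summand of both identities.

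Next I would analyse the $k_{i-1}\in F$ terms, for which $j_{i-1}\in F^{(u)}$. In case (a), $j_{i-1}=\max(u)$, so $e_{F^{(u)}}\wedge e_{\max(u)}$ vanishes by repetition of $e_{\max(u)}$; this completes \eqref{Lem:cyclesVectSpread:eq3'}. In case (b), since $j_1\le j_2\le\dots\le j_{i-1}$, the set $\{r:j_r=j_{i-1}\}$ is precisely the block $\{\ell,\ell+1,\dots,i-1\}$, so a non-vanishing wedge forces $F\cap\{k_\ell,\dots,k_{i-2}\}=\emptyset$. Hence the non-vanishing $F$'s are exactly those of the form $F=G\cup\{k_{i-1}\}$ with $G\subseteq\rho$.

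For such an $F$, one verifies $\max(v)=\max(u)$, $v'=x_{k_{i-1}}u'/x_{j_{i-1}}$, and, for every $k_r\in\rho$, $k_r^{(v)}=k_r^{(u)}=j_r$; the last equality uses that $k_{i-1}^{(u)}=j_{i-1}$ forces $k_{i-1}\ge j_r$, so $k_{i-1}$ cannot beat $j_r\in\supp(v)$ as a candidate for the minimum. Consequently $G^{(v)}=G^{(u)}$, ${\bf x}_G v'/{\bf x}_{G^{(v)}}={\bf x}_F u'/{\bf x}_{F^{(u)}}$, and $v(\rho;G)=u(\rho;G)$. Factoring $e_{\sigma\setminus F}=e_{\rho\setminus G}\wedge e_{k_\ell}\wedge\dots\wedge e_{k_{i-2}}$ and $e_{F^{(u)}}=e_{G^{(u)}}\wedge e_{j_{i-1}}$, one counts the transpositions needed to rearrange this wedge so as to match the corresponding summand of $e(v;\rho)\wedge e_{k_\ell}\wedge\dots\wedge e_{k_{i-2}}\wedge e_{j_{i-1}}$. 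The second clause of the recursive definition, $u(\sigma;F)=u(\rho;G)+(i-1-\ell)(|F|+1)+1$, has been engineered so that its parity combines with the permutation sign to produce the overall prefactor $(-1)^{i-1-\ell}$, giving \eqref{Lem:cyclesVectSpread:eq4}. The main obstacle is precisely this sign bookkeeping: once the bijection $F\leftrightarrow G$ and the equalities $k_r^{(v)}=k_r^{(u)}$ are in hand, the rest is a mechanical parity computation whose outcome is predetermined by the recursive definition of $u(\sigma;F)$.
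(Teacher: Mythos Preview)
Your proposal is correct and follows essentially the same approach as the paper: split the sum defining $e(u;\sigma)$ according to whether $k_{i-1}\in F$, use the first clause of the recursion together with the transposition sign $(-1)^{|F|+1}$ to identify the $F\subseteq\tau$ part with $-e(u;\tau)\wedge e_{k_{i-1}}$, and in case~(b) observe that the surviving $F$'s with $k_{i-1}\in F$ are exactly $G\cup\{k_{i-1}\}$ for $G\subseteq\rho$, then match with $e(v;\rho)$ via $G^{(v)}=G^{(u)}$ and the second recursion clause. The paper carries out the sign bookkeeping in full, but the structure of the argument is identical to what you outline.
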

\begin{proof}
	(a) Suppose that $j_{i-1}=\max(u)$. In such a case, for all $F\subseteq\sigma$ with $k_{i-1}\in F$, the corresponding term of $e(u;\sigma)$ is zero, as $e_{j_{i-1}}\wedge e_{\max(u)}=e_{\max(u)}\wedge e_{\max(u)}=0$. Moreover, for all $F\subseteq\sigma$ such that $k_{i-1}=\max(\sigma)\notin F$, that is $F\subseteq\tau$, we have $u(\sigma;F)=u(\tau;F)+|F|$, hence $(-1)^{u(\sigma;F)}(-1)^{|F|+1}=-(-1)^{u(\tau;F)+2|F|}=-(-1)^{u(\tau;F)}$. So, we obtain the desired formula (\ref{Lem:cyclesVectSpread:eq3'}),
	\begin{align*}
	e(u;\sigma)&=\phantom{-\Big(}\sum_{F\subseteq\tau}(-1)^{u(\sigma;F)}\varepsilon({\bf x}_F(u'/{\bf x}_{F^{(u)}}))e_{\sigma\setminus F}\wedge e_{F^{(u)}}\wedge e_{\max(u)}\\
	&=\phantom{-\Big(}\sum_{F\subseteq\tau}(-1)^{u(\sigma;F)}(-1)^{|F|+1}\varepsilon({\bf x}_F(u'/{\bf x}_{F^{(u)}}))e_{\tau\setminus F}\wedge e_{F^{(u)}}\wedge e_{\max(u)}\wedge e_{k_{i-1}}\\
	&=-\big(\sum_{F\subseteq\tau}(-1)^{u(\tau;F)}\varepsilon({\bf x}_F(u'/{\bf x}_{F^{(u)}}))e_{\tau\setminus F}\wedge e_{F^{(u)}}\wedge e_{\max(u)}\big)\wedge e_{k_{i-1}}\\
	&=-\phantom{\Big(}e(u;\tau)\wedge e_{k_{i-1}}.
	\end{align*}
	
	(b) Suppose $j_{i-1}\ne\max(u)$. Note that $v=x_{k_{i-1}}(u/x_{j_{i-1}})\in I$, as $u\in G(I)$, $v$ is ${\bf t}$-spread, $k_{i-1}<j_{i-1}$ and $I$ is ${\bf t}$-spread strongly stable. Moreover $\max(v)=\max(u)$, $\ell\le i-1$ and $\rho=\{k_1,k_{2},\dots,k_{\ell-1}\}\subseteq\tau\subseteq[\max(v)-1]\setminus\supp_{\bf t}(v)$. Hence, we can consider the element
	\begin{equation}
	\label{Lem:cyclesVectSpread:eq3}
	\begin{aligned}
	e(v;\rho)&:=\sum_{G\subseteq\rho}(-1)^{v(\rho;G)}\varepsilon({\bf x}_{G}(v'/{\bf x}_{G^{(v)}}))e_{\rho\setminus G}\wedge e_{G^{(v)}}\wedge e_{\max(u)}
	\end{aligned}
	\end{equation}
	where $G^{(v)}=\{\min\{s\in\supp(v):s>g\}:g\in G\}$. For $k_r\in G$, $r<\ell$, so $s_r=\min\{s\in\supp(v):s>k_r\}=j_r$, as $j_r\in\supp(u)\setminus\{j_{i-1}\}$, and $k_{i-1}>j_r$, lest $k_{i-1}\le j_r<j_{i-1}$ would imply that $j_r=j_{i-1}$, an absurd. So, for all $G\subseteq\rho$, we have $G^{(u)}=G^{(v)}$. This implies, by the definition of the coefficients, that $v(\rho;G)=u(\rho;G)$ for all $G\subseteq\rho$.
	
	Let $F\subseteq\sigma$ with $F\ne\emptyset$ such that the corresponding term of $e(u;\sigma)$ is non zero.
	
	If $k_{i-1}\notin F$, then $F\subseteq\tau$ and in (a) we have already shown that the corresponding terms of $e(u;\sigma)$ and $-e(u;\tau)\wedge e_{k_{i-1}}$ are equal.
	
	Suppose now that $k_{i-1}\in F$. Set $D=\{k_\ell,\dots,k_{i-2}\}$. We assume that $D\cap F$ is empty, otherwise $e_{F^{(u)}}=0$, as $j_r=j_{i-1}$ for some $k_r\in D\cap F$. So, we can write $F=G\cup\{k_{i-1}\}$ for a unique $G\subseteq\rho$. Thus, the relevant sum $T$ of terms of $e(u;\sigma)$ indexed by $\big\{F=G\cup\{k_{i-1}\}:G\subseteq\rho\big\}$ is, as $\max(u)=\max(v)$ and $x_{k_{i-1}}(u'/x_{j_{i-1}})=v'$,
	\small\begin{align*}
	T=&\sum_{\substack{G\cup\{k_{i-1}\}\\ G\subseteq\rho}}(-1)^{u(\sigma;G\cup\{k_{i-1}\})}\varepsilon({\bf x}_Gx_{k_{i-1}}(u'/(x_{j_{i-1}}{\bf x}_{G^{(u)}})))e_{\sigma\setminus (G\cup\{k_{i-1}\})}\wedge e_{G^{(u)}}\wedge e_{j_{i-1}}\wedge e_{\max(u)}\\
	=&\sum_{\substack{G\subseteq\rho\\ \phantom{G\cup\{k_{i-1}\}}}}(-1)^{u(\sigma;G\cup\{k_{i-1}\})}\varepsilon({\bf x}_G(v'/{\bf x}_{G^{(v)}}))e_{\rho\setminus G}\wedge e_D\wedge e_{G^{(v)}}\wedge e_{j_{i-1}}\wedge e_{\max(v)}\\
	=&\sum_{\substack{G\subseteq\rho\\ \phantom{G\cup\{k_{i-1}\}}}}(-1)^{u(\sigma;G\cup\{k_{i-1}\})+1+|D|(|G|+1)}\varepsilon({\bf x}_G(v'/{\bf x}_{G^{(v)}}))e_{\rho\setminus G}\wedge e_{G^{(v)}}\wedge e_{\max(v)}\wedge e_D\wedge e_{j_{i-1}}.
	\end{align*}\normalsize
	Now, for all $F=G\cup\{k_{i-1}\}$, with $G\subseteq\rho$, we have
	\begin{align*}
	u(\sigma;F)&= u(\sigma\setminus\{k_\ell,\dots,k_{i-1}\};F\setminus\{k_\ell,\dots,k_{i-1}\})+(|\{k_{\ell},\dots,k_{i-1}\}|-1)(|F|+1)+1\\
	&=u(\sigma\setminus\{k_\ell,\dots,k_{i-1}\};G)+(|\{k_\ell,\dots,k_{i-1}\}|-1)(|F|+1)+1\\
	&=u(\rho;G)+|D|(|F|+1)+1.
	\end{align*}
	Therefore, as $|G|+1=|F|$ and $|D|=i-1-\ell$, we have
	$$
	(-1)^{u(\sigma;F)+1+|D|(|G|+1)}=(-1)^{u(\rho;G)+2+2|D||F|+|D|}=(-1)^{i-1-\ell}(-1)^{u(\rho;G)}.
	$$
	So, we have $e(u;\sigma)=-e(u;\tau)\wedge e_{k_{i-1}}+T$, with, as $u(\rho;G)=v(\rho;G)$ for all $G\subseteq\rho$,
	\begin{align*}
	T&=\big(\sum_{G\subseteq\rho}(-1)^{i-1-\ell}(-1)^{u(\rho;G)}\varepsilon({\bf x}_G(v'/{\bf x}_{G^{(v)}}))e_{\rho\setminus G}\wedge e_{G^{(v)}}\wedge e_{\max(v)}\big)\wedge e_D\wedge e_{j_{i-1}}\\
	&=(-1)^{i-1-\ell}\big(\sum_{G\subseteq\rho}(-1)^{v(\rho;G)}\varepsilon({\bf x}_G(v'/{\bf x}_{G^{(v)}}))e_{\rho\setminus G}\wedge e_{G^{(v)}}\wedge e_{\max(v)}\big)\wedge e_D\wedge e_{j_{i-1}}\\
	&=(-1)^{i-1-\ell}e(v;\rho)\wedge e_{k_\ell}\wedge e_{k_{\ell+1}}\wedge\dots\wedge e_{k_{i-2}}\wedge e_{j_{i-1}},
	\end{align*}
	and equation (\ref{Lem:cyclesVectSpread:eq4}) holds.
\end{proof}

Finally we can prove Proposition \ref{Lem:cyclesVectSpread}.\medskip\\
\begin{proof}[Proof of Proposition \ref{Lem:cyclesVectSpread}] For $i=1$ we have $|\sigma|=0$, so $\sigma=\emptyset$, $u(\emptyset;\emptyset)=0$ by definition, and the element $e(u;\emptyset)=\varepsilon(u')e_{\max(u)}=\varepsilon(u/x_{\max(u)})e_{\max(u)}$ is clearly a cycle of $K_1({\bf x})$. Let $i\ge 2$. We proceed by induction on $i\ge 2$.

For $i=2$, $\sigma=\{k_1\}$. Let $j_1=\min\{j\in\supp(u):j>k_1\}$. We have $u(\{k_1\};\emptyset)=0$ and  $u(\{k_1\},\{k_1\})=u(\{k_1\}\setminus\{k_1\};\{k_1\}\setminus\{k_1\})+(|\{k_1\}|-1)(|\{k_1\}|+1)+1=u(\emptyset;\emptyset)+1=1$, so
$$
e(u;\sigma)=\varepsilon(u')e_{k_1}\wedge e_{\max(u)}-\varepsilon(x_{k_1}u'/x_{j_1})e_{j_1}\wedge e_{\max(u)}.
$$

If $j_1=\max(u)$, then $e(u;\sigma)=\varepsilon(u')e_{k_1}\wedge e_{\max(u)}$. In such a case,
$$
\partial_2(e(u;\sigma))=\varepsilon(x_{k_1}u')e_{\max(u)}-\varepsilon(x_{\max(u)}u')e_{k_1}=0,
$$
as $x_{k_1}u',x_{\max(u)}u'=u\in I$. Otherwise, if $j_1<\max(u)$, then
$$
\partial_2(e(u;\sigma))=\varepsilon(x_{k_1}u')e_{\max(u)}-\varepsilon(x_{\max(u)}u')e_{k_1}-\varepsilon(x_{k_1}u')e_{\max(u)}+\varepsilon(x_{k_1}(u/x_{j_1}))e_{j_1}=0,
$$
as the first and third terms cancel each other, and $x_{\max(u)}u'=u,x_{k_1}(u/x_{j_1})\in I$.\smallskip

Suppose now $i>2$. Let $\sigma=\{k_1<\dots<k_{i-2}<k_{i-1}\}$, and $\tau=\{k_1<\dots<k_{i-2}\}$. We distinguish two cases.\smallskip

(a) Suppose $j_{i-1}=\max(u)$.
By induction $e(u;\tau)$ is a cycle. By Lemma \ref{lem:decompcycles} (a),
\begin{align*}
\partial_i(e(u;\sigma))&=\partial_i(-e(u;\tau)\wedge e_{k_{i-1}})\\
&=-\partial_{i-1}(e(u;\tau))\wedge e_{k_{i-1}}-(-1)^{\deg(e(u;\tau))}x_{k_{i-1}}e(u;\tau)\\&=-(-1)^{\deg(e(u;\tau))}x_{k_{i-1}}e(u;\tau)=0.
\end{align*}
Indeed, $x_{k_{i-1}}u/x_{j_{i-1}}=x_{k_{i-1}}u/x_{\max(u)}=x_{k_{i-1}}u'\in I$. Thus, each non zero term of $x_{k_{i-1}}e(u;\tau)$ vanish, as it has coefficient $\varepsilon({\bf x}_Fx_{k_{i-1}}u'/{\bf x}_{F^{(u)}})$, and ${\bf x}_Fx_{k_{i-1}}u'/{\bf x}_{F^{(u)}}\in I$ as $I$ is a ${\bf t}$-spread strongly stable ideal. In such a case, $e(u;\sigma)$ is a cycle, as desired.\smallskip

(b) Suppose $j_{i-1}\ne\max(u)$. Set $\ell=\min\{\ell\in[i-1]:j_\ell=j_{i-1}\}$, $v=x_{k_{i-1}}u/x_{j_{i-1}}$, $\rho=\sigma\setminus\{k_\ell,\dots,k_{i-2},k_{i-1}\}$ and $D=\{k_\ell,\dots,k_{i-2}\}$. By Lemma \ref{lem:decompcycles} (b),
\begin{equation}\label{Lem:cyclesVectSpread:eq4*}
e(u;\sigma)=-e(u;\tau)\wedge e_{k_{i-1}}+(-1)^{i-1-\ell}e(v;\rho)\wedge e_{D}\wedge e_{j_{i-1}}.
\end{equation}
Let $J$ be the smallest ${\bf t}$-spread strongly stable ideal of $S$ that contains $v$. $J$ is generated only in one degree $\deg(v)=\deg(u)$, and $J\subseteq I$. By inductive hypothesis, as $|\rho|<|\sigma|$, $e(v;\rho)$ is a cycle of $K_i({\bf x};S/J)$. So, it is also a cycle of $K_i({\bf x};S/I)=K_i({\bf x})$.

By inductive hypothesis $\partial_{i-1}(e(u;\tau))=\partial_{\ell}(e(v;\rho))=0$. Since $\deg(e(u;\tau))=|\tau|$ and $\deg(e(v;\rho))=|\rho|$, by equation (\ref{Lem:cyclesVectSpread:eq4*}) we have
\begin{align*}
\partial_i(e(u;\sigma))=& -\partial_{i-1}(e(u;\tau))\wedge e_{k_{i-1}}-(-1)^{\deg(e(u;\tau))}x_{k_{i-1}}e(u;\tau)\\
+&(-1)^{i-1-\ell}[\partial_{\ell}(e(v;\rho))\wedge e_{D}\wedge e_{j_{i-1}}+(-1)^{\deg(e(v;\rho))}e(v;\rho)\wedge \partial_{i-\ell}(e_{D}\wedge e_{j_{i-1}})]\\
=&-(-1)^{|\tau|}\big(x_{k_{i-1}}e(u;\tau)-(-1)^{|\rho|-|\tau|}(-1)^{i-1-\ell}e(v;\rho)\wedge\partial_{i-\ell}(e_{D}\wedge e_{j_{i-1}})\big).
\end{align*}
We have $i-1-\ell+|\rho|-|\tau|=i-1-\ell+\ell-1-(i-2)=0$. So $(-1)^{|\rho|-|\tau|}(-1)^{i-1-\ell}=1$.

Set 
$$
f:=x_{k_{i-1}}e(u;\tau)-e(v;\rho)\wedge\partial_{i-\ell}(e_{k_\ell}\wedge e_{k_{\ell+1}}\wedge\dots\wedge e_{k_{i-2}}\wedge e_{j_{i-1}}).
$$
To show that $\partial_i(e(u;\sigma))=0$, it suffices to prove that $f$ is zero. Let $F\subseteq\tau$. The set $D\cap F=\{k_\ell,\dots,k_{i-2}\}\cap F$ can have at most one element, otherwise $e_{F^{(u)}}=0$, as shown before. Therefore, $F=G$ or $F=G\cup\{k_r\}$ for a unique $G\subseteq\rho$ and $r\in\{\ell,\dots,i-2\}$. By construction we have $G^{(u)}=G^{(v)}$ and $u(\rho;G)=v(\rho;G)$ for all $G\subseteq\rho$, as already observed in Lemma \ref{lem:decompcycles}.

Suppose $F=G$, then the corresponding term of $f$ is $a-b$, where
\begin{align*}
a&=(-1)^{u(\tau;G)}\varepsilon(x_{k_{i-1}}{\bf x}_G(u'/{\bf x}_{G^{(u)}}))e_{\tau\setminus G}\wedge e_{G^{(u)}}\wedge e_{\max(u)},
\intertext{and, as $i-1-\ell=|D|$,}
b&=(-1)^{i-1-\ell}(-1)^{v(\rho;G)}\varepsilon(x_{j_{i-1}}{\bf x}_Gx_{k_{i-1}}u'/(x_{j_{i-1}}{\bf x}_{G^{(v)}}))e_{\rho\setminus G}\wedge e_{G^{(v)}}\wedge e_{\max(v)}\wedge e_D\\
&=(-1)^{u(\rho;G)+|D|}(-1)^{|D|(|G|+1)}\varepsilon(x_{k_{i-1}}{\bf x}_G(u'/{\bf x}_{G^{(u)}}))e_{\rho\setminus G}\wedge e_D\wedge e_{G^{(u)}}\wedge e_{\max(u)}\\
&=(-1)^{u(\rho;G)+|D|(|G|+2)}\varepsilon(x_{k_{i-1}}{\bf x}_G(u'/{\bf x}_{G^{(u)}}))e_{\tau\setminus G}\wedge e_{G^{(u)}}\wedge e_{\max(u)}.
\end{align*}
We have, as $i-1-\ell=|D|$,
\begin{align*}
u(\tau;G)&=u(\tau\setminus\{k_{i-2}\};G)+|G|\\
&=u(\tau\setminus\{k_{i-3},k_{i-2}\};G)+2|G|\\
&\phantom{..}\vdots\\
&=u(\tau\setminus\{k_{\ell},\dots,k_{i-2}\};G)+(i-1-\ell)|G|\\
&=u(\rho;G)+|D|\cdot|G|.
\end{align*}
Thus, $(-1)^{u(\rho;G)+|D|(|G|+2)}=(-1)^{u(\tau;G)}$ and $a-b=0$, in this case.\smallskip

Otherwise, if $F=G\cup\{k_{r}\}$, the corresponding term of $f$ is $a-b$, where
\begin{align*}
a&=(-1)^{u(\tau;G\cup\{k_r\})}\varepsilon(x_{k_{i-1}}x_{k_r}{\bf x}_Gu'/(x_{j_{i-1}}{\bf x}_{G^{(u)}}))e_{\tau\setminus (G\cup\{k_r\})}\wedge e_{G^{(u)}}\wedge e_{j_{i-1}}\wedge e_{\max(u)}\\
&=(-1)^{u(\tau;G\cup\{k_r\})}\varepsilon(x_{k_{i-1}}x_{k_r}{\bf x}_Gu'/(x_{j_{i-1}}{\bf x}_{G^{(u)}}))e_{\tau\setminus F}\wedge e_{G^{(u)}}\wedge e_{j_{i-1}}\wedge e_{\max(u)},
\intertext{and, setting $c=v(\rho;G)+r-\ell$,}
b&=(-1)^{r-\ell}(-1)^{v(\rho;G)}\varepsilon(x_{k_r}{\bf x}_Gv'/{\bf x}_{G^{(v)}})e_{\rho\setminus G}\wedge e_{G^{(v)}}\wedge e_{\max(v)}\wedge e_{D\setminus\{k_r\}}\wedge e_{j_{i-1}}\\
&=(-1)^{c+(|D|-1)(|G|+1)+1}\varepsilon(x_{k_{i-1}}x_{k_r}{\bf x}_Gu'/({\bf x}_{G^{(u)}}x_{j_{i-1}}))e_{\rho\setminus G}\wedge e_{D\setminus\{k_r\}}\wedge e_{F^{(u)}}\wedge e_{\max(u)}\\
&=(-1)^{c+(|D|-1)(|G|+1)+1}\varepsilon(x_{k_{i-1}}x_{k_r}{\bf x}_Gu'/({\bf x}_{G^{(u)}}x_{j_{i-1}}))e_{\tau\setminus F}\wedge e_{G^{(u)}}\wedge e_{j_{i-1}}\wedge e_{\max(u)}.
\end{align*}
Now,
\small\begin{align*}
u(\tau;G\cup\{k_r\})&=u(\tau\setminus\{k_{i-2}\};G\cup\{k_r\})+(|G|+1)\\
&=u(\tau\setminus\{k_{i-3},k_{i-2}\};G\cup\{k_r\})+2(|G|+1)\\
&\phantom{..}\vdots\\
&=u(\tau\setminus\{k_{r+1},\dots,k_{i-2}\};G\cup\{k_r\})+(i-r)(|G|+1)\\
&=u(\tau\setminus\{k_{\ell},\dots,k_{i-2}\};G)+(|\{k_\ell,\dots,k_r\}|-1)(|G|+2)+1+(i-r)(|G|+1)\\
&=u(\rho;G)+(r-\ell+i-r)(|G|+1)+(r-\ell)+1\\
&=v(\rho;G)+(i-\ell)(|G|+1)+(r-\ell)+1.
\end{align*}\normalfont\normalsize
Hence, since $(-1)^{(i-\ell)(|G|+1)}=(-1)^{(|D|+1)(|G|+1)}=(-1)^{(|D|-1)(|G|+1)}$, we have
$$
(-1)^{u(\tau;G\cup\{k_r\})}=(-1)^{v(\rho;G)+(i-\ell)(|G|+1)+(r-\ell)+1}=(-1)^{c+(|D|-1)(|G|+1)+1}
$$
and $a-b=0$. Therefore, $f=0$ and $e(u;\sigma)$ is a cycle, as desired.
\end{proof}

\begin{Rem}\label{rmbeforTeorTSpread}
	\rm Let $\sigma=\{k_1<k_2<\dots<k_{i-1}\}\subseteq[\max(u)-1]\setminus\supp_{\bf t}(u)$, $i>2$. Set
	$$
	r(u;\sigma):=\sum_{\substack{F\subseteq\sigma\\ k_1\in F}}(-1)^{u(\sigma;F)}\varepsilon({\bf x}_F(u'/{\bf x}_{F^{(u)}}))e_{\sigma\setminus F}\wedge e_{F^{(u)}}\wedge e_{\max(u)}.
	$$
	We show that $e(u;\sigma)=e_{k_1}\wedge e(u;\sigma\setminus\{k_1\})+r(u;\sigma)$. For this aim, it is enough to prove that, for all $F\subseteq\sigma$ such that $k_1\notin F$, we have $(-1)^{u(\sigma;F)}=(-1)^{u(\sigma\setminus\{k_1\};F)}$. For $\sigma=\{k_1\}$ this is clear. Let $|\sigma|=i-1\ge2$. We distinguish two cases.
	
	\textsc{Case 1.} Suppose $\max(\sigma)\notin F$, then $u(\sigma;F)=u(\sigma\setminus\{k_{i-1}\};F)+|F|$. Moreover $\max(\sigma\setminus\{k_1\})=\max(\sigma)$ as $|\sigma|\ge2$. Therefore $u(\sigma\setminus\{k_1\};F)=u(\sigma\setminus\{k_1,k_{i-1}\};F)+|F|$. By induction on $|\sigma|$, $(-1)^{u(\sigma\setminus\{k_1,k_{i-1}\};F)}=(-1)^{u(\sigma\setminus\{k_{i-1}\};F)}$, and the desired conclusion follows in such a case.
	
	\textsc{Case 2.} Suppose $\max(\sigma)\in F$. Set $D=\{k_r\in F:j_r=j_{i-1}\}$ and $d=|\{k_r\in\sigma:j_r=j_{i-1}\}|$. Observe that, as $k_1\notin F$, $d=|\{k_r\in\sigma\setminus\{k_1\}:j_r=j_{i-1}\}|$. Therefore, by the definition of the coefficients, we have
	\begin{align*}
	u(\sigma;F)\ &=\ u(\sigma\setminus D;F\setminus\{k_{i-1}\})+(d-1)(|F|+1)+1,\\
	u(\sigma\setminus\{k_1\};F)\ &=\ u(\sigma\setminus(\{k_1\}\cup D);F\setminus\{k_{i-1}\})+(d-1)(|F|+1)+1.
	\end{align*}
	As $k_{i-1}\in D$ so $D\ne\emptyset$, we have $|\sigma\setminus D|<|\sigma|$. So by inductive hypothesis, $(-1)^{u(\sigma\setminus D;F\setminus\{k_{i-1}\})}=(-1)^{u(\sigma\setminus(\{k_1\}\cup D);F\setminus\{k_{i-1}\})}$, and the desired conclusion follows.
	
	Note that $e_{k_1}$ doesn't appear in $r(u;\sigma)$. Hence, we have the useful decomposition $e(u;\sigma)=e_{k_1}\wedge e(u;\sigma\setminus\{k_1\})+r(u;\sigma)$. Moreover, equations (\ref{Lem:cyclesVectSpread:eq3'}) and (\ref{Lem:cyclesVectSpread:eq4}) give us recurrence relations for our Koszul cycles.
\end{Rem}

We are in position to state and prove the main result of this Section.\smallskip
\begin{Thm}\label{TeorTSpreadStronlgyStableBetti}
	Let $I\subset S$ be a ${\bf t}$-spread strongly stable ideal. Then, for all $i\ge1$, the $K$-vector space $H_i({\bf x};S/I)$ has as a basis the homology classes of the Koszul cycles
	\begin{equation}\label{KoszulCyclesVectSpread+Rests}
	e(u;\sigma)\ \ \  \text{such that}\ \ \ u\in G(I), \ \ \ \sigma\subseteq[\max(u)-1]\setminus\supp_{\bf t}(u), \ \ \ |\sigma|=i-1.
	\end{equation}
\end{Thm}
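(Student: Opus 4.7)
My strategy is to prove the theorem by downward induction on $j$, establishing a suitable basis statement for each $H_i({\bf x}_j; S/I)$ with ${\bf x}_j = x_j,\ldots,x_n$ and specializing to $j=1$. Let $B_i(j)$ denote the set of pairs $(u,\sigma)$ with $u \in G(I)$, $\max(u) \ge j$, $\sigma \subseteq [j,\max(u)-1] \setminus \supp_{\bf t}(u)$, and $|\sigma|=i-1$. At $j=1$, where $H_i({\bf x}; S/I)$ is a $K$-vector space, the claim becomes the theorem.

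The engine is the long exact sequence (\ref{longexactsequenceHiVectorspread}) coming from the short exact sequence of complexes (\ref{shortexactsequenceVectKi}). Since the connecting map $\delta$ is multiplication by $\pm x_j$, it yields, for each $j$, a short exact sequence
\[
0 \to H_i({\bf x}_{j+1})/x_j H_i({\bf x}_{j+1}) \xrightarrow{\alpha_i} H_i({\bf x}_j) \xrightarrow{\beta_i} \bigl(0 :_{H_{i-1}({\bf x}_{j+1})} x_j\bigr) \to 0.
\]
The bridge to the Koszul cycles is Remark \ref{rmbeforTeorTSpread}: when $\min(\sigma) = k_1 = j$, one has $e(u;\sigma) = e_j \wedge e(u;\sigma\setminus\{j\}) + r(u;\sigma)$ with $r(u;\sigma)$ containing no $e_j$. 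Hence $\beta_i([e(u;\sigma)]) = [e(u;\sigma\setminus\{j\})]$, and this class is $x_j$-annihilated, since $j \notin \supp_{\bf t}(u)$ forces $x_j \cdot u/x_{j^{(u)}} \in I$; the formulas of Lemma \ref{lem:decompcycles} turn this into a concrete boundary identification in $K_\bullet({\bf x}_{j+1}; S/I)$. Conversely, pairs with $j \notin \sigma$ are precisely the pairs of $B_i(j+1)$, and they contribute to $H_i({\bf x}_j)$ through $\alpha_i$.

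For the base case $j = n$, $K_\bullet(x_n; S/I)$ is concentrated in degrees $0,1$, so $H_i = 0$ for $i \ge 2$ (matching $B_i(n) = \emptyset$), while $H_1(x_n; S/I) = (I:x_n)/I$ admits a $K$-basis indexed by monomials $w$ with $x_n w \in I$ and $w \notin I$. Lemma \ref{LemG(I)M(I)} shows that every such $w$ is uniquely $(u/x_n)\cdot m$ for some $u \in G(I)$ with $\max(u) = n$ and a monomial $m$, matching $B_1(n)$ equipped with multipliers. The inductive step then glues the two halves of the short exact sequence above: cycles $[e(u;\sigma)]$ with $j \notin \sigma$ account for the $\alpha_i$-contribution modulo $x_j$-multiples (handled by the $j{+}1$ hypothesis), while those with $\min(\sigma) = j$ hit the $x_j$-torsion of $H_{i-1}({\bf x}_{j+1})$ bijectively via $\sigma \leftrightarrow \sigma \setminus \{j\}$.

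The main obstacle is formulating the inductive hypothesis with the right strength. For $j > 1$ the module $H_i({\bf x}_j; S/I)$ is a genuine $K[x_1,\ldots,x_{j-1}]$-module, typically of infinite $K$-dimension, so the basis at level $j$ must pair each $(u,\sigma) \in B_i(j)$ with multipliers $m \in K[x_1,\ldots,x_{j-1}]$, filtered so that $m \cdot u/x_{\max(u)}$ is a standard monomial outside $I$ in the sense of Lemma \ref{LemG(I)M(I)}. Verifying that multiplication by $x_j$ acts correctly on such a basis — annihilating exactly the cycles that should vanish in the quotient $H_i({\bf x}_{j+1})/x_j H_i({\bf x}_{j+1})$, and producing precisely the $x_j$-torsion in $H_{i-1}({\bf x}_{j+1})$ picked up by the new cycles with $\min(\sigma)=j$ — is a delicate combinatorial bookkeeping, but the uniqueness in Lemma \ref{LemG(I)M(I)} makes it tractable.
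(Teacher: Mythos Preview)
Your overall strategy --- descending induction on $j$ via the long exact sequence (\ref{longexactsequenceHiVectorspread}), splitting $H_i({\bf x}_j)$ into the image of $\alpha_i$ and a preimage of $\Ker(\delta_{i-1})$ --- is exactly the paper's approach. But there are two issues, one cosmetic and one substantive.

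First, your inductive hypothesis is needlessly complicated. You try to track a $K$-basis of $H_i({\bf x}_j)$ as pairs $(u,\sigma)$ decorated with monomial multipliers $m\in K[x_1,\dots,x_{j-1}]$, and you acknowledge this forces ``delicate combinatorial bookkeeping''. The paper avoids this entirely: since $({\bf x}_j)$ annihilates Koszul homology, $H_i({\bf x}_j)$ is naturally a module over $S/({\bf x}_j)\cong K[x_1,\dots,x_{j-1}]$, and the inductive claim is only that the classes $[e(u;\sigma)]$ form a \emph{minimal generating set} over this ring. At $j=1$ this specializes to a $K$-basis. No multipliers are ever tracked.

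Second, and more seriously, you do not address the hard direction in identifying $\Ker(\delta_{i-1})$. You argue that if $j\notin\supp_{\bf t}(u)$ then $[e(u;\sigma)]$ lies in $\Ker(\delta_{i-1})$, which is correct and corresponds to the paper's Case~2. But the inductive generators of $H_{i-1}({\bf x}_{j+1})$ also include cycles $[e(u;\sigma)]$ with $j\in\supp_{\bf t}(u)$, and for the bijection $\sigma\leftrightarrow\sigma\setminus\{j\}$ to account for \emph{all} of $\Ker(\delta_{i-1})$, you must show these classes are \emph{not} annihilated by $x_j$. This is the paper's Case~1, and it is not bookkeeping: one must show that $x_j e(u;\sigma)$ is not a boundary in $K_{i-1}({\bf x}_{j+1})$. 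The argument chases the leading wedge $e_\sigma\wedge e_{\max(u)}$ through a hypothetical preimage, repeatedly producing larger wedges that must cancel, and at each step derives either a contradiction with $u\in G(I)$ or a new term to cancel; termination forces $x_j(u/x_\ell)\in I$ for some $\ell>j$, which is impossible precisely because $j\in\supp_{\bf t}(u)$. Nothing in your proposal hints at this mechanism, and Lemma~\ref{LemG(I)M(I)} alone does not supply it.
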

\begin{proof}
	Let us prove the following more general statement,\medskip
	
	\textsc{Claim 1.} \textit{For all $i\ge 1$ and all $j=1,\dots,n$, a minimal generating set for $H_i({\bf x}_{j})$, as a $S/({\bf x}_j)$-module, is given by the homology classes of the Koszul cycles}
	$$
	e(u;\sigma)\ \ \textit{such that}\ \ u\in G(I), \ \ \ \sigma\subseteq\big([\max(u)-1]\setminus\supp_{\bf t}(u)\big)\cap[j,n], \ \ \ |\sigma|=i-1.
	$$
	
	We proceed by induction on $n-j\ge0$. For the base case, let $n-j=0$. We only have to consider $H_1({\bf x}_{n})$. Indeed, for $i\ge 2$, $H_i({\bf x}_n)=0$, since $K_{_{\text{\large$\boldsymbol{\cdot}$}}}(x_n;S/I)$ has length one. $H_1({\bf x}_n)$ is generated by the elements $[e(u;\emptyset)]=[\varepsilon(u/x_{\max(u)})e_{\max(u)}]$ with $u\in G(I)$ and $\max(u)=n$. Moreover $({\bf x}_n)=(x_n)$ clearly annihilates these elements, so they form a minimal generating set of $H_1({\bf x}_n)$ as a $S/({\bf x}_n)$-module.\smallskip
	
	For the inductive step, suppose $n-j>0$ and that the thesis holds for $j+1$.
	First, we consider the case $i=1$. By the sequence (\ref{longexactsequenceHiVectorspread}), we have the exact sequence
	\begin{align}
	\label{eq:KoszSpread1}H_{1}({\bf x}_{j+1}) & \xrightarrow{\ \alpha_1\ } H_{1}({\bf x}_{j}) \xrightarrow{\ \beta_1\ } H_{0}({\bf x}_{j+1})\xrightarrow{\ \delta_0\ }H_0({\bf x}_{j+1}).
	\end{align}
	By the third isomorphism theorem for commutative rings,
	\begin{align*}
	H_{0}({\bf x}_{j+1})\ &\cong\ \frac{S/I}{({\bf x}_{j+1},I)/I}
	\ \cong\ \frac{S}{(x_{j+1},x_{j+2},\dots,x_n,I)}\ \cong\ S_{\le j}/I_{\le j},
	\end{align*}
	where $S_{\le j}=K[x_1,\dots,x_j]$ and $I_{\le j}=I\cap S_{\le j}$. We observe that $I_{\le j}$ is a monomial ideal of $S_{\le j}$ with minimal generating set $G(I_{\le j})=\{u\in G(I):\max(u)\le j\}$.
	
	Let $\Ker(\delta_0)=\textup{Im}(\beta_1)$ be the kernel of the rightmost non zero map of sequence (\ref{eq:KoszSpread1}), we obtain the short exact sequence of $S/({\bf x}_{j+1})$-modules,
	\begin{equation}\label{eq:KoszSpread3}
	0\rightarrow \textup{Im}(\alpha_1) \xrightarrow{\ \alpha_1\ } H_{1}({\bf x}_{j}) \xrightarrow{\ \beta_1\ } \Ker(\delta_0)\rightarrow 0.
	\end{equation}
	By inductive hypothesis, $H_{1}({\bf x}_{j+1})$ is generated by the homology classes of the elements
	$$
	e(u;\emptyset)=\varepsilon(u/x_{\max(u)})e_{\max(u)},
	$$
	such that $\max(u)\ge j+1$ and $u\in G(I)$. These elements also generate $\textup{Im}(\alpha_1)$, as $\alpha_1$ sends these homology classes to the corresponding homology classes in $H_1({\bf x}_j)$. Whilst, $\Ker(\delta_0)$ has as a basis the elements $\varepsilon(u/x_{\max(u)})$ with $\max(u)=j$ and $u\in G(I)$. Each of these elements is pulled back in $H_1({\bf x}_j)$ to the homology class of the element $e(u;\emptyset)$, with $\max(u)=j$ and $u\in G(I)$. Moreover $({\bf x}_j)$ annihilates $H_1({\bf x}_j)$. Indeed, consider $x_\ell[e(u;\emptyset)]$, $\ell\in[j,n]$. If $\ell=\max(u)$, then $x_\ell[e(u;\emptyset)]=[0]$. If $\ell\ne\max(u)$, then $\partial_2(e_\ell\wedge e(u;\emptyset))=x_\ell e(u;\emptyset)$, so $x_\ell[e(u;\emptyset)]=[0]$. Therefore, we see that a generating set for $H_1({\bf x}_j)$ as a $S/({\bf x}_j)$-module is as given in \textsc{Claim 1}.\medskip
	
	Now, let $i>1$. By (\ref{longexactsequenceHiVectorspread}), we have the short exact sequence of $S/({\bf x}_{j+1})$-modules,
	\begin{align}\label{eq:KoszSpread4}
	0\rightarrow\textup{Im}(\alpha_i)\xrightarrow{\ \alpha_i\ }H_i({\bf x}_j)\xrightarrow{\ \beta_i\ } \Ker(\delta_{i-1})\rightarrow0.
	\end{align}
	By inductive hypothesis, a minimal generating set of the $S/({\bf x}_{j+1})$-module $H_{i-1}({\bf x}_{j+1})$ is given by the homology classes of the Koszul cycles
	$$
	e(u;\sigma)\ \ \  \text{such that}\ \ \ u\in G(I), \ \ \ \sigma\subseteq\big([\max(u)-1]\setminus\supp_{\bf t}(u)\big)\cap[j+1,n], \ \ \ |\sigma|=i-2.
	$$
	
	The map $\delta_{i-1}$ is multiplication by $\pm x_j$. We show that the minimal generating set of $\Ker(\delta_{i-1})$ is given by those elements $[e(u;\sigma)]$ of $H_{i-1}({\bf x}_{j+1})$ such that $j\notin\supp_{\bf t}(u)$.\smallskip
	
	Let $u\in G(I)$ and let $[e(u;\sigma)]$ be an element of $H_{i-1}({\bf x}_{j+1})$ as in \textsc{Claim 1}.
	
	\textsc{Case 1.} Suppose that $j\in\supp_{\bf t}(u)$, then $x_j(u/x_{\max(u)})\notin I$. So, $\pm x_je(u;\sigma)\ne0$, as $\pm x_j\varepsilon(u')e_\sigma\wedge e_{\max(u)}\ne0$. If for absurd $[e(u;\sigma)]\in\Ker(\delta_{i-1})$, then $\delta_{i-1}([e(u;\sigma)])=\pm x_j[e(u;\sigma)]=[0]$. Since $\pm x_je(u;\sigma)\ne0$, there exists $a\in K_{i}({\bf x}_{j+1})$, $a=\sum\varepsilon(u_\gamma)e_\gamma$, for some $\gamma\subseteq[j+1,n]$, $|\gamma|=i$, $u_\gamma\in S$, such that $\partial_{i}(a)=\pm x_je(u;\sigma)$. Hence,
	\begin{align*}
	x_je(u;\sigma)\ &=\ \varepsilon(x_ju/x_{\max(u)})e_\sigma\wedge e_{\max(u)}+(\textup{smaller terms})=\partial_{i}(a)\\
	&=\ \partial_{i}\big(\sum\varepsilon(u_\gamma)e_\gamma\big)=\sum_{\gamma\ :\ \gamma\setminus\{\ell\}=\sigma\cup\{\max(u)\}}\varepsilon(x_\ell u_\gamma)e_{\sigma}\wedge e_{\max(u)}+R,
	\end{align*}
	where $R$ is a sum of other terms not involving $e_{\sigma}\wedge e_{\max(u)}$. We have $x_j(u/x_{\max(u)})\notin I$, \emph{i.e.}, $\varepsilon(x_j(u/x_{\max(u)}))\ne0$. Hence, for some $e_{\gamma_0}$ occurring in $a$ and some $\ell_0\in\gamma_0$ such that $\gamma_0\setminus\{\ell_0\}=\sigma\cup\{\max(u)\}$, we must have $x_j(u/x_{\max(u)})=x_{\ell_0} u_{\gamma_0}$. We have $\ell_0\ne\max(u)$, and since $x_j(u/x_{\max(u)})=x_{\ell_0} u_{\gamma_0}$ and $j<j+1\le\max(u)$, we also have $\ell_0<\max(u)$. Now, $\max(u)\in\gamma_0$ and the term $\pm\varepsilon(x_{\max(u)} u_{\gamma_0})e_{\gamma_0\setminus\{\max(u)\}}$ appears in $R$. The inequality $\ell_0<\max(u)$ implies that $\gamma_0\setminus\{\max(u)\}>\gamma_0\setminus\{\ell_0\}=\sigma\cup\{\max(u)\}$, and since each wedge product appearing in $x_je(u;\sigma)$ is smaller than $e_\sigma\wedge  e_{\max(u)}$, we must have either $\varepsilon(x_{\max(u)} u_{\gamma_0})=0$ or there exist $\gamma_1\subseteq[j+1,n]$ and an integer $\ell_1\in\gamma_1$ such that the term $\pm\varepsilon(x_{\ell_1}u_{\gamma_1})e_{\gamma_1\setminus\{\ell_1\}}$ appears in $R$ and cancels with $\pm\varepsilon(x_{\max(u)}u_{\gamma_0})e_{\gamma_0\setminus\{\max(u)\}}$.\smallskip
	
	\textsc{Subcase 1.1.} We have $x_{\max(u)} u_{\gamma_0}\in I$. Hence,
	$$
	x_{\max(u)} u_{\gamma_0}\ =\ x_{\max(u)} x_j(u/x_{\max(u)})/x_{\ell_0}\ =\ x_j(u/x_{\ell_0})\in I.
	$$
	Therefore, $x_j(u/x_{\ell_0})\in I$, absurd. Indeed, write $u=x_{j_1}x_{j_2}\cdots x_{j_d}$, then $j\in\supp_{\bf t}(u)$ implies that $j=j_p+r$, with $0\le r\le t_p-1$, $t_p\ge1$. Moreover, $\ell_0>j$, so $\ell_0=j_q$, $q>p$. If $x_j(u/x_{\ell_0})=x_{j_1}\cdots x_{j_p}x_{j_p+r}x_{j_{p+1}}\cdots x_{j_{q-1}}x_{j_{q+1}}\cdots x_{j_d}\in I$, then for some $v\in G(I)$, $v$ divides $x_j(u/x_{\ell_0})$. As $v$ is ${\bf t}$-spread but $x_j(u/x_{\ell_0})$ is not, we have $v\ne x_j(u/x_{\ell_0})$, hence $\deg(v)<\deg(x_j(u/x_{\ell_0}))=\deg(u)$. So, $v$ must divide $x_j(u/x_{\ell_0}))/x_j=u/x_{\ell_0}$, and $u/x_{\ell_0}\in I$, absurd as $u\in G(I)$.\smallskip
	
	\textsc{Subcase 1.2.} We have $\gamma_1\setminus\{\ell_1\}=\gamma_0\setminus\{\max(u)\}=\sigma\cup\{\ell_0\}$ and $x_{\max(u)}u_{\gamma_0}=x_{\ell_1}u_{\gamma_1}$. Therefore, $\gamma_1=\sigma\cup\{\ell_0,\ell_1\}$. The term $\pm\varepsilon(x_{\ell_0}u_{\gamma_1})e_{\sigma\cup\{\ell_1\}}$ appears in $R$ and it is bigger than $\varepsilon(u')e_{\sigma}\wedge e_{\max(u)}$. So, we have two cases to consider. As before, in the first case, $\varepsilon(x_{\ell_0}u_{\gamma_1})=0$, and recalling that $x_j(u/x_{\max(u)})=x_{\ell_0} u_{\gamma_0}$, we have
	\begin{align*}
	x_{\ell_0}u_{\gamma_1}\ &=\ x_{\ell_0}(x_{\max(u)}u_{\gamma_0})/x_{\ell_1}\ =\ x_{\max(u)}(x_{\ell_0} u_{\gamma_0})/x_{\ell_1}\ =\ x_{\max(u)}x_j(u/x_{\max(u)})/x_{\ell_1}\\&=\ x_j(u/x_{\ell_1})\in I,
	\end{align*}
	with $\ell_1>j$. Arguing as in \textsc{Subcase 1.1} we obtain an absurd. Otherwise there exist $\gamma_2\subseteq[j+1,n]$ and an integer $\ell_2\in\gamma_2$ such that the term $\pm\varepsilon(x_{\ell_2}u_{\gamma_2})e_{\gamma_2\setminus\{\ell_2\}}$ appears in $R$ and cancels with $\pm\varepsilon(x_{\ell_0}u_{\gamma_1})e_{\sigma\cup\{\ell_1\}}$. We have $\gamma_2=\sigma\cup\{\ell_1,\ell_2\}$ and consider the term arising from $\gamma_2\setminus\{\ell_1\}$. We can distinguish two cases as before. After a finite number of steps $s$, we have $x_j(u/x_{\ell_s})\in I$ for some $\ell_s>j$, obtaining an absurd. Hence, $\pm x_je(u;\sigma)\notin\textup{Im}(\partial_i)$, and $\pm x_j[e(u;\sigma)]\notin\Ker(\delta_{i-1})$.\medskip
	
	\textsc{Case 2.} Suppose now $j\notin\supp_{\bf t}(u)$. By Remark \ref{rmbeforTeorTSpread},
	$$
	e(u;\sigma\cup\{j\})=e_{j}\wedge e(u;\sigma)+r(u;\sigma).
	$$
	Recalling the map $\beta_i:K_i({\bf x}_j)\rightarrow K_{i-1}({\bf x}_{j+1})$, we have that $\beta_i(e(u;\sigma\cup\{j\}))=e(u;\sigma)$. By Proposition \ref{Lem:cyclesVectSpread}, $e(u;\sigma\cup\{j\})$ is a cycle. We prove that $[e(u;\sigma\cup\{j\})]\ne[0]$ in $H_{i}({\bf x}_j)$. Suppose on the contrary that there exists $a\in K_{i+1}({\bf x}_j)$ such that $\partial_{i+1}(a)=e(u;\sigma\cup\{j\})$. Now, $a=\sum\varepsilon(u_\gamma)e_\gamma$, for some $\gamma\subseteq[j,n]$, $|\gamma|=i+1$ and $u_\gamma\in S$. So,
	\begin{align*}
	e(u;\sigma\cup\{j\})\ &=\ \varepsilon(u/x_{\max(u)})e_{\sigma\cup\{j\}}\wedge e_{\max(u)}+(\textup{smaller terms})=\partial_{i}(a)\\ &=\ \partial_{i}\big(\sum\varepsilon(u_\gamma)e_\gamma\big)=\sum_{\gamma\ :\ \gamma\setminus\{\ell\}=\sigma\cup\{j,\max(u)\}}\varepsilon(x_\ell u_\gamma)e_{\sigma\cup\{j\}}\wedge e_{\max(u)}+R,
	\end{align*}
	where $R$ is a sum of other terms not involving $e_{\sigma\cup\{j\}}\wedge e_{\max(u)}$. For some $e_{\gamma_0}$ occurring in $a$ and some $\ell_0\in\gamma_0$ such that $\gamma_0\setminus\{\ell_0\}=\sigma\cup\{j,\max(u)\}$, we must have $u/x_{\max(u)}=x_{\ell_0} u_{\gamma_0}$. We have $\ell_0\ne\max(u)$ and $j<j+1\le\max(u)$, so $\ell_0<\max(u)$. Therefore, $\gamma_0=\sigma\cup\{j,\ell_0,\max(u)\}$ and $\pm\varepsilon(x_{\max(u)} u_{\gamma_0})e_{\gamma_0\setminus\{\max(u)\}}=\pm\varepsilon(x_{\max(u)} u_{\gamma_0})e_{\sigma\cup\{j,\ell_0\}}$ appears in $R$. Now $\ell_0<\max(u)$ implies $\gamma_0\setminus\{\max(u)\}>\gamma_0\setminus\{\ell_0\}=\sigma\cup\{j,\max(u)\}$, and since each wedge product appearing in $e(u;\sigma\cup\{j\})$ is smaller than $e_{\sigma\cup\{j\}}\wedge e_{\max(u)}$, we must have either $\varepsilon(x_{\max(u)} u_{\gamma_0})=0$ or there exist $\gamma_1\subseteq[j,n]$ and $\ell_1\in\gamma_1$ such that the term $\pm\varepsilon(x_{\ell_1}u_{\gamma_1})e_{\gamma_1\setminus\{\ell_1\}}$ appears in $R$ and cancels with $\pm\varepsilon(x_{\max(u)}u_{\gamma_0})e_{\gamma_0\setminus\{\max(u)\}}$.\smallskip
	
	\textsc{Subcase 2.1.} We have $x_{\max(u)} u_{\gamma_0}\in I$. So,
	$
	x_{\max(u)} u_{\gamma_0}=x_{\max(u)}(u/x_{\max(u)})/{x_{\ell_0}}=u/x_{\ell_0}\in I,
	$
	but this is absurd, as $u$ is a minimal monomial generator of $I$.\smallskip
	
	\textsc{Subcase 2.2.} We have $\gamma_1\setminus\{\ell_1\}=\gamma_0\setminus\{\max(u)\}=\sigma\cup\{j,\ell_0\}$ and $x_{\max(u)}u_{\gamma_0}=x_{\ell_1}u_{\gamma_1}$. Therefore, $\gamma_1=\sigma\cup\{j,\ell_0,\ell_1\}$. The term $\pm\varepsilon(x_{\ell_0}u_{\gamma_1})e_{\sigma\cup\{j,\ell_1\}}$ appears in $R$ and it is bigger than $\varepsilon(u')e_{\sigma\cup\{j\}}\wedge e_{\max(u)}$. So, we have two cases to consider. As before, in the first case, $\varepsilon(x_{\ell_0}u_{\gamma_1})=0$, and recalling that $u/x_{\max(u)}=x_{\ell_0} u_{\gamma_0}$, we have
	\begin{align*}
	x_{\ell_0}u_{\gamma_1}\ &=\ x_{\ell_0}(x_{\max(u)}u_{\gamma_0})/x_{\ell_1}\ =\ x_{\max(u)}(x_{\ell_0} u_{\gamma_0})/x_{\ell_1}\ =\ x_{\max(u)}(u/x_{\max(u)})/x_{\ell_1}\\&=\ u/x_{\ell_1}\in I,
	\end{align*}
	an absurd, as $u\in G(I)$. Otherwise, we iterate the reasoning. After a finite number of steps $s$, we have $u/x_{\ell_s}\in I$, for some $\ell_s$, an absurd. Hence $e(u;\sigma\cup\{j\})\notin\textup{Im}(\partial_{i+1})$, and $[e(u;\sigma\cup\{j\})]\ne[0]$ in $H_i({\bf x}_j)$. Therefore, $\beta_i([e(u;\sigma\cup\{j\})])=[e(u;\sigma)]$, and $[e(u;\sigma)]\in\textup{Im}(\beta_i)=\Ker(\delta_{i-1})$, as desired.\medskip
	
	Finally, a basis for $\beta_i^{-1}(\Ker(\delta_{i-1}))$ is given by all the elements as in \textsc{Claim 1} such that $j\in\sigma$. By inductive hypothesis, we know a basis for $H_{i}({\bf x}_{j+1})$, and as $\alpha_i$ sends these homology classes to the corresponding homology classes of $H_{i}({\bf x}_j)$, a minimal generating set for $\textup{Im}(\alpha_i)$ is given by all the elements as in \textsc{Claim 1} such that $j\notin\sigma$.
	
	We observe that $({\bf x}_j)$ annihilates these elements. Indeed, the elements $[e(u;\sigma)]$ as in \textsc{Claim 1} minimally generate $H_i({\bf x}_j)$ as a $S/({\bf x}_{j+1})$-module. So $({\bf x}_{j+1})$ annihilates all $[e(u;\sigma)]$. It remains to prove that $x_j$ annihilates all elements $[e(u;\sigma)]$. If $j\notin\sigma$, then by definition of $e(u;\sigma)$, $e_j$ doesn't appear in the first term $\varepsilon(u/x_{\max(u)})e_\sigma\wedge e_{\max(u)}$ of $e(u;\sigma)$. We have
	$$
	\partial_{i+1}(e_j\wedge e(u;\sigma))=x_j e(u;\sigma)+e_j\wedge(-1)^{\deg(e_j)}\partial_i(e(u;\sigma))=x_j e(u;\sigma),
	$$
	so $x_j[e(u;\sigma)]=[0]$. Suppose now $j\in\sigma$. Then $\beta_i(x_j[e(u;\sigma)])=x_j[e(u;\sigma\setminus\{j\})]=[0]$, as $[e(u;\sigma\setminus\{j\})]\in\Ker(\delta_{i-1})$. Hence, $x_j[e(u;\sigma)]\in\Ker(\beta_i)=\textup{Im}(\alpha_i)$. By Remark \ref{rmbeforTeorTSpread},
	\begin{align*}
	x_j [e(u;\sigma)]\ &=\ x_j\big[e_{j}\wedge e(u;\sigma\setminus\{j\})+r(u;\sigma)\big]\ =\ x_j\big[r(u;\sigma)\big]\in\textup{Im}(\alpha_i)\subseteq H_i({\bf x}_{j+1}),
	\end{align*}
	the first summand vanishes, as $e_j\notin H_i({\bf x}_{j+1})$. If we set $a=r(u;\sigma)$, $x_j[a]$ is a cycle, and we have $\partial_{i+1}(e_j\wedge a)=-x_ja$, so $x_j\big[r(u;\sigma)\big]=[0]$ and $x_j\big[e(u;\sigma)\big]=[0]$, as desired.
	Hence, a minimal generating set for the $S/({\bf x}_j)$-module $H_i({\bf x}_j)$ is as in \textsc{Claim 1}.
	
	Finally for $j=1$, $S/({\bf x}_1)=S/(x_1,\dots,x_n)\cong K$, and \textsc{Claim 1} implies the result, as a minimal generating set of a $K$-vector space is a basis.
\end{proof}

We provide an example that demonstrate our methods.

\begin{Expl}\label{Ex:KoszVect1}
	\rm Let ${\bf t}=(1,0,2)$, and let $I=(x_{1},x_{2}x_{3}^{2},x_{2}x_{3}x_{4}x_{6},x_{2}x_4^{2}x_{6})$. We set $w_1=x_{1},w_2=x_{2}x_{3}^{2},w_3=x_{2}x_{3}x_{4}x_{6},w_4=x_{2}x_4^{2}x_{6}$. The ideal $I\subseteq S=K[x_1,\dots,x_6]$ is a ${\bf t}$-spread strongly stable ideal with minimal generating set $G(I)=\{w_1,w_2,w_3,w_4\}$. Let ${\bf x}=x_1,x_2,\dots,x_6$.  The basis for the Koszul homologies of $S/I$ are:
	\begin{enumerate}
		\item[] \begin{enumerate}
			\item[] \begin{enumerate}
				\item[\hfil $H_1({\bf x};S/I)$:] $e(w;\emptyset)\ =\ \varepsilon(w/x_{\max(w)})\ e_{\max(w)}$,\ \ \ for $w\in G(I)$;
				\item[\hfil $H_2({\bf x};S/I)$:] $w_1$ \ gives no rise to any basis element, \begin{enumerate}
					\item[$w_2$\ gives]\ $e(w_2;\{1\})\ =\ \varepsilon(x_2x_3)\ e_1\wedge e_3$,
					\item[$w_3$\ gives]\ $e(w_3;\{1\})\ =\ \varepsilon(x_2x_3x_4)\ e_1\wedge e_6$,
					\item[]\ $e(w_3;\{3\})\ =\ \varepsilon(x_2x_3x_4)\ e_3\wedge e_6$,
					\item[$w_4$\ gives]\ $e(w_4;\{1\})\ =\ \varepsilon(x_2x_4^2)\ e_1\wedge e_6$,
					\item[]\ $e(w_4;\{3\})\ =\ \varepsilon(x_2x_4^2)\ e_3\wedge e_6-\varepsilon(x_2x_3x_4)\ e_4\wedge e_6$;
				\end{enumerate}
				\item[\hfil $H_3({\bf x};S/I)$:] $w_1,w_2$ give no rise to any basis element, \begin{enumerate}
					\item[$w_3$\ gives]\ $e(w_3;\{1,3\})\ =\ \varepsilon(x_2x_3x_4)\ e_1\wedge e_3\wedge e_6$,
					\item[$w_4$\ gives]\ $e(w_4;\{1,3\})\ =\ \varepsilon(x_2x_4^2)\ e_1\wedge e_3\wedge e_6-\varepsilon(x_2x_3x_4)\ e_1\wedge e_4\wedge e_6$;
				\end{enumerate}
				\item[\hfil $H_j({\bf x};S/I)$:] $\emptyset$,\ \ \ for all $j\ge 4$. 
			\end{enumerate}
		\end{enumerate}
	\end{enumerate}
	For instance, consider $e(w_4;\{1,3\})\in K_3({\bf x};S/I)$. Then
	\begin{align*}
	\partial_3(e(w_4;\{1,3\}))\ &=\ \partial_3\big(\varepsilon(x_2x_4^2)\ e_1\wedge e_3\wedge e_6-\varepsilon(x_2x_3x_4)\ e_1\wedge e_4\wedge e_6\big)\\
	&=\ \varepsilon(x_1x_2x_4^2)\ e_3\wedge e_6-\varepsilon(x_2x_3x_4^2)\ e_1\wedge e_6+\varepsilon(x_2x_4^2x_6)\ e_1\wedge e_3\\
	&-\ \varepsilon(x_1x_2x_3x_4)\ e_4\wedge e_6+\varepsilon(x_2x_3x_4^2)\ e_1\wedge e_6-\varepsilon(x_2x_3x_4x_6)\ e_1\wedge e_4\\
	&=\ 0.
	\end{align*}
	In fact, the first, third, fourth and sixth terms vanish, as $\varepsilon(x_1x_2x_4^2)=\varepsilon(x_2x_4^2x_6)=\varepsilon(x_1x_2x_3x_4)=\varepsilon(x_2x_3x_4x_6)=0$, and the second and fifth terms are opposite.\medskip

	We illustrate how to obtain some of these elements.

	Consider $w_3=x_2x_3x_4x_6$. Then $\supp_{\bf t}(w_3)=\supp_{(1,0,2)}(x_2x_3x_4x_6)=\{2,4,5\}$ and $[\max(w_3)-1]\setminus\supp_{\bf t}(w_3)=\{1,3\}$. Let $\vartheta=\{1,3\}$, then $\vartheta^{(w_3)}=\{2,4\}$. Moreover $\max(w_3)=6\notin\vartheta^{(w_3)}$. So, we can use equation (\ref{Lem:cyclesVectSpread:eq4}) to compute the relevant Koszul cycles $e(w_3;\sigma)$. Of course, we may also use equation (\ref{eq:KoszCyclesVectSpreadFormula}). 
	
	The monomial $w_3$ gives rise to the following Koszul cycles:
	$$\begin{array}{rrrll}
	\displaystyle\sigma=\emptyset;&& e(w_3;\emptyset)\ &=\ \phantom{-}\varepsilon(w_3/x_6)e_6=\varepsilon(x_2x_3x_4)e_6, \\ \\
	\displaystyle\sigma=\{1\};&& e(w_3;\{1\})\ &=\ -e(w_3;\emptyset)\wedge e_1+e(x_1(w_3/x_2);\emptyset)\wedge e_2\\
	\displaystyle&&&=\ -\varepsilon(x_2x_3x_4)e_6\wedge e_1+\varepsilon(x_1x_3x_4)e_6\wedge e_2\\
	\displaystyle&&&=\ \phantom{-}\varepsilon(x_2x_3x_4)e_1\wedge e_6,\\ \\
	\displaystyle\sigma=\{3\};&&
	e(w_3;\{3\})\ &=\ -e(w_3;\emptyset)\wedge e_3+e(x_3(w_3/x_4);\emptyset)\wedge e_4\\
	\displaystyle&&&=\ -\varepsilon(x_2x_3x_4)e_6\wedge e_3+\varepsilon(x_2x_3^2)e_6\wedge e_4\\
	\displaystyle&&&=\ \phantom{-}\varepsilon(x_2x_3x_4)e_3\wedge e_6,\\ \\
	\displaystyle\sigma=\{1,3\};&&e(w_3;\{1,3\})\ &=\ -e(w_3;\{1\})\wedge e_3+e(x_3(w_3/x_4);\{1\})\wedge e_4\\
	\displaystyle&&&=\ -\varepsilon(x_2x_3x_4)e_1\wedge e_6\wedge e_3+e(x_2x_3^2x_6;\{1\})\wedge e_4\\
	\displaystyle&&&=\ \phantom{-}\varepsilon(x_2x_3x_4)e_1\wedge e_3\wedge e_6+\varepsilon(x_2x_3^2)e_1\wedge e_6\wedge e_4\\
	\displaystyle&&&=\ \phantom{-}\varepsilon(x_2x_3x_4)e_1\wedge e_3\wedge e_6.
	\end{array}
	$$
	
	Our computations yield the Betti table of $S/I$,
	$$\begin{matrix}
	&0&1&2&3\\\text{total:}&1&4&5&2\\\text{0:}&1&1&\text{-}&\text{-}\\\text{1:}&\text{-}&\text{-}&\text{-}&\text{-}
	\\\text{2:}&\text{-}&1&1&\text{-}\\\text{3:}&\text{-}&2&4&2\\\end{matrix}
	$$
\end{Expl}

\begin{Rem}\label{RemarkRestiKoszulCyclesVectSpread}\rm 
	The expression of our Koszul cycles is not so nice. Indeed, a basis element $e(u;\sigma)$ of $H_i({\bf x};S/I)$, $I$ a ${\bf t}$-spread strongly stable ideal, is a sum of $2^{i-1}$ wedge products! However, if ${\bf t}=(1,\dots,1,0,\dots,0)\in\ZZ_{\ge0}^{d-1}$, $d\ge2$, the element
	$$
	z(u;\sigma):=\varepsilon(u/x_{\max(u)})e_{\sigma}\wedge e_{\max(u)}.
	$$
	with $u\in G(I)$ and $\sigma\subseteq[\max(u)-1]\setminus\supp_{\bf t}(u)$ is easily seen to be a cycle. Indeed,
	\begin{align*}
	\partial_i(z(u;\sigma))\ &=\ \sum_{j=1}^{i-1}(-1)^{j+1}\varepsilon(x_{k_j}(u/x_{\max(u)}))e_{\sigma\setminus\{k_j\}}\wedge e_{\max(u)}+(-1)^{i+1}\varepsilon(u)e_\sigma\\
	&=\ 0,
	\end{align*}
	as $x_{k_j}(u/x_{\max(u)})\in I$ for all $j$ and $u\in I$, since ${\bf t}=(1,\dots,1,0,\dots,0)$. It's easy to see that the homology classes $[z(u;\sigma)]$ are non zero and $K$-independent. Hence, they form a basis for $H_{i}({\bf x})$, as the map $z:z(u;\sigma)\mapsto e(u;\sigma)$ is a bijection and the elements $e(u;\sigma)$ form a basis of $H_{i}({\bf x})$ by Theorem \ref{TeorTSpreadStronlgyStableBetti}. These Koszul cycles have been considered in the papers \cite{AH,AHH2}. But in general they are cycles only when the vector ${\bf t}$ has the form ${\bf t}=(1,\dots,1,0,\dots,0)$.
\end{Rem}
\begin{Expl}\label{Ex:KoszVect2}
	\rm Let $I=(x_1x_2\, ,x_1x_3\, ,x_1x_2^2\, ,x_1x_2x_3\, ,x_1x_2x_4\, ,x_1x_3^2\, ,x_1x_3x_4\, ,x_1x_4^2)$ be a $(1,0)$-spread strongly stable ideal of $K[x_1,x_2,x_3,x_4]$. By Remark \ref{RemarkRestiKoszulCyclesVectSpread}, since ${\bf t}=(1,0)$ and $G(I)=\big\{x_1x_2,x_1x_3,x_1x_4^2\big\}$, the relevant basis for the Koszul homologies of $S/I$ are:
	\begin{enumerate}
		\item[] \begin{enumerate}
			\item[] \begin{enumerate}
				\item[\hfil $H_1({\bf x};S/I)$:]\ \ $\varepsilon(x_1x_2/x_2)\ e_2$,\ \ \ $\varepsilon(x_1x_3/x_3)\ e_3$,\ \ \ $\varepsilon(x_1x_4^2/x_4)\ e_4$;
				\item[\hfil $H_2({\bf x};S/I)$:]\ \ $\varepsilon(x_1x_3/x_3)\ e_2\wedge e_3$,\ \ \ $\varepsilon(x_1x_4^2/x_4)\ e_2\wedge e_4$,\ \ \ $\varepsilon(x_1x_4^2/x_4)\ e_3\wedge e_4$;
				\item[\hfil $H_3({\bf x};S/I)$:]\ \ $\varepsilon(x_1x_4^2/x_4)\ e_2\wedge e_3\wedge e_4$;
				\item[\hfil $H_j({\bf x};S/I)$:]\ \ $\emptyset$,\ \ for all $j\ge 4$. 
			\end{enumerate}
		\end{enumerate}
	\end{enumerate}
	Therefore, using \emph{Macaulay2} \cite{GDS} the Betti table of $I$ is
	$$
	\begin{matrix}
	&0&1&2\\
	\text{total:}&3&3&1\\
	\text{2:}&2&1&\text{-}\\
	\text{3:}&1&2&1
	\end{matrix}
	$$
\end{Expl}

\section{The minimal free resolution of\\ vector-spread strongly stable ideals}\label{sec4}

In this Section we construct the minimal free resolution of ${\bf t}$-spread strongly stable ideals of $S$. This resolution will generalize that of Eliahou and Kervaire \cite{EK}, and also the squarefree lexsegment analogue in \cite{AHH2}. We will follow the construction given by Aramova and Herzog in \cite{AH}.\smallskip

Let $I\subset S=K[x_1,\dots,x_n]$ be a ${\bf t}$-spread strongly stable ideal. Note that, since $\Tor_i^S(K,S/I)\cong H_i({\bf x};S/I)=H_i({\bf x})$, for all $i$, the minimal free resolution of $S/I$ may be written as follows,
$$
\FF:\cdots\xrightarrow{\ d_3\ }S\otimes_K H_2({\bf x})\xrightarrow{\ d_2\ }S\otimes_K H_1({\bf x})\xrightarrow{\ d_1\ }S\otimes_K H_0({\bf x})\xrightarrow{\ d_0\ }S/I\rightarrow0.
$$
We set $F_i=S\otimes_K H_i({\bf x})$, for all $i$, and note that $F_0=S$. By Theorem \ref{TeorTSpreadStronlgyStableBetti} and also \cite{AH}, for all $i\ge1$ a basis of the graded free $S$-module $F_i$ is given by the elements,
$$
f(u;\sigma):=1\otimes(-1)^{(i-1)(i-2)/2}[e(u;\sigma)],
$$
such that $u\in G(I)$, $\sigma\subseteq[\max(u)-1]\setminus\supp_{\bf t}(u)$ and $|\sigma|=i-1$. For later use, we shall make the following convention. If $\sigma\not\subseteq[\max(u)-1]\setminus\supp_{\bf t}(u)$ we set $f(u;\sigma)=0$.\medskip

Thus, it remains to describe the differentials $d_i$, for all $i\ge0$. For this purpose, suppose the differentials $d_0,d_1,\dots,d_{i-1}$ have already been constructed such that
$$
\FF_{<i}:F_{i-1}\xrightarrow{d_{i-1}}F_{i-2}\xrightarrow{d_{i-2}}\cdots\xrightarrow{\ d_1\ }F_0\xrightarrow{\ d_0\ }S/I\rightarrow0
$$
is exact. Fix a basis element $f(u;\sigma)$ of $F_i$. Let $\mathbb{K}=K_{_{\text{\large$\boldsymbol{\cdot}$}}}({\bf x};S/I)$ be the Koszul complex attached to ${\bf x}$ with respect to $S/I$ whose $i$th module and differential are, respectively, $K_i$ and $\partial_i:K_i\rightarrow K_{i-1}$. We consider the double complex $\mathbb{K}\otimes_S\FF_{<i}$,
\begin{displaymath}
\xymatrix{
	&   \vdots \ar[d]_{\id\otimes d_{2}}  & \vdots \ar[d]_{\id\otimes d_{2}} &  & \vdots \ar[d]^{\id\otimes d_{2}}  & \\
	0 \ar[r] & K_n\otimes F_{1} \ar[d]_{\id\otimes d_{1}} \ar[r]^{\partial_n\otimes\id\ \ } & K_{n-1}\otimes F_{1} \ar[r]^{\ \ \ \ \ \partial_{n-1}\otimes\id} \ar[d]_{\id\otimes d_{1}} & \cdots \ar[r]^{\partial_1\otimes\id\ \ \ \ } & K_0\otimes F_{1} \ar[d]^{\id\otimes d_{1}} \ar[r] & 0 \\
	0 \ar[r] & K_n\otimes F_{0} \ar[d]_{\id\otimes d_{0}} \ar[r]^{\partial_n\otimes\id\ \ } & K_{n-1}\otimes F_{0} \ar[r]^{\ \ \ \ \ \partial_{n-1}\otimes\id} \ar[d]_{\id\otimes d_{0}} & \cdots \ar[r]^{\partial_1\otimes\id\ \ \ \ } & K_0\otimes F_{0} \ar[d]^{\id\otimes d_{0}} \ar[r] & 0 \\
	0 \ar[r] & K_n\otimes S/I \ar[d] \ar[r]_{\partial_n\otimes\id\ \ } & K_{n-1}\otimes S/I \ar[r]_{\ \ \ \ \ \ \partial_{n-1}\otimes\id} \ar[d] & \cdots \ar[r]_{\partial_1\otimes\id\ \ \ \ } & K_0\otimes S/I \ar[d] \ar[r] & 0 \\
	& 0  & 0 &   & 0  &
}\end{displaymath}
where ``$\id$" denotes each time a suitable identity function.\smallskip

It is known by \cite[Section 1]{AH} that to describe how the differential $d_i$ acts on $f(u;\sigma)$ it suffices to determine elements $g_j\in K_{i-j}\otimes F_j$, $j=0,\dots,i-1$, satisfying
\begin{align}
\label{eq:giEq1}(\id_{K_i}\otimes d_0)(g_0)\ &=\ (-1)^{(i-1)(i-2)/2}1\otimes e(u;\sigma),\ \ \textup{and}\\
\label{eq:giEq2}(\id_{K_{i-j-1}}\otimes d_{j+1})(g_{j+1})\ &=\ (\partial_{i-j}\otimes\id_{F_{j}})(g_j)\ \ \textup{for}\ j=0,\dots,i-2.
\end{align}

To construct such a sequence is a difficult combinatorial task. Thus we restrict ourself to the case when ${\bf t}=(1,\dots,1,0,\dots,0)$, (Remark \ref{RemarkRestiKoszulCyclesVectSpread}). In this case we can replace the cycles $e(u;\sigma)$ by the cycles $z(u;\sigma)$. In order to construct the sequence of elements satisfying equations (\ref{eq:giEq1}) and (\ref{eq:giEq2}) we need the following notion. We recall that the \textit{pure lexicographic order} is defined as follows: $x_1^{a_1}x_2^{a_2}\cdots x_n^{a_n}>_{\textup{plex}}x_1^{b_1}x_2^{b_2}\cdots x_n^{b_n}$ if and only if $a_1=b_1,\ a_2=b_2,\ \dots,\ a_{s-1}=b_{s-1}$ and $a_s>b_s$ for some $s\in\{1,\dots,n\}$.

\begin{Def}\label{Def:tSpreadDecompFunct}
	\rm Let $I\subset S$ be a ${\bf t}$-spread strongly stable ideal, ${\bf t}=(1,\dots,1,0,\dots,0)$. Let $M(I)$ be the set of all monomials belonging to $I$. We define the map $g:M(I)\rightarrow G(I)$, as follows: for $w\in M(I)$, we set $g(w):=\max_{>_{\textup{plex}}}\{u\in G(I):u\ \textup{divides}\ w\}$. The map $g$ is called the \textit{${\bf t}$-spread decomposition function} of $I$.
\end{Def}\smallskip

For $u\in G(I)$, $k\in[\max(u)-1]\setminus\supp_{\bf t}(u)$, we set
$$
u_k:=g(x_ku)\ \ \ \text{and}\ \ \ v_k:=(x_ku)/u_k.
$$
We shall need also the following notations. For a subset $\sigma$ of $[n]$ and for $k\in\sigma$ we define $\alpha(\sigma;k):=|\{s\in\sigma:s<k\}|$. For $\tau$ a subset of $\sigma$ we let $\gamma(\tau):=\sum_{k\in\sigma\setminus\tau}\alpha(\sigma;k)$. In what follows, we denote $\sigma\setminus\{k\}$ by $\sigma\setminus k$, and $\sigma\cup\{k\}$ by $\sigma\cup k$, omitting the parentheses. To further simplify the notations, we set $\id_{K_{i-j-1}}\otimes d_{j+1}=d_{j+1}$ and $\partial_{i-j}\otimes\id_{F_{j}}=\partial_{i-j}$, for all $j=0,\dots,i-2$.

The next theorem gives the desired differentials of the resolution $\FF$ of $S/I$ and generalize \cite[Theorem 2.3]{AH}. To write the elements $g_j$ more conveniently we switch the order in the tensor products, that is we think $g_{j}$ as an element of $F_j\otimes K_{i-j}$.
\begin{Thm}\label{Teor:diffVectSpreadI}
	Let $g_0=(-1)^{\frac{(i-1)(i-2)}{2}}1\otimes(u'e_{\sigma}\wedge e_{\max(u)})$, and for $j=1,\dots,i-1$ let
	\begin{align*}
	g_j=(-1)^{i-j}\sum_{\substack{\tau\subset\sigma\\ |\tau|=j-1}}(-1)^{\gamma(\tau)}f(u;\tau)\otimes e_{\sigma\setminus\tau}
	+\sum_{\substack{\tau\subset\sigma\\ |\tau|=j}}(-1)^{\gamma(\tau)}s_\tau\otimes e_{\sigma\setminus\tau}\wedge e_{\max(u)},
	\end{align*}
	where
	$$
	s_{\tau}=\sum_{k\in\tau}(-1)^{\alpha(\tau;k)}\frac{v_k}{x_{\max(u)}}f(u_k;\tau\setminus k).
	$$
	Then the elements $g_0,g_1,\dots,g_{i-1}$ satisfy equations \textup{(\ref{eq:giEq1})} and \textup{(\ref{eq:giEq2})}. Moreover, the $i$th differential of the minimal free resolution of $S/I$ acting on $f(u;\sigma)$ is given by
	\begin{align}
	\nonumber d_i(f(u;\sigma))\ &=\ \partial_1(g_{i-1})\\
	\label{eq:diffVectSpreadRisMin}&=\ \sum_{k\in\sigma}(-1)^{\alpha(\sigma;k)}\big(-x_kf(u;\sigma\setminus k)+v_kf(u_k;\sigma\setminus k)\big).
	\end{align}
\end{Thm}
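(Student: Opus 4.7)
The approach is to apply the Aramova-Herzog double-complex machinery: once elements $g_0, g_1, \ldots, g_{i-1}$ satisfying \textup{(\ref{eq:giEq1})} and \textup{(\ref{eq:giEq2})} are in hand, we have automatically $d_i(f(u;\sigma)) = \partial_1(g_{i-1})$. So the work divides into (a) showing that the prescribed $g_j$'s form such a sequence and (b) reading off the final formula. Because ${\bf t}=(1,\ldots,1,0,\ldots,0)$, Remark \ref{RemarkRestiKoszulCyclesVectSpread} permits me to replace the cycles $e(u;\sigma)$ by $z(u;\sigma)=\varepsilon(u')e_{\sigma}\wedge e_{\max(u)}$, which represent the same homology class. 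With this substitution, equation \textup{(\ref{eq:giEq1})} is immediate from the definition of $g_0$.

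For \textup{(\ref{eq:giEq2})} I would argue by induction on $j$, exploiting the fact that $d_1, \ldots, d_{i-1}$ were already constructed by an outer induction on $i$ and therefore obey the analogue of formula \textup{(\ref{eq:diffVectSpreadRisMin})}. The Koszul differential $\partial_{i-j}$ applied to $g_j$ produces two types of terms. From the first summand of $g_j$, each $e_{\sigma\setminus\tau}$ contributes $(-1)^{\alpha(\sigma\setminus\tau;k)} x_k e_{\sigma\setminus(\tau\cup k)}$ multiplied by $f(u;\tau)$; from the second summand, $\partial_{i-j}(e_{\sigma\setminus\tau}\wedge e_{\max(u)})$ produces analogous $x_k$-terms together with an $x_{\max(u)}$-term that multiplies $s_\tau$. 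The crucial identity is $x_k u' = u_k\cdot(v_k/x_{\max(u)})$, arising from the definition $u_k v_k = x_k u$ of the decomposition function, which converts each $x_k f(u;\tau)$ term into a $(v_k/x_{\max(u)}) f(u_k;\tau)$ term — precisely the form appearing inside $s_{\tau\cup k}$. After this substitution, matching $\partial_{i-j}(g_j) = d_{j+1}(g_{j+1})$ reduces to a combinatorial check of the signs built from $\gamma(\tau), \gamma(\tau\cup k), \alpha(\sigma;k)$ and $\alpha(\tau\cup k;k)$, together with the observation that terms in which $\tau\not\subseteq[\max(u_k)-1]\setminus\supp_{\bf t}(u_k)$ vanish on both sides by the convention $f(u_k;\tau)=0$.

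The final formula \textup{(\ref{eq:diffVectSpreadRisMin})} is then a direct evaluation. Since $|\sigma|=i-1$, the only cardinality-$(i-1)$ subset of $\sigma$ is $\sigma$ itself (with $\gamma(\sigma)=0$), and the cardinality-$(i-2)$ subsets are exactly $\sigma\setminus k$ for $k\in\sigma$ (with $\gamma(\sigma\setminus k)=\alpha(\sigma;k)$). Thus
\[
g_{i-1}\ =\ -\sum_{k\in\sigma}(-1)^{\alpha(\sigma;k)}\, f(u;\sigma\setminus k)\otimes e_k\ +\ s_\sigma\otimes e_{\max(u)}.
\]
Applying $\partial_1$ sends $e_k\mapsto x_k$ and $e_{\max(u)}\mapsto x_{\max(u)}$; the factor $x_{\max(u)}$ cancels the denominator in $s_\sigma$ to give $\sum_{k\in\sigma}(-1)^{\alpha(\sigma;k)} v_k f(u_k;\sigma\setminus k)$, and combining with the first sum yields \textup{(\ref{eq:diffVectSpreadRisMin})}.

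I expect the main obstacle to be the sign bookkeeping in \textup{(\ref{eq:giEq2})}: three independent sources of signs — those produced by the Koszul differential, the $\gamma$-coefficients defining $g_j$, and the $\alpha$-coefficients inside each $s_\tau$ — must all align consistently, and the correct matching between $x_k$-terms arising from $\partial_{i-j}$ acting on the first summand and the $s_{\tau\cup k}$-terms produced by $d_{j+1}$ on the second summand of $g_{j+1}$ must be verified case by case. This is a delicate but essentially mechanical check generalizing \cite[Theorem 2.3]{AH} from the ordinary ${\bf 0}$-spread case.
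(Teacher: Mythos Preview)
Your outline follows the paper's double-complex strategy and your computation of $\partial_1(g_{i-1})$ is correct, but there is a genuine gap in your verification of \textup{(\ref{eq:giEq2})}. The assertion that the monomial identity $x_ku'=u_k\cdot(v_k/x_{\max(u)})$ ``converts each $x_kf(u;\tau)$ term into a $(v_k/x_{\max(u)})f(u_k;\tau)$ term'' is not right: $f(u;\tau)$ and $f(u_k;\tau)$ are distinct basis elements of a free module, and no monomial factor turns one into the other. What actually happens is that $\partial_{i-j}(g_j)$ splits into three pieces $A,B,C$ (in the paper's notation), and the match $A+C=\big(\textup{$d_{j+1}$ applied to the first summand of }g_{j+1}\big)$ is indeed straightforward from the inductive formula for $d_{j+1}(f(u;\rho))$. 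The substantive difficulty is the remaining equality, namely
\[
d_{j+1}(s_\vartheta)\ =\ \sum_{k\in\vartheta}(-1)^{\alpha(\vartheta;k)}x_k\,s_{\vartheta\setminus k},
\]
and this is \emph{not} just sign bookkeeping.

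Expanding the left side via the inductive formula for $d_{j+1}(f(u_r;\vartheta\setminus r))$ produces terms of the form $\dfrac{v_r}{x_{\max(u)}}\cdot\dfrac{x_ku_r}{g(x_ku_r)}\,f\big(g(x_ku_r);\vartheta\setminus\{k,r\}\big)$, i.e.\ an \emph{iterated} application of the decomposition function. For these to match the right side one needs the compatibility $g(x_ru_k)=u_r$ whenever $k\notin[\max(u_r)-1]\setminus\supp_{\bf t}(u_r)$ but $k\in\sigma$. This is the heart of the proof: one must argue, via a short case analysis on whether $k\ge\max(u_r)$ or $k\in\supp_{\bf t}(u_r)\setminus\supp_{\bf t}(u)$, that $u_r$ divides $x_ru_k$ and is the $>_{\textup{plex}}$-largest such generator. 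It is precisely here that the special form ${\bf t}=(1,\dots,1,0,\dots,0)$ and the choice of $g$ as a pure-lex maximum are used. Without this step your argument does not close; the vanishing convention $f(u_k;\tau)=0$ that you invoke handles only the bookkeeping of which terms are present, not why the surviving ones agree.
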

\begin{proof}
	Note that in the definition of $s_\tau$, $x_{\max(u)}$ always divide $v_k$. Indeed, if we let $k^{(u)}=\min\{j\in\supp(u):j>k\}$, then $w=x_k(u/x_{k^{(u)}})\in I$ is again a ${\bf t}$-spread monomial. By Lemma \ref{LemG(I)M(I)}, $w=w_1w_2$ with $w_1\in G(I)$ and $\max(w_1)\le\min(w_2)$. Consequently $\{y\in G(I):y\ \textup{divides}\ x_ku\}$ is non empty and $u_k$ exists. Proceeding as in the proof of Lemma \ref{LemG(I)M(I)} we see that $\max(u_k)\le\min(v_k)$. Finally, $v_k\ne 1$ otherwise $u_k=x_ku\in G(I)$, which is absurd. Hence $x_{\max(u)}$ divides $v_k$ as wanted.\smallskip
	
	We proceed by induction on $i$. The case $i=1$ is trivial. By induction, we can assume that the last formula for the differential $d_\ell$ holds for $\ell<i$. We need to verify the equations $\partial_{i-j}(g_j)=d_{j+1}(g_{j+1})$. For $j=0$ this is trivial. Let $j>0$.
	
	Firstly, we calculate $\partial_{i-j}(g_j)$. Since $|\sigma\setminus\tau|=i-j-1$, we have
	\begin{align*}
	&\partial_{i-j}(g_j)=(-1)^{i-j}\sum_{\substack{\tau\subset\sigma\\ |\tau|=j-1}} (-1)^{\gamma(\tau)}f(u;\tau)\otimes\big(\sum_{k\in\sigma\setminus\tau}(-1)^{\alpha(\sigma\setminus\tau;k)}x_k e_{\sigma\setminus(\tau\cup k)}\big)\\
	&+\sum_{\substack{\tau\subset\sigma\\ |\tau|=j}}(-1)^{\gamma(\tau)}s_\tau\otimes\big(\sum_{k\in\sigma\setminus\tau}(-1)^{\alpha(\sigma\setminus\tau;k)}x_k e_{\sigma\setminus(\tau\cup k)}\wedge e_{\max(u)}+(-1)^{i-j-1}x_{\max(u)}e_{\sigma\setminus\tau}\big).
	\end{align*}
	
	We suitably rewrite both sums.
	
	For the first sum, note that for $\tau\subseteq\sigma$, $|\tau|=j-1$ and $k\in\sigma\setminus\tau$, then setting $\rho=\tau\cup k$ we have that $|\rho|=j$, $\gamma(\tau)=\gamma(\rho\setminus k)=\sum_{s\in\sigma\setminus(\rho\cup k)}\alpha(\sigma;s)=\gamma(\rho)+\alpha(\sigma;k)$, $\alpha(\sigma;k)=\alpha(\sigma\setminus\tau;k)+\alpha(\tau;k)$ and also $\alpha(\tau;k)=\alpha(\rho;k)$ for it is $k\notin\tau$.
	Hence,
	$$
	(-1)^{\gamma(\tau)}(-1)^{\alpha(\sigma\setminus\tau;k)}=(-1)^{\gamma(\rho)+\alpha(\sigma\setminus\tau;k)+\alpha(\tau;k)}(-1)^{\alpha(\sigma\setminus\tau;k)}=(-1)^{\gamma(\rho)}(-1)^{\alpha(\rho;k)}.
	$$
	As $\tau\subseteq\sigma$, $|\tau|=j-1$ and $k\in\sigma\setminus\tau$ are arbitrary, $\rho=\tau\cup k\subseteq\sigma$ with $|\rho|=j$ is arbitrary too, thus the first sum of $\partial_{i-j}(g_j)$ can be rewritten as follows,
	\begin{align}
	\label{eq:MinRisVectSpread1}A&=(-1)^{i-j-1}\sum_{\substack{\rho\subseteq\sigma\\ |\rho|=j}}(-1)^{\gamma(\rho)}\big(\sum_{k\in\rho}(-1)^{\alpha(\rho;k)+1}x_kf(u;\rho\setminus k)\big)\otimes e_{\sigma\setminus\rho}.\\
	\intertext{Analogously, the second sum can be written as $B+C$, where}
	\label{eq:MinRisVectSpread2}B&=\sum_{\substack{\vartheta\subseteq\sigma\\ |\vartheta|=j+1}}(-1)^{\gamma(\vartheta)}\big(\sum_{k\in\vartheta}(-1)^{\alpha(\vartheta;k)}x_ks_{\vartheta\setminus k}\big)\otimes e_{\sigma\setminus\vartheta}\wedge e_{\max(u)},\\
	\label{eq:MinRisVectSpread3}C&=\phantom{\sum_{\substack{\rho\subseteq\sigma\\ |\rho|=j+1}}}(-1)^{i-j-1}\sum_{\substack{\rho\subseteq\sigma\\ |\rho|=j}}(-1)^{\gamma(\rho)}s_{\rho}\otimes x_{\max(u)}e_{\sigma\setminus\rho}.
	\end{align}
	
	Taking into account equations (\ref{eq:MinRisVectSpread1}), (\ref{eq:MinRisVectSpread3}), the inductive hypothesis and the definition of $s_\rho$ we have that
	\begin{align*}
	A+C&=(-1)^{i-(j+1)}\sum_{\substack{\rho\subset\sigma\\ |\rho|=j}}(-1)^{\gamma(\rho)}\big(\sum_{k\in\rho}(-1)^{\alpha(\rho;k)+1}x_kf(u;\rho\setminus k)+x_{\max(u)}s_\rho\big)\otimes e_{\sigma\setminus\rho}\\
	&=(-1)^{i-(j+1)}\sum_{\substack{\rho\subset\sigma\\ |\rho|=j}}(-1)^{\gamma(\rho)}d_{j+1}(f(u;\rho))\otimes e_{\sigma\setminus\rho}.
	\end{align*}
	Thus, by the structure of $g_{j+1}$, to complete our proof we need to prove that
	$$
	B=\sum_{\substack{\vartheta\subseteq\sigma\\ |\vartheta|=j+1}}(-1)^{\gamma(\vartheta)}d_{j+1}(s_{\vartheta})\otimes e_{\sigma\setminus\vartheta}\wedge e_{\max(u)}.
	$$
	That is, we have to prove
	\begin{align}
	\nonumber d_{j+1}(s_{\vartheta})&=\sum_{k\in\vartheta}(-1)^{\alpha(\vartheta;k)}x_ks_{\vartheta\setminus k}\\
	\label{eq:doublesumMinRisVectSpread}&=\sum_{k\in\vartheta}\sum_{r\in\vartheta\setminus k}(-1)^{\alpha(\vartheta;k)+\alpha(\vartheta\setminus k;r)}x_k\frac{v_r}{x_{\max(u)}}f(u_r;\vartheta\setminus\{k,r\}),
	\end{align}
	for all $\vartheta\subseteq\sigma$, $|\vartheta|=j+1$.\smallskip
	
	Since $j<i-1$, then $j+1<i$, and by inductive hypothesis,
	\begin{align*}
	d_{j+1}(s_{\vartheta})=&\sum_{r\in\vartheta}(-1)^{\alpha(\vartheta;r)}\frac{v_r}{x_{\max(u)}}\big(\sum_{k\in\vartheta\setminus r}(-1)^{\alpha(\vartheta\setminus r;k)}\times\\
	&\big(-x_kf(u_r;\vartheta\setminus\{k,r\})+\frac{x_ku_r}{g(x_ku_r)}f(g(x_ku_r);\vartheta\setminus\{k,r\})\big).
	\end{align*}
	For $u\in G(I)$ and $\tau\subseteq[n]$, we define
	$$
	\Gamma(u;\tau):=\big\{r\in\tau:\tau\setminus r\subseteq[\max(u_r)-1]\setminus\supp_{\bf t}(u_r)\big\},
	$$
	where $u_r=g(x_ru)$. Note that for $r\in\tau\setminus\Gamma(u;\tau)$, $f(u_r;\tau\setminus r)=0$.\medskip
	
	Now, for the first sum of terms of $d_{j+1}(s_\vartheta)$, note that $\alpha(\vartheta;r)=\alpha(\vartheta\setminus k;r)+\alpha(k;r)$, $\alpha(\vartheta;k)=\alpha(\vartheta\setminus r;k)+\alpha(r;k)$ and $\alpha(k;r)-\alpha(r;k)=1$ if $k<r$ or $-1$ if $k>r$. Thus,
	\begin{align*}
	(-1)^{\alpha(\vartheta;r)+\alpha(\vartheta\setminus r;k)+1}&=(-1)^{\alpha(\vartheta\setminus k;r)+\alpha(k;r)+\alpha(\vartheta;k)-\alpha(r;k)+1}=(-1)^{\alpha(\vartheta;k)+\alpha(\vartheta\setminus k;r)}.
	\end{align*}
	Taking into account this calculation and exchanging the indices $k$ with $r$ in the second sum of terms of $d_{j+1}(s_{\vartheta})$, we can write $d_{j+1}(s_\vartheta)$ as $B_1+B_2$, where
	\begin{align*}
	B_1&=\sum_{r\in\Gamma(u;\vartheta)}\sum_{k\in\vartheta\setminus r}(-1)^{\alpha(\vartheta;k)+\alpha(\vartheta\setminus k;r)}x_k\frac{v_r}{x_{\max(u)}}f(u_r;\vartheta\setminus\{k,r\}),\\
	B_2&=\sum_{\substack{k\in\Gamma(u;\vartheta)\\ r\in\Gamma(u_k;\vartheta\setminus k)}}(-1)^{\alpha(\vartheta;k)+\alpha(\vartheta\setminus k;r)}\frac{x_ru_kv_k}{g(x_ru_k)x_{\max(u)}}f(g(x_ru_k);\vartheta\setminus\{k,r\}).
	\end{align*}
	
	In all terms of the right--hand side in equation (\ref{eq:doublesumMinRisVectSpread}), for $k,r\in\vartheta$, $k\ne r$, we have either $r\in\Gamma(u;\vartheta)$ or $r\notin\Gamma(u;\vartheta)$ and $r\in\Gamma(u;\vartheta\setminus k)$. Let $B_3$ be the sum of terms such that $r\in\Gamma(u;\vartheta)$, and let $B_4$ be the sum of terms such that $r\notin\Gamma(u;\vartheta)$ and $r\in\Gamma(u;\vartheta\setminus k)$. To finish the proof, it is enough to show that $B_1=B_3$ and $B_2=B_4$.\medskip
	
	It is clear that $B_1=B_3$.\smallskip
	
	Let us see that $B_2=B_4$. The hypotheses $r\notin\Gamma(u;\vartheta)$ and $r\in\Gamma(u;\vartheta\setminus k)$ imply that $k\notin[\max(u_r)-1]\setminus\supp_{\bf t}(u_r)$, where $u_rv_r=x_ru$ and $\max(u_r)\le\min(v_r)$. But $k\in\vartheta\subseteq\sigma\subseteq[\max(u)-1]\setminus\supp_{\bf t}(u)$. Thus, either $k\in[\max(u_r),\max(u)-1]$ or $k\in\supp_{\bf t}(u_r)\setminus\supp_{\bf t}(u)$. We show in both cases that $g(x_ru_k)=u_r$.\medskip
	
	If $k\in[\max(u_r),\max(u)-1]$, then $k\ge\max(u_r)\ge r$, so $k>r$ since $k\ne r$. This implies that $r<k\le\max(u_k)$ too. Hence, $u_r$ divides $x_ru_k$. Finally, $g(x_ru_k)=u_r$.\smallskip
	
	If $k\in\supp_{\bf t}(u_r)\setminus\supp_{\bf t}(u)$, then $k>r$, and so $r<\max(u_k)$. Since $k\in\supp_{\bf t}(u_r)$ we have that $k<\max(u_r)$. Let us see that $\max(u_r)\le\max(u_k)$. Suppose on the contrary that $\max(u_r)>\max(u_k)$. If $u=x_{j_1}x_{j_2}\cdots x_{j_d}$, then $u_k=x_k\cdot x_{j_1}\cdots x_{j_p}$ and $u_r=x_r\cdot x_{j_1}\cdots x_{j_q}$ are both ${\bf t}$-spread monomials of $I$ with $p<q<d$. Then $x_r(u_k/x_k)$ is a ${\bf t}$-spread monomial of $I$ that divides $x_ru$ and $x_r(u_k/x_k)>_{\text{plex}}u_r$, an absurd. Hence $\max(u_r)\le\max(u_k)$, so $u_r$ divides $x_ru_k$ and again $g(x_ru_k)=u_r$.
	
	Thus, $g(x_ru_k)=u_r$ and
	\[\arraycolsep=1.4pt\def\arraystretch{2.2}
	\begin{array}{lrlrr}
	&\displaystyle\frac{x_ru_kv_k}{g(x_ru_k)x_{\max(u)}}&f(g(x_ru_k);\vartheta\setminus\{k,r\})&\phantom{aaaaa}&(\textup{as}\ u_kv_k=x_ku),\\
	=&\displaystyle\frac{x_rx_ku}{u_rx_{\max(u)}}&f(u_r;\vartheta\setminus\{k,r\})&\phantom{aaaaa}&(\textup{as}\ x_ru=u_rv_r),\\
	=&\displaystyle x_k\frac{v_r}{x_{\max(u)}}&f(u_r;\vartheta\setminus\{k,r\}).
	\end{array}
	\]
	This shows that $B_2=B_4$ and completes our proof.
\end{proof}

We consider the ideal in Example \ref{Ex:KoszVect2} and construct the differentials of its minimal free resolution. Note that in this case ${\bf t}=(1,0)$.
\begin{Expl}\label{Ex:KoszVect3}
	\rm Let $I\subset S=K[x_1,\dots,x_6]$ be the $(1,0)$-spread strongly stable ideal of Example \ref{Ex:KoszVect2} with minimal generating set $$G(I)=\big\{w_1=x_{1}x_{2},\,\,w_2=x_{1}x_{3},\,\,w_3=x_{1}x_{4}^2\big\}.$$
	By Example \ref{Ex:KoszVect2}, $\pd(S/I)=3$. Let
	$$
	\FF:0\rightarrow F_3\xrightarrow{\ d_3\ }F_2\xrightarrow{\ d_2\ }F_1\xrightarrow{\ d_1\ }F_0=S\xrightarrow{\ d_0\ }S/I\rightarrow0
	$$
	the minimal free resolution of $S/I$. We know that $d_0=\varepsilon:S\rightarrow S/I$ is the canonical map. We shall describe the differentials $d_1,d_2,d_3$ by appropriate monomial matrices.
	
	For $i=1,2,3$, the basis of the free $S$-modules $F_i=S\otimes_KH_i({\bf x})$ consists of
	$$
	f(w_j;\sigma)=(-1)^{(i-1)(i-2)/2}1\otimes[z(w_j;\sigma)],
	$$
	for $j=1,\dots,4$, $\sigma\subseteq[\max(w_j)-1]\setminus\supp_{\bf t}(w_j)$ and $|\sigma|=i-1$.\smallskip
	
	We introduce a natural order on the basis elements of $F_i$, as follows,
	$$
	f(w_i;\sigma)\succ f(w_j;\vartheta)\ \ \Longleftrightarrow\ \ i<j\ \ \textup{or}\ \ i=j\ \ \textup{and}\ \ e_{\sigma}>e_{\vartheta},
	$$
	where $e_{\sigma}>e_{\vartheta}$ with respect to the order on the wedge products defined in Section \ref{sec1}.
	
	For instance,
	\begin{equation}\label{eq:orderedBasisFiVectSpread}
	f(w_2;\{2\})\succ f(w_3;\{2\})\succ f(w_3;\{3\}).
	\end{equation}
	
	Then, $d_i$, $i=1,2,3$, may be represented by a matrix whose $j$th column is given by the components of $d_i(f_j)$ with respect to the ordered basis of $F_{i-1}$, where $f_j$ is the $j$th basis element of $F_i$ with respect to the order introduced.
	
	By equation (\ref{eq:diffVectSpreadRisMin}) we have that
	$$
	\begin{aligned}
	\FF:\,\,\,\,0\,\xrightarrow{}\,&\underset{}{F_3}\,\xrightarrow{
		\begin{pmatrix}
		-x_{4}^2\\
		x_{3}\\
		-x_{2}\\
		\end{pmatrix}}\,
	\underset{}{F_2}\,\xrightarrow{
		\begin{pmatrix}
		x_{3}^2&x_4^2&0\\
		-x_{2}&0&x_4^2\\
		0&-x_2&-x_3\\
		\end{pmatrix}}\\
	&\underset{}{F_1}\,\xrightarrow{
	\begin{pmatrix}
	x_1x_2&x_1x_3&x_2x_4^2\\
	\end{pmatrix}}
	\underset{}{F_0}
	\xrightarrow{
		\ d_0\
	}\,
	\underset{}{S/I}\,\rightarrow\,0.\\
	\end{aligned}
	$$
	
	For instance, taking into account the order given in (\ref{eq:orderedBasisFiVectSpread}), we have
	\begin{align*}
	d_3(f(w_3;\{2,3\}))\ =&\ \begin{pmatrix}
	-x_{4}^2\\
	x_{3}\\
	-x_{2}\\
	\end{pmatrix}\begin{pmatrix}
	1
	\end{pmatrix}=-x_4^2f(w_2;\{2\})+x_3f(w_3;\{2\})-x_2f(w_3;\{3\}),\\[3pt]
	d_2d_3(f(w_3;\{2,3\}))\ =&\ -x_4^2\big(-x_2f(w_2;\emptyset)+x_3f(w_1;\emptyset)\big)\\
	&\ +x_3\big(-x_2f(w_3;\emptyset)+x_4^2f(w_1;\emptyset)\big)\\
	&\ -x_2\big(-x_3f(w_3;\emptyset)+x_4^2f(w_2;\emptyset)\big)\ =\ 0.
	\end{align*}
\end{Expl}

\section{Generalized Algebraic Shifting theory}\label{sec5}

In this final Section, we extend algebraic shifting theory to vector-spread strongly stable ideals. From now on, $K$ is a field of characteristic zero. We recall that by the symbol $\Gin(I)$ we mean the \textit{generic initial ideal} of a monomial ideal $I\subset S$, with respect to the reverse lexicographic order, with $x_1>x_2>\cdots>x_n$ \cite{JT}. It is known that $\Gin(I)$ is a (${\bf 0}$-spread) strongly stable ideal.\smallskip

Firstly, we need some notions.

Let ${\bf t},{\bf s}\in\ZZ_{\ge0}^{d-1}$, ${\bf t}=(t_1,\dots,t_{d-1})$, ${\bf s}=(s_1,\dots,s_{d-1})$, with $d\ge 2$. We can transform any ${\bf t}$-spread monomial ideal into a ${\bf s}$-spread monomial ideal as follows: Let ${\bf 0}\in\ZZ_{\ge0}^{d-1}$ be the null vector with $d-1$ components. To denote the composition of functions
$$
\Mon(T;{\bf t})\xrightarrow{\sigma_{{\bf t},{\bf 0}}}\Mon(T;{\bf 0})\xrightarrow{\sigma_{{\bf 0},{\bf s}}}\Mon(T;{\bf s})
$$
we use the symbol $\sigma_{{\bf t},{\bf s}}$, where $T=K[x_1,x_2,\dots,x_n,\dots]$. Note that 
$\sigma_{{\bf t},{\bf s}}(1)=1$, $\sigma_{{\bf t},{\bf s}}(x_i)=x_i$, and for all monomials $u=x_{j_1}x_{j_2}\cdots x_{j_{\ell}}\in\Mon(T;{\bf t})$, $2\le\ell\le d$,
\begin{align*}
\sigma_{{\bf t},{\bf s}}(x_{j_1}x_{j_2}\cdots x_{j_{\ell}})=\textstyle\prod\limits_{k=1}^{\ell}x_{j_k-\sum_{r=1}^{k-1}t_r+\sum_{r=1}^{k-1}s_r}.
\end{align*}

Finally, for $I$ a ${\bf t}$-spread monomial ideal, we let $I^{\sigma_{{\bf t},{\bf s}}}$ the monomial ideal whose minimal generating set is $G(I^{\sigma_{{\bf t},{\bf s}}}):=\big\{\sigma_{{\bf t},{\bf s}}(u):u\in G(I)\big\}$. Note that $I^{\sigma_{{\bf t},{\bf s}}}=(I^{\sigma_{{\bf t},{\bf 0}}})^{\sigma_{{\bf 0},{\bf s}}}$.\medskip

As mentioned in the Introduction, we define the \textit{${\bf t}$-spread algebraic shifting} as follows: for $I$ a monomial ideal of $T$, we let $I^{s,{\bf t}}$ the following monomial ideal
$$
I^{s,{\bf t}}:=(\Gin(I))^{\sigma_{{\bf 0},{\bf t}}}.
$$
Note that for ${\bf t}={\bf 1}=(1,1,\dots,1)$, we obtain the classical algebraic shifting. Indeed, for ${\bf t}={\bf 1}$, $\sigma_{{\bf 0},{\bf t}}$ is the \textit{squarefree operator} defined in the Introduction.\smallskip

We are going to verify the following four properties:
\begin{enumerate}
	\item[]
	\begin{enumerate}
	\item[$(\textup{Shift}_1)$] $I^{s,{\bf t}}$ is a ${\bf t}$-spread strongly stable monomial ideal;
	\item[$(\textup{Shift}_2)$] $I^{s,{\bf t}}=I$ if $I$ is a ${\bf t}$-spread strongly stable ideal;
	\item[$(\textup{Shift}_3)$] $I$ and $I^{s,{\bf t}}$ have the same Hilbert function;
	\item[$(\textup{Shift}_4)$] If $I\subseteq J$, then $I^{s,{\bf t}}\subseteq J^{s,{\bf t}}$.
	\end{enumerate}
\end{enumerate}

\begin{Prop}\label{VectTiffVect0}
	Let $I$ be a monomial ideal. Then, $I$ is a ${\bf t}$-spread strongly stable ideal if and only if $I^{\sigma_{{\bf t},{\bf s}}}$ is a ${\bf s}$-spread strongly stable ideal.
\end{Prop}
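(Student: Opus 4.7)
The plan is to reinterpret an ${\bf s}$-spread strongly stable move on $w=\sigma_{{\bf t},{\bf s}}(u)$ as a ``coordinatewise decrease'' move on $u$ itself, and then reduce the statement to a lemma about coordinatewise domination among ${\bf t}$-spread monomials.

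I would first establish the following preliminary lemma: \emph{let $u=x_{j_1}\cdots x_{j_\ell}$ and $v=x_{k_1}\cdots x_{k_\ell}$ be ${\bf t}$-spread monomials of the same degree, written in sorted form $j_1\le\cdots\le j_\ell$, $k_1\le\cdots\le k_\ell$, with $k_r\le j_r$ for every $r$; if $I$ is a ${\bf t}$-spread strongly stable ideal containing $u$, then $v\in I$.} The argument is by induction on $N:=\sum_r(j_r-k_r)$. If $N=0$ then $u=v$. Otherwise let $p$ be the smallest index with $j_p>k_p$, so $j_r=k_r$ for $r<p$, and consider $u^\ast:=x_{k_p}(u/x_{j_p})$, whose sorted sequence is $(k_1,\ldots,k_{p-1},k_p,j_{p+1},\ldots,j_\ell)$. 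This is ${\bf t}$-spread: at the $(p{-}1,p)$ boundary one has $k_p-k_{p-1}\ge t_{p-1}$ since $v$ is ${\bf t}$-spread, and (when $p<\ell$) at the $(p,p{+}1)$ boundary $j_{p+1}-k_p>j_{p+1}-j_p\ge t_p$. Hence $u^\ast\in I$ by strong stability of $I$, and since $u^\ast\ge v$ coordinatewise with smaller $N$, the induction concludes.

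For the forward implication, assume $I$ is ${\bf t}$-spread strongly stable. The ideal $I^{\sigma_{{\bf t},{\bf s}}}$ is ${\bf s}$-spread by construction, so by Corollary~\ref{cor:G(I)VectSpreadSS} it suffices to check, for each $u\in G(I)$ with $w:=\sigma_{{\bf t},{\bf s}}(u)$, each $i'\in\supp(w)$ and each $j'<i'$ with $w':=x_{j'}(w/x_{i'})$ being ${\bf s}$-spread, that $w'\in I^{\sigma_{{\bf t},{\bf s}}}$. Setting $\Delta_k:=\sum_{r=1}^{k-1}(s_r-t_r)$, the monomial $w$ has sorted indices $j_r+\Delta_r$; the move deletes the variable at some sorted position $p$ of $w$ and reinserts $x_{j'}$ at some position $q\le p$ (the smallest $k$ with $j_k+\Delta_k>j'$). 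I would then compute the sorted indices of $v:=\sigma_{{\bf s},{\bf t}}(w')$ position by position: $v_r=j_r$ for $r<q$ and $r>p$; $v_q=j'-\Delta_q$; and $v_r=j_{r-1}-(s_{r-1}-t_{r-1})$ for $q<r\le p$. Comparing with $u_r=j_r$: at position $q$ one has $v_q<j_q$ from $j'<j_q+\Delta_q$; at $q<r\le p$ the ${\bf t}$-spread inequality $j_r-j_{r-1}\ge t_{r-1}$ combined with $s_{r-1}\ge 0$ gives $v_r\le j_r$. Hence $v\le u$ coordinatewise; the preliminary lemma yields $v\in I$, and therefore $w'=\sigma_{{\bf t},{\bf s}}(v)\in I^{\sigma_{{\bf t},{\bf s}}}$.

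For the reverse implication, I would exploit that $\sigma_{{\bf t},{\bf s}}$ and $\sigma_{{\bf s},{\bf t}}$ are mutually inverse bijections on spread monomials, whence $(I^{\sigma_{{\bf t},{\bf s}}})^{\sigma_{{\bf s},{\bf t}}}=I$, and simply apply the forward implication with the roles of ${\bf t}$ and ${\bf s}$ interchanged. The main obstacle is the bookkeeping for the computation of $v=\sigma_{{\bf s},{\bf t}}(w')$: one must correctly locate the insertion position $q$ of $j'$ in the sorted order of $w'$ and track how the shift coefficients $\Delta_k$ rearrange among positions $q,q{+}1,\ldots,p$. Once these coordinatewise identities are in hand, the preliminary lemma closes the argument cleanly.
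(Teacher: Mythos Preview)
Your argument is correct and follows essentially the same strategy as the paper's: pull the ${\bf s}$-spread move on $w$ back through $\sigma_{{\bf s},{\bf t}}$ to obtain a ${\bf t}$-spread monomial $v$, then reach $v$ from $u$ inside $I$ by iterated strongly stable moves. Your coordinatewise-domination lemma is simply a cleaner packaging of what the paper does via the explicit chain $z_1,\dots,z_{\ell+1-p}$ of single moves, and your index bookkeeping for $v=\sigma_{{\bf s},{\bf t}}(w')$ matches the paper's (with your $q,p$ playing the role of the paper's $p,\ell$). The one step you leave implicit is the passage from $v\in I$ to $w'=\sigma_{{\bf t},{\bf s}}(v)\in I^{\sigma_{{\bf t},{\bf s}}}$: since $v$ need not lie in $G(I)$, you must invoke Lemma~\ref{LemG(I)M(I)} to write $v=v_1v_2$ with $v_1\in G(I)$ an initial segment, so that $\sigma_{{\bf t},{\bf s}}(v_1)\in G(I^{\sigma_{{\bf t},{\bf s}}})$ divides $\sigma_{{\bf t},{\bf s}}(v)=w'$; the paper makes this explicit.
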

\begin{proof}
	Suppose that $I$ is a ${\bf t}$-spread strongly stable ideal. Set $I'=I^{\sigma_{{\bf t},{\bf s}}}$. To show that $I'$ is a ${\bf s}$-spread strongly stable ideal, it suffices to check condition (ii) of Corollary \ref{cor:G(I)VectSpreadSS}. So, let $u\in G(I)$, $u=x_{j_1}x_{j_2}\cdots x_{j_d}$, then
	$$
	u_1=\sigma_{{\bf t},{\bf s}}(u)=\prod_{k=1}^{d}x_{j_k-\sum_{r=1}^{k-1}t_r+\sum_{r=1}^{k-1}s_r}=x_{j_1'}x_{j_2'}\cdots x_{j_d'}\in G(I')=G(I^{\sigma_{{\bf t},{\bf s}}}).
	$$
	Let $i\in\supp(u_1)$, $j<i$ such that $v_1=x_j(u_1/x_i)$ is ${\bf s}$-spread, we prove that $v_1\in I'$.
	
	Now, $i=j_\ell'=j_\ell-\sum_{r=1}^{\ell-1}t_r+\sum_{r=1}^{\ell-1}s_r$, for some $\ell\in\{1,\dots,d\}$, and $j_{p-1}'+s_{p-1}\le j\le j_p'-1$, for some $p\le\ell$, in particular for $p=1$, $j<j_1'$. Hence,
	$$
	v_1=x_j(u_1/x_i)=\big(\prod_{k=1}^{p-1}x_{j_k'}\big)x_j\big(\prod_{k=p}^{\ell-1}x_{j_k'}\big)\big(\prod_{k=\ell+1}^d x_{j_k'}\big).
	$$
	Recall that $\sigma_{{\bf s},{\bf t}}$ is the inverse map of $\sigma_{{\bf t},{\bf s}}$. Set $v=\sigma_{{\bf s},{\bf t}}(v_1)$, then $v$ is ${\bf t}$-spread, and
	$$
	v=\sigma_{{\bf s},{\bf t}}(v_1)=\big(\prod_{k=1}^{p-1}x_{j_k}\big)x_{j-\sum_{r=1}^{p-1}s_r+\sum_{r=1}^{p-1}t_r}\big(\prod_{k=p}^{\ell-1}x_{j_k-s_k+t_k}\big)\big(\prod_{k=\ell+1}^d x_{j_k}\big).
	$$
	Since $j_{k+1}-j_k\ge t_k$ for all $k$ and $j_p'=j_p-\textstyle\sum_{r=1}^{p-1}t_r+\textstyle\sum_{r=1}^{p-1}s_r$, we have
	\begin{align}
	\label{i<jVect1} j_k-s_k+t_k\le j_{k+1}-s_k\ &\le\ j_{k+1},\ \ \text{for all}\ k=p,\dots,\ell-1,\ \text{and}\\
	\label{i<jVect2} j-\textstyle\sum_{r=1}^{p-1}s_r+\textstyle\sum_{r=1}^{p-1}t_r\ &<\ j_p'-\textstyle\sum_{r=1}^{p-1}s_r+\textstyle\sum_{r=1}^{p-1}t_r=j_p.
	\end{align}
	Setting 
	\begin{align*}
	z_m\ &=\ \begin{cases}
	x_{j-\sum_{r=1}^{p-1}s_r+\sum_{r=1}^{p-1}t_r}(u/x_{j_p}),&\text{for}\ m=1,\\
	x_{j_{(p+m-2)}-s_{(p+m-2)}+t_{(p+m-2)}}(z_{m-1}/x_{j_{(p+m-1)}}),&\text{for}\ m=2,\dots,\ell+1-p,
	\end{cases}
	\end{align*}
	we see that the monomials $z_m$ are ${\bf t}$-spread. Moreover, as $I$ is ${\bf t}$-spread strongly stable, $z_1\in I$ by (\ref{i<jVect2}), and inductively $z_m\in I$, by (\ref{i<jVect1}). So, $v=z_{\ell+1-p}\in I$, and by Lemma \ref{LemG(I)M(I)}, $v=w_1w_2$ for unique monomials $w_1\in G(I)$, $w_2$ such that $\max(w_1)\le\min(w_2)$. Hence, $\sigma_{{\bf t},{\bf s}}(w_1)$  divides $v_1=\sigma_{{\bf t},{\bf s}}(v)$, with $\sigma_{{\bf t},{\bf s}}(w_1)\in G(I')=G(I^{\sigma_{{\bf t},{\bf s}}})$. Finally, $v_1\in I'=I^{\sigma_{{\bf t},{\bf s}}}$, as desired. The converse is trivially true as $I,{\bf t},{\bf s}$ are arbitrary.
\end{proof}

By virtue of this proposition, the property $(\textup{Shift}_1)$ is verified. Indeed, it is known that $\Gin(I)$ is a ${\bf 0}$-spread strongly stable ideal \cite{JT}. Consequently, $I^{s,{\bf t}}$ is a ${\bf t}$-spread strongly stable ideal, as desired.

The operators $\sigma_{{\bf t},{\bf s}}$ behave well, in fact they preserve the graded Betti numbers. We first note that Theorem \ref{TeorTSpreadStronlgyStableBetti} implies a formula for the graded Betti numbers. We remark that the next result holds whatever the characteristic of the field $K$ is.

\begin{Cor}\label{Cor:BettiNumbFormulaVectSpread}
	Let $I$ be a ${\bf t}$-spread strongly stable ideal of $S$. Then,
	\begin{equation}\label{eq:FormulaBettiVectSpread}
	\beta_{i,i+j}(I)=\sum_{u\in G(I)_j}\binom{\max(u)-1-\sum_{\ell=1}^{j-1}t_\ell}{i},\ \ \ \text{for all}\ \ i,j\ge0.
	\end{equation}
	In particular, the graded Betti numbers of a vector-spread strongly stable ideal $I\subset S$ do not depend upon the characteristic of the field $K$.
\end{Cor}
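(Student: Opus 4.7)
The plan is to read off the formula directly from the explicit basis provided by Theorem \ref{TeorTSpreadStronlgyStableBetti} together with the isomorphism (\ref{eq:betaIS/I}). First, I would rewrite the desired Betti number as
$\beta_{i,i+j}(I)=\beta_{i+1,i+j}(S/I)=\dim_K H_{i+1}({\bf x};S/I)_{i+j}$,
and observe that each basis cycle $e(u;\sigma)$ furnished by Theorem \ref{TeorTSpreadStronlgyStableBetti} is homogeneous of graded degree $\deg(u)+|\sigma|$, as can be seen from its leading summand $\varepsilon(u')e_{\sigma}\wedge e_{\max(u)}$. Hence in homological degree $i+1$ and graded degree $i+j$ the basis is indexed exactly by pairs $(u,\sigma)$ with $u\in G(I)_j$, $\sigma\subseteq[\max(u)-1]\setminus\supp_{\bf t}(u)$ and $|\sigma|=i$.

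The second step is a purely combinatorial count of the cardinality of $[\max(u)-1]\setminus\supp_{\bf t}(u)$. Writing $u=x_{j_1}x_{j_2}\cdots x_{j_j}\in G(I)_j$, the ${\bf t}$-spread condition $j_{k+1}-j_k\ge t_k$ gives two facts at once: the intervals $[j_k,j_k+(t_k-1)]$ for $k=1,\dots,j-1$ are pairwise disjoint, and each is contained in $[1,\max(u)-1]=[1,j_j-1]$ (the last one since $j_{j-1}+(t_{j-1}-1)\le j_j-1$). With the convention $[j_k,j_k-1]=\emptyset$ when $t_k=0$, the $k$th interval has cardinality $t_k$, so $|\supp_{\bf t}(u)|=\sum_{k=1}^{j-1}t_k$ and therefore
$$
\big|[\max(u)-1]\setminus\supp_{\bf t}(u)\big|\ =\ \max(u)-1-\textstyle\sum_{k=1}^{j-1}t_k.
$$
The number of size-$i$ subsets $\sigma$ is then $\binom{\max(u)-1-\sum_{k=1}^{j-1}t_k}{i}$, and summing over $u\in G(I)_j$ gives formula (\ref{eq:FormulaBettiVectSpread}).

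Finally, for the independence of the characteristic it suffices to note that the right-hand side of (\ref{eq:FormulaBettiVectSpread}) depends only on the combinatorial datum $(G(I),{\bf t})$, not on $K$; equivalently, Theorem \ref{TeorTSpreadStronlgyStableBetti} exhibits a $K$-basis of $H_i({\bf x};S/I)$ whose size is computed above irrespective of $\textup{char}(K)$.

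There is essentially no obstacle: all the real work was done in Theorem \ref{TeorTSpreadStronlgyStableBetti}. The only point requiring care is the disjointness and containment of the intervals making up $\supp_{\bf t}(u)$, which follows cleanly from the defining inequalities of a ${\bf t}$-spread monomial.
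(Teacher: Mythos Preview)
Your proof is correct and follows the same approach as the paper: both invoke (\ref{eq:betaIS/I}) to pass to $H_{i+1}({\bf x};S/I)_{i+j}$, identify via Theorem \ref{TeorTSpreadStronlgyStableBetti} the basis elements of that degree as those with $u\in G(I)_j$ and $|\sigma|=i$, and then count the admissible $\sigma$'s. You actually supply more detail than the paper does on why $|\supp_{\bf t}(u)|=\sum_{\ell=1}^{j-1}t_\ell$ (the disjointness and containment of the intervals), which the paper simply asserts.
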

\begin{proof}
	Let $i,j\ge0$. By equation (\ref{eq:betaIS/I}), $\beta_{i,i+j}(I)=\beta_{i+1,i+j}(S/I)=\dim_KH_{i+1}({\bf x};S/I)_{i+j}$. By Theorem \ref{TeorTSpreadStronlgyStableBetti}, the degree of a basis element $\big[e(u;\sigma)\big]$ of $H_{i+1}({\bf x};S/I)_{i+j}$ is given by $|\sigma|+1+\deg(u)-1=i+j$. Thus $u\in G(I)_j$. For a fixed $u\in G(I)_j$, we have $\sigma\subseteq[\max(u)-1]\setminus\supp_{\bf t}(u)$. Hence, there are $\binom{|[\max(u)-1]\setminus\supp_{\bf t}(u)|}{i}=\binom{\max(u)-1-\sum_{\ell=1}^{j-1}t_\ell}{i}$ possible choices for $\sigma$. Summing all these binomials over $u\in G(I)_j$, we obtain the formula in the statement.
\end{proof}

Suitable choices of ${\bf t}$ return several well known formulas for the graded Betti numbers. ${\bf t}=(0,0,\dots,0)$ returns the Eliahou--Kervaire formula for (strongly) stable ideals \cite{EK}; ${\bf t}=(1,1,\dots,1)$ gives the Aramova--Herzog--Hibi formula for squarefree (strongly) stable ideals \cite{AHH2}. Finally, in the \textit{uniform} case, \emph{i.e.}, ${\bf t}=(t,t,\dots,t)$, we have the Ene--Herzog--Qureshi formula for uniform $t$-spread strongly stable ideals \cite{EHQ}.\medskip

Let $P_I^S(y)=\sum_i\beta_i(I)y^i$ be the Poincar\'e series of $I$. Equation (\ref{eq:FormulaBettiVectSpread}) implies
\begin{Cor}
	Let $I\subset S$ be a ${\bf t}$-spread strongly stable ideal. Then
	\begin{enumerate}
		\item[\textup{(a)}] $P_I^S(y)=\sum_{u\in G(I)}(1+y)^{\max(u)-1-\sum_{\ell=1}^{\deg(u)-1}t_\ell}$;
		\item[\textup{(b)}] $\pd(I)=\max\big\{\max(u)-1-\textstyle\sum_{j=1}^{\deg(u)-1}t_j:u\in G(I)\big\}$;
		\item[\textup{(c)}] $\reg(I)=\max\big\{\deg(u):u\in G(I)\big\}$.
	\end{enumerate}
\end{Cor}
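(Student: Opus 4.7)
The proof is essentially a direct unpacking of the Betti number formula in Corollary \ref{Cor:BettiNumbFormulaVectSpread}, so the plan is to reduce each of (a), (b), (c) to a manipulation of that formula together with the defining identities
\[
P_I^S(y)=\sum_i\beta_i(I)y^i=\sum_{i,j}\beta_{i,i+j}(I)y^i,\qquad \pd(I)=\max\{i:\beta_i(I)\ne0\},\qquad \reg(I)=\max\{j:\beta_{i,i+j}(I)\ne0\ \text{for some}\ i\}.
\]

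For (a), I would substitute the formula of Corollary \ref{Cor:BettiNumbFormulaVectSpread} into $P_I^S(y)$ and swap the order of summation:
\[
P_I^S(y)\ =\ \sum_{j\ge0}\sum_{u\in G(I)_j}\sum_{i\ge0}\binom{\max(u)-1-\sum_{\ell=1}^{j-1}t_\ell}{i}y^i\ =\ \sum_{u\in G(I)}(1+y)^{\max(u)-1-\sum_{\ell=1}^{\deg(u)-1}t_\ell},
\]
using the binomial identity $\sum_{i}\binom{N}{i}y^i=(1+y)^N$ for $N\in\ZZ_{\ge0}$. The only small check is that the exponent is indeed a nonnegative integer, and this follows from the ${\bf t}$-spread condition: any ${\bf t}$-spread monomial $u$ of degree $j$ with indices $j_1\le\cdots\le j_j$ satisfies $\max(u)=j_j\ge 1+\sum_{\ell=1}^{j-1}t_\ell$.

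For (b), I would observe that $\beta_i(I)\ne0$ iff $\binom{\max(u)-1-\sum_{\ell=1}^{\deg(u)-1}t_\ell}{i}\ne 0$ for at least one $u\in G(I)$, which happens iff $i\le\max(u)-1-\sum_{\ell=1}^{\deg(u)-1}t_\ell$ for some $u$. Taking the maximum such $i$ gives the claimed formula for $\pd(I)$.

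For (c), the lower bound $\reg(I)\ge\max\{\deg(u):u\in G(I)\}$ comes from taking $i=0$ and $u\in G(I)$ of maximal degree $j=\deg(u)$: the Betti formula gives $\beta_{0,j}(I)\ge\binom{\max(u)-1-\sum_{\ell=1}^{j-1}t_\ell}{0}=1$ (again valid by the nonnegativity check above). The upper bound is immediate: if $j>\max\{\deg(u):u\in G(I)\}$ then $G(I)_j=\emptyset$ and the formula in Corollary \ref{Cor:BettiNumbFormulaVectSpread} gives $\beta_{i,i+j}(I)=0$ for all $i$. There is no real obstacle here; the only thing worth being careful about is the nonnegativity of the exponent $\max(u)-1-\sum_{\ell=1}^{\deg(u)-1}t_\ell$, which is where the ${\bf t}$-spread hypothesis on $G(I)$ is used.
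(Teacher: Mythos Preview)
Your proposal is correct and is exactly the intended argument: the paper does not spell out a proof but simply states that the corollary is implied by equation~(\ref{eq:FormulaBettiVectSpread}), and your derivation of (a)--(c) from that formula, including the nonnegativity check $\max(u)-1-\sum_{\ell=1}^{\deg(u)-1}t_\ell\ge0$ via the ${\bf t}$-spread condition, fills in precisely those details.
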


Let us return now to our shifting operators. As announced, we have
\begin{Lem}\label{lemmaSigmaConserva}
	Let $I$ be a ${\bf t}$-spread strongly stable ideal. Then $I^{\sigma_{{\bf t},{\bf s}}}$ is a ${\bf s}$-spread strongly stable ideal, and for all $i,j\ge0$,
	$$
	\beta_{i,i+j}(I)=\beta_{i,i+j}(I^{\sigma_{{\bf t},{\bf s}}}).
	$$
\end{Lem}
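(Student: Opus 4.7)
The first assertion is immediate: Proposition \ref{VectTiffVect0} already establishes that $I$ being ${\bf t}$-spread strongly stable is equivalent to $I^{\sigma_{{\bf t},{\bf s}}}$ being ${\bf s}$-spread strongly stable, so no further work is needed there. The substantive content is the invariance of the graded Betti numbers under $\sigma_{{\bf t},{\bf s}}$, and the plan is to reduce this entirely to the closed formula in Corollary \ref{Cor:BettiNumbFormulaVectSpread}.

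The strategy is to compare the two summations term-by-term via the bijection $G(I)_j \longleftrightarrow G(I^{\sigma_{{\bf t},{\bf s}}})_j$ given by $u \mapsto \sigma_{{\bf t},{\bf s}}(u)$. Since $\sigma_{{\bf t},{\bf s}}$ is degree-preserving, it sends $G(I)_j$ bijectively to $G(I^{\sigma_{{\bf t},{\bf s}}})_j$. The key computation is then the behavior of $\max$ under the operator: if $u = x_{j_1}x_{j_2}\cdots x_{j_j} \in G(I)_j$, then by definition
\[
\sigma_{{\bf t},{\bf s}}(u) \;=\; \prod_{k=1}^{j} x_{j_k - \sum_{r=1}^{k-1}t_r + \sum_{r=1}^{k-1}s_r},
\]
whence $\max(\sigma_{{\bf t},{\bf s}}(u)) = \max(u) - \sum_{r=1}^{j-1}t_r + \sum_{r=1}^{j-1}s_r$.

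Substituting this into the Betti formula of Corollary \ref{Cor:BettiNumbFormulaVectSpread} applied to the ${\bf s}$-spread strongly stable ideal $I^{\sigma_{{\bf t},{\bf s}}}$, the relevant binomial for $\sigma_{{\bf t},{\bf s}}(u)$ becomes
\[
\binom{\max(\sigma_{{\bf t},{\bf s}}(u)) - 1 - \sum_{\ell=1}^{j-1} s_\ell}{i} \;=\; \binom{\max(u) - 1 - \sum_{r=1}^{j-1} t_r}{i},
\]
which is exactly the binomial contributed by $u$ on the $I$-side. Summing over $u \in G(I)_j$ on both sides yields $\beta_{i,i+j}(I) = \beta_{i,i+j}(I^{\sigma_{{\bf t},{\bf s}}})$.

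There is essentially no obstacle here; the whole argument is a bookkeeping exercise once Corollary \ref{Cor:BettiNumbFormulaVectSpread} is in hand. The only point that deserves a brief remark in the write-up is that both $I$ and $I^{\sigma_{{\bf t},{\bf s}}}$ satisfy the hypotheses of that corollary (the former by assumption, the latter by Proposition \ref{VectTiffVect0}), so the formula is legitimately applicable on both sides.
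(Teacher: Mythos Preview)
Your proof is correct and follows essentially the same approach as the paper: invoke Proposition \ref{VectTiffVect0} for the strongly stable claim, compute $\max(\sigma_{{\bf t},{\bf s}}(u))$ explicitly, and substitute into the formula of Corollary \ref{Cor:BettiNumbFormulaVectSpread} on both sides to see that the binomials match term-by-term under the bijection $u\mapsto\sigma_{{\bf t},{\bf s}}(u)$. The paper's proof is identical in content and structure.
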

\begin{proof}
	We have just proved that $I^{\sigma_{{\bf t},{\bf s}}}$ is a ${\bf s}$-spread strongly stable ideal with minimal generating set $G(I^{\sigma_{{\bf t},{\bf s}}})=\{{\sigma_{{\bf t},{\bf s}}}(u):u\in G(I)\}$. Moreover, for $u\in G(I)$, we have $\max(\sigma_{{\bf t},{\bf s}}(u))=\max(u)-\sum_{\ell=1}^{\deg(u)-1}t_\ell+\sum_{\ell=1}^{\deg(u)-1}s_\ell$. Hence, Corollary \ref{Cor:BettiNumbFormulaVectSpread} yields
	\begin{align*}
	\beta_{i,i+j}(I^{\sigma_{{\bf t},{\bf s}}})\ &=\ \sum_{\sigma_{{\bf t},{\bf s}}(u)\in G(I^{\sigma_{{\bf t},{\bf s}}})_j}\binom{\max({\sigma_{{\bf t},{\bf s}}}(u))-1-\sum_{\ell=1}^{j-1}s_\ell}{i}\\
	&=\ \sum_{u\in G(I)_j}\binom{\max(u)-\sum_{\ell=1}^{j-1}t_\ell+\sum_{\ell=1}^{j-1}s_\ell-1-\sum_{\ell=1}^{j-1}s_\ell}{i}\\
	&=\ \sum_{u\in G(I)_j}\binom{\max(u)-1-\sum_{\ell=1}^{j-1}t_\ell}{i}\ =\ \beta_{i,i+j}(I).
	\end{align*}
\end{proof}

As a consequence, the property $(\textup{Shift}_3)$ is verified too. Indeed, it is known that $I$ and $\Gin(I)$ have the same Hilbert function. Moreover, by Lemma \ref{lemmaSigmaConserva}, $\Gin(I)$ and $(\Gin(I))^{\sigma_{{\bf 0},{\bf t}}}$ have the same graded Betti numbers and thus the same Hilbert function.\smallskip

Note that condition $(\textup{Shift}_4)$ is trivially verified. Finally it remains to establish condition $(\textup{Shift}_2)$. This is accomplished in the next theorem.

\begin{Thm}\label{GinVectSpread}
	Let $K$ be a field of characteristic zero. Let $I\subset S$ be a ${\bf t}$-spread strongly stable ideal. Then
	$$
	I=(\Gin(I))^{\sigma_{{\bf 0},{\bf t}}}.
	$$
\end{Thm}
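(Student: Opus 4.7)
Applying the bijection $\sigma_{{\bf t},{\bf 0}}$ to both sides of the proposed identity $I = (\Gin(I))^{\sigma_{{\bf 0},{\bf t}}}$, and using that $\sigma_{{\bf t},{\bf 0}}\circ\sigma_{{\bf 0},{\bf t}}$ is the identity on ${\bf 0}$-spread monomial ideals (as the corresponding composition on monomials is $\sigma_{{\bf 0},{\bf 0}}=\id$), the theorem is equivalent to the cleaner assertion
$$\Gin(I) \;=\; I^{\sigma_{{\bf t},{\bf 0}}}.$$
I plan to prove this reformulation. Set $J := I^{\sigma_{{\bf t},{\bf 0}}}$.

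By Proposition \ref{VectTiffVect0}, $J$ is a (${\bf 0}$-spread) strongly stable monomial ideal of $S$. Since $K$ has characteristic zero, every strongly stable ideal is Borel-fixed and thus coincides with its own generic initial ideal; in particular $\Gin(J)=J$. Next, Lemma \ref{lemmaSigmaConserva} (applied with ${\bf s}={\bf 0}$) gives $\beta_{i,i+j}(I)=\beta_{i,i+j}(J)$ for all $i,j$, so $I$ and $J$ have the same Hilbert function. Combined with the standard fact that $I$ and $\Gin(I)$ share the Hilbert function, this shows that $\Gin(I)$ and $J$ are two (${\bf 0}$-spread) strongly stable ideals in $S$ with a common Hilbert function.

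The remaining task is to upgrade this numerical coincidence to the equality $\Gin(I)=J$. My plan is to verify the inclusion $J \subseteq \Gin(I)$: combined with the equality of Hilbert functions, a containment of graded ideals that is an equality of dimensions in every degree must be an equality of ideals. The inclusion amounts to showing that, for each $u\in G(I)$, the monomial $\sigma_{{\bf t},{\bf 0}}(u)$ lies in $\Gin(I)$; equivalently, that for a generic $g\in\mathrm{GL}_n(K)$ there is an element of $g\cdot I$ whose reverse-lexicographic leading term is $\sigma_{{\bf t},{\bf 0}}(u)$. The natural candidate is $g\cdot u$ itself: exploiting the ${\bf t}$-spread structure of $u$ (which prescribes exactly the gaps among the indices appearing in $u$) together with the fact that the reverse-lex order favours monomials concentrated in the smallest-indexed variables, one identifies $\sigma_{{\bf t},{\bf 0}}(u)$ as the unique surviving (non-cancelling) leading monomial of $g\cdot u$.

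The main obstacle is precisely this last identification. A generic coordinate change mixes all variables and many cancellations can in principle occur, so the identification of the leading term demands a combinatorial argument finely tailored to the ${\bf t}$-spread strong stability hypothesis. A potentially cleaner alternative would be induction on $|{\bf t}|=t_1+\cdots+t_{d-1}$: the base case ${\bf t}={\bf 0}$ is the classical fact that in characteristic zero a strongly stable ideal equals its own $\Gin$, and each inductive step decrements a single component of ${\bf t}$ by one, mimicking the squarefree-shifting argument of Aramova--Herzog--Hibi \cite{AHH2} as the prototype in the unit-increment case. Either route exploits the ${\bf t}$-spread strong stability of $I$ in an essential way, since, as observed at the start of Section \ref{sec5}, the equality $\Gin(I)=I^{\sigma_{{\bf t},{\bf 0}}}$ can fail spectacularly for ideals that are merely ${\bf t}$-spread without the strong stability assumption.
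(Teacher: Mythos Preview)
Your reduction is sound: showing $J:=I^{\sigma_{{\bf t},{\bf 0}}}\subseteq\Gin(I)$ and then invoking equality of Hilbert functions is exactly the right framework, and it matches the paper's overall strategy (the paper phrases it as $I\subseteq\Gin(I)^{\sigma_{{\bf 0},{\bf t}}}$, which is the same thing). The gap is in how you propose to obtain the inclusion.

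Your ``natural candidate'' $g\cdot u$ does not work, and not merely because the combinatorics are delicate: it is outright false that the revlex leading term of $g\cdot u$ is $\sigma_{{\bf t},{\bf 0}}(u)$. For a single monomial $u$ of degree $d$ and a generic $g\in\mathrm{GL}_n(K)$, the leading term of $g\cdot u$ is always $x_1^d$; there is no cancellation, since each coefficient in the expansion of $g\cdot u$ is a single product of entries of $g$. (Try $u=x_1x_3$ with ${\bf t}=(2)$: you want $\sigma_{{\bf t},{\bf 0}}(u)=x_1x_2$, but $g\cdot u$ has leading term $x_1^2$.) The element of $g\cdot I$ with leading term $\sigma_{{\bf t},{\bf 0}}(u)$ must instead be a carefully chosen $S$-linear combination of the $g\cdot v$'s, and producing it directly is essentially as hard as computing $\Gin(I)$ outright. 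Your alternative, induction on $|{\bf t}|$, is also problematic: the inductive step would require comparing $\Gin(I)$ with $\Gin(I^{\sigma_{{\bf t},{\bf t}'}})$ for $|{\bf t}'|=|{\bf t}|-1$, and there is no obvious relation between these two generic initial ideals.

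The paper's argument avoids any direct computation of $\Gin(I)$. It proceeds by induction on $\ell=\max\{\max(u):u\in G(I)\}$, assuming without loss of generality $\ell=n$. The auxiliary ideals $I'=I:(x_n^p)$ and $I''=(u\in G(I):\max(u)<n)$ satisfy the inductive hypothesis and give $I''\subseteq\Gin(I)^{\sigma_{{\bf 0},{\bf t}}}\subseteq I'$. The key step then uses the Betti number formula (Corollary~\ref{Cor:BettiNumbFormulaVectSpread}): for $a=n-1-\sum_{j=1}^{\deg(u)-1}t_j$ and $b=a+\deg(u)$, the number $\beta_{a,b}$ counts precisely the generators of degree $\deg(u)$ with $\max=n$. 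Combining this with the general inequality $\beta_{a,b}(I)\le\beta_{a,b}(\Gin(I))=\beta_{a,b}(\Gin(I)^{\sigma_{{\bf 0},{\bf t}}})$ and the sandwich $I''\subseteq\Gin(I)^{\sigma_{{\bf 0},{\bf t}}}\subseteq I'$, one forces the sets of generators with $\max=n$ to coincide, establishing the inclusion.
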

\begin{proof}
	We proceed by induction on the integer $\ell=\max\{\max(u):u\in G(I)\}\ge1$. If $\ell=1$, then $G(I)=\{x_1^a\}$, $I=(x_1^a)$, and $\Gin(I)=I=(x_1^a)$, moreover $\sigma_{{\bf t},{\bf 0}}(x_1^a)=x_1^a$, for some $a\ge1$. So, the thesis holds for $\ell=1$.
	
	Let $\ell>1$. By \cite[Lemma 11.2.8]{JT} we can assume $\ell=n$. So, there exists a monomial $u\in G(I)$ with $\max(u)=n$. Let $p=\max\{p:x_n^p\ \textup{divides}\ w\ \textup{for some}\ w\in G(I)\}$, our hypothesis implies that $p\ge1$. We consider the following ideals:
	\begin{align*}
	I'&=I:(x_n^p),\ \ \ \ I''=\big(u\in G(I):\max(u)<n\big).
	\end{align*} 
	Both are again ${\bf t}$-spread strongly stable ideals, and $I''\subseteq I\subseteq I'$. By inductive hypothesis, $\Gin(I')=(I')^{\sigma_{{\bf t},{\bf 0}}}$ and $\Gin(I'')=(I'')^{\sigma_{{\bf t},{\bf 0}}}$. Equivalently, $$I'=\Gin(I')^{\sigma_{{\bf 0},{\bf t}}}\ \ \textup{and}\ \ I''=\Gin(I'')^{\sigma_{{\bf 0},{\bf t}}}.$$ Therefore, $I''\subseteq\Gin(I)^{\sigma_{{\bf 0},{\bf t}}}\subseteq I'$.
	
	\textsc{Claim 2.} It is
	\begin{equation}\label{eq:claimgins}
	I\subseteq\Gin(I)^{\sigma_{{\bf 0},{\bf t}}}.
	\end{equation}
	
	To prove \textsc{Claim 2}, it is enough to show that each $u\in G(I)$ with $\max(u)=n$ belongs to $\Gin(I)^{\sigma_{{\bf 0},{\bf t}}}$. Indeed, since $I''\subseteq\Gin(I)^{\sigma_{{\bf 0},{\bf t}}}$, all monomials $u\in G(I)$ with $\max(u)<n$ are in $\Gin(I)^{\sigma_{{\bf 0},{\bf t}}}$.
	
	Let $u\in G(I)$ with $\max(u)=n$. We set $a=n-1-\sum_{j=1}^{\deg(u)-1}t_j$ and $b=a+\deg(u)$. By Corollary \ref{Cor:BettiNumbFormulaVectSpread} we have
	\begin{align*}
	\beta_{a,b}(I)\ &=\ \sum_{\substack{v\in G(I)\\ \deg(v)=\deg(u)}}\binom{\max(v)-1-\sum_{j=1}^{\deg(u)-1}t_j}{n-1-\sum_{j=1}^{\deg(u)-1}t_j}\\
	&=\ \big|\big\{v\in G(I):\max(v)=n,\ \deg(v)=\deg(u) \big\}\big|.
	\intertext{Similarly, as $\Gin(I)^{\sigma_{{\bf 0},{\bf t}}}$ is ${\bf t}$-spread strongly stable,}
	\beta_{a,b}(\Gin(I)^{\sigma_{{\bf 0},{\bf t}}})\ &=\ \sum_{\substack{w\in G(\Gin(I)^{\sigma_{{\bf 0},{\bf t}}})\\ \deg(w)=\deg(u)}}\binom{\max(w)-1-\sum_{j=1}^{\deg(u)-1}t_j}{n-1-\sum_{j=1}^{\deg(u)-1}t_j}\\
	&=\ \big|\big\{w\in G(\Gin(I)^{\sigma_{{\bf 0},{\bf t}}}):\max(w)=n,\ \deg(w)=\deg(u) \big\}\big|.
	\end{align*}
	Moreover, by \cite[Corollary 3.3.3]{JT} and by Lemma \ref{lemmaSigmaConserva}, we have
	$$
	\beta_{a,b}(I)\ \le\ \beta_{a,b}(\Gin(I))\ =\ \beta_{a,b}(\Gin(I)^{\sigma_{{\bf 0},{\bf t}}}).
	$$
	Hence
	\begin{equation}\label{setsdisug}
	\begin{aligned}
	\big|\big\{w\in G(\Gin(I)^{\sigma_{{\bf 0},{\bf t}}}):\max(w)=n,\deg(w)=\deg(u) \big\}\big|\ge\\ \ \ \ \ \ \ \big|\big\{v\in G(I):\max(v)=n,\deg(v)=\deg(u) \big\}\big|.
	\end{aligned}
	\end{equation}
	Our aim is to prove that $u\in G(I)$ with $\max(u)=n$ belongs to $\Gin(I)^{\sigma_{{\bf 0},{\bf t}}}$.\smallskip
	
	Let $w_1,\dots,w_s$ be the monomial generators in $G(\Gin(I)^{\sigma_{{\bf 0},{\bf t}}})$ such that $\max(w_i)=n$ and $\deg(w_1)\le\deg(w_2)\le\dots\le\deg(w_s)$. Since $\Gin(I)^{\sigma_{{\bf 0},{\bf t}}}\subseteq I'$, we have $w_i x_n^p\in I$, for all $i=1,\dots,s$. We prove that $w_i\in I$ for all $i$. Since $w_ix_n^p\in I$, there is a monomial $v_i\in G(I)$ such that $v_i$ divides $w_i x_n^p$. We have $\deg(v_i)\le\deg(w_i)+p$, for all $i=1,\dots,s$.\smallskip
	
	If $\deg(v_1)<\deg(w_1)$, setting $u=v_1$ in (\ref{setsdisug}), we would have an absurd. Hence $\deg(v_1)\ge\deg(w_1)$. By finite induction, $\deg(v_i)\ge\deg(w_i)$, for all $i=1,\dots,s$.\smallskip
	
	Now, if $\deg(v_s)\ge\deg(w_s)+1$, setting $u=v_s$ in (\ref{setsdisug}), we would obtain an absurd. Hence, $\deg(v_s)\le\deg(w_s)$, and since we have proved that $\deg(v_s)\ge\deg(w_s)$, we obtain $\deg(v_s)=\deg(w_s)$. Iterating this argument, $\deg(v_i)=\deg(w_i)$, for all $i=1,\dots,s$.\smallskip
	
	If $v_i=(w_i x_n^p)/x_n^p=w_i$ we set $u_i=v_i$ and note that $u_i=w_i$ divides $w_i$. Otherwise, $v_i=(w_i x_n^p)/z_i$ for some monomial $z_i\ne x_n^p$, we note that $v_i$ has bigger sorted indexes than $w_i$, thus since $I$ is ${\bf t}$-spread strongly stable $w_i\in I$. Hence, there is a monomial $u_i\in G(I)$ that divides $w_i$. Finally, we have constructed monomials $u_1,\dots,u_s\in G(I)$ such that $u_i$ divides $w_i$, for all $i=1,\dots,s$. Repeating the same argument as before, using (\ref{setsdisug}), we see that $\deg(u_i)\ge\deg(w_i)$, for all $i$, hence $u_i=w_i$, since $u_i$ divides $w_i$, for all $i=1,\dots,s$.\smallskip
	
	Thus, $w_i=u_i\in G(I)$, for all $i=1,\dots,s$, and we get the inclusion
	$$
	\big\{w\in G(\Gin(I)^{\sigma_{{\bf 0},{\bf t}}}):\max(w)=n\big\}\subseteq\big\{u\in G(I):\max(u)=n\big\}.
	$$
	This equation together with (\ref{setsdisug}) yield
	$$
	\big\{w\in G(\Gin(I)^{\sigma_{{\bf 0},{\bf t}}}):\max(w)=n\big\}=\big\{u\in G(I):\max(u)=n\big\}.
	$$
	Hence, \textsc{Claim 2} is true.\smallskip
	
	Finally, $I$ and $\Gin(I)$ have the same Hilbert function. Moreover, by Lemma \ref{lemmaSigmaConserva}, $\Gin(I)$ and $\Gin(I)^{\sigma_{{\bf 0},{\bf t}}}$ have the same Hilbert function. Hence $I$ and $\Gin(I)^{\sigma_{{\bf 0},{\bf t}}}$ have the same Hilbert function. Formula (\ref{eq:claimgins}) and this observation imply that $I=\Gin(I)^{\sigma_{{\bf 0},{\bf t}}}$, or equivalently $\Gin(I)=I^{\sigma_{{\bf t},{\bf 0}}}$, proving the theorem.
\end{proof}

We end the paper by remarking that the operator $\sigma_{{\bf t},{\bf s}}$ establishes a bijection between ${\bf t}$-spread strongly stable ideals and ${\bf s}$-spread strongly stable ideals.

\end{document}